\documentclass[11pt,reqno]{amsart}
\usepackage{amssymb,mathrsfs,color,mathtools,empheq, verbatim, epstopdf}
\usepackage{pinlabel}
\mathtoolsset{showonlyrefs}
\usepackage{hyperref} 

\usepackage{enumerate, bbm}

\usepackage{graphicx}
\usepackage{tensor}
\usepackage{slashed}
\usepackage{cite}
\usepackage[dvipsnames]{xcolor}

\usepackage[shortlabels]{enumitem}

\usepackage{geometry}

\numberwithin{equation}{section}
\newtheorem{mainthm}{Theorem}
\newtheorem{thm}{Theorem}[section]
\newtheorem{cor}[thm]{Corollary}
\newtheorem{lem}[thm]{Lemma}
\newtheorem{prop}[thm]{Proposition}

\newtheorem{con}{Conjecture}

\theoremstyle{definition}

\theoremstyle{remark}

\def\bC {\mathbb{C}}
\def\bN {\mathbb{N}}

\def\bR {\mathbb{R}}
\def\bS {\mathbb{S}}

\def\cA {\mathcal{A}}
\def\cB {\mathcal{B}}

\def\cH {\mathcal{H}}

\def\cP {\mathcal{P}}

\def\cR {\mathcal{R}}

\newcommand{\la}{\langle}
\newcommand{\ra}{\rangle}

\newcommand{\wto}{\rightharpoonup}

\renewcommand{\div}{\operatorname{div}}

\newcommand{\spn}{\operatorname{span}}
\newcommand{\supp}{\operatorname{supp}}
\newcommand{\Tr}{\operatorname{Tr}}

\newcommand{\dist}{\operatorname{dist}}

\newcommand{\Id}{\operatorname{Id}}

\newcommand{\eee}{\mathrm e}
\newcommand{\iii}{\mathrm i}

\definecolor{deepgreen}{cmyk}{1,0,1,0.5}

\newcommand{\C}{\mathbb{C}}

\newcommand{\R}{\mathbb{R}}

\newcommand{\om}{\omega}
\newcommand{\lam}{\lambda}
\newcommand{\te}{\theta}

\newcommand{\ta}{\tau}

\makeatletter

\newcommand{\Rmnum}[1]{\expandafter\@slowromancap\romannumeral #1@}
\makeatother

\newcommand{\EQ}[1]{\begin{equation}\begin{split} #1 \end{split}\end{equation}}

\newcommand{\Del}[1]{}

\renewcommand\Re{\mathrm{Re}\,}

\definecolor{green}{rgb}{0.3,0.8,0} % Redefines the color green.
\newcommand{\tr}{\textrm{tr}}

\renewcommand{\Re}{\mathrm{Re}}

\newcommand{\ud}{\mathrm{d}}

\newcommand{\0}{\emptyset}

\newcommand{\alp}{\alpha}

\newcommand{\veps}{\varepsilon}

\newcommand{\bbC}{\mathbb C}

\newcommand{\bbN}{\mathbb N}

\newcommand{\bbR}{\mathbb R}
\newcommand{\bbS}{\mathbb S}

\newcommand{\calD}{\mathcal D}

\newcommand{\calN}{\mathcal N}

\newcommand{\calP}{\mathcal P}

\newcommand{\calR}{\mathcal R}

\newcommand{\Ric}{\operatorname{Ric}}
\newcommand{\can}{\mathrm{can}}
\newcommand{\vol}{\operatorname{vol}}
\newcommand{\dvol}{\,\mathrm{dvol}}
\newcommand{\CD}{\operatorname{CD}}

\begin{document}
\title[Spectral obstructions to contracting transport maps on curved spaces]{Spectral obstructions to contracting transport maps on curved spaces}
\author{Shrey Aryan}

\begin{abstract}
Caffarelli's contraction theorem states that the Brenier optimal transport
map from the standard Gaussian measure to a more log-concave probability
measure is $1$-Lipschitz. Motivated by this theorem,
Milman~\cite{milman2018spectral} formulated several conjectures for the
round sphere and for weighted manifolds satisfying the curvature-dimension
condition $\CD(\rho,\infty)$. More recently, Beck and Jerison
\cite{beck2021friedland} raised related questions on the round hemisphere.
A contracting transport map as in these conjectures implies the
corresponding spectral comparison. In the spherical setting, this
comparison was also conjectured by Colding and Minicozzi
\cite{colding1998weyl} for compact manifolds with Ricci curvature lower
bounds. We construct counterexamples to the corresponding spectral
comparisons on spheres in dimensions $d\geq4$ and on weighted manifolds
satisfying the $\CD(1,\infty)$ condition in dimensions $d\geq4$, thereby
obtaining obstructions to contracting transport maps. In dimensions
$d\geq5$, the weighted counterexamples may additionally be chosen to
satisfy $\operatorname{Ric}_g\geq 0$ and $\nabla_g^2V \geq g$ separately. In dimension two, we use inverse mean curvature flow to construct a contracting transport map from the appropriately rescaled round sphere to every closed connected Riemannian surface satisfying the corresponding positive Ricci curvature lower bound. This implies the spectral comparison in dimension two. Together with the recent counterexample in dimension three constructed by Lin, Wang, and Xu
\cite{lin2026eigenvalues} and our counterexamples in dimensions $d\geq4$, this settles the spherical spectral comparison in every dimension $d\geq2$. Using the same method, we also construct a contracting transport from the uniform probability measure on a hemisphere onto the
normalized uniform measure on any geodesically convex subset of positive volume, thereby answering affirmatively the remaining case of a conjecture by Beck and Jerison \cite{beck2021friedland} following the work of Fathi, Fradelizi, Gozlan, and Zugmeyer \cite{fathi2023some}.
\end{abstract}

\keywords{Caffarelli contraction theorem; Lipschitz transport maps}
\subjclass[2010]{53C21 (primary); 58J50, 35P15, 53C20}

\thanks{}

\maketitle
\section{Introduction} 
In \cite{caffarelli2000monotonicity}, Caffarelli proved the following theorem:
\begin{thm}[cf. Theorem 11 \cite{caffarelli2000monotonicity}]\label{thm:caff}
Let $d\geq 1$, and let $\gamma^d$ denote the standard Gaussian measure on $\R^d$. If $\mu(\ud x)=\gamma^d(\ud x)$ and $\nu(\ud x) = e^{-W(x)} \gamma^d(\ud x)$ are two probability measures where $W$ is a convex function, then there exists a convex function $\phi : \mathbb{R}^d \to \mathbb{R}$ such that $T=\nabla \phi$ is $1$-Lipschitz, and transports $\mu$ to $\nu$, i.e. $T_{\#}\mu =\nu.$
\end{thm}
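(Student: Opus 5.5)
We follow Caffarelli's original strategy: take the Brenier map and run a maximum principle argument on the Monge--Ampère equation. By Brenier's theorem there is a convex $\phi$ with $T=\nabla\phi$ pushing $\mu$ to $\nu$. Writing the Gaussian density as $e^{-|x|^2/2}$ up to normalization, $\phi$ solves, at least formally,
\begin{equation}
\det D^2\phi(x)=\exp\Big(-\tfrac{|x|^2}{2}+\tfrac{|\nabla\phi(x)|^2}{2}+W(\nabla\phi(x))\Big).
\end{equation}
The $1$-Lipschitz property of $T$ is the same as $D^2\phi\le \Id$. It therefore suffices to show that for every unit vector $e$ we have $\phi_{ee}\le 1$ everywhere.

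We first regularize. We assume $W$ is smooth and grows at least quadratically. The general case is handled by approximating $W$ by $W_\epsilon$, for instance by adding $\epsilon|x|^2$ and mollifying. Stability of Brenier maps under weak convergence gives $\nabla\phi_\epsilon\to\nabla\phi$ a.e. A uniform bound $D^2\phi_\epsilon\le\Id$ then passes to the limit, since $\phi_\epsilon$ converges locally uniformly and the condition that $\tfrac12|x|^2-\phi$ be convex is closed. For smooth, uniformly log-concave data we invoke Caffarelli's regularity theory, which makes $\phi$ smooth and strictly convex. We also take the target supported on a large ball, truncating if needed, so that $\nabla\phi$ is bounded.

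The core is the linearization. Take logarithms: $\log\det D^2\phi=-\tfrac{|x|^2}{2}+\tfrac{|\nabla\phi|^2}{2}+W(\nabla\phi)$. Differentiate twice in the direction $e$ and use the concavity of $\log\det$. With $L=\phi^{ij}\partial_{ij}$ this gives
\begin{equation}
L\phi_{ee}\ \ge\ -1+|\nabla\phi_e|^2+\nabla\phi_e\cdot\nabla\phi_{e}\cdot(\text{from }|\nabla\phi|^2/2)+\la D^2W\,\nabla\phi_e,\nabla\phi_e\ra+(\nabla\phi+\nabla W)\cdot\nabla\phi_{ee}.
\end{equation}
More carefully, the first-order terms are $(\nabla\phi+\nabla W(\nabla\phi))^k\,\partial_k\phi_{ee}$. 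The zeroth-order quantities are $-1+\sum_k\phi_{ek}^2(1+\text{convex contribution})\ge -1+\phi_{ee}^2$. At an interior maximum $x_0$ of $\phi_{ee}$ over $(x,e)$, choose $e$ to be an eigenvector of $D^2\phi(x_0)$. Then $\nabla\phi_{ee}=0$ and $L\phi_{ee}\le 0$, so $\phi_{ee}^2\le 1$, which gives $\phi_{ee}\le1$.

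The main obstacle is that $\mathbb{R}^d$ is noncompact, so the maximum of $\phi_{ee}$ need not be attained. Caffarelli's fix, which we adopt, is to consider the second difference quotients $\phi(x+he)+\phi(x-he)-2\phi(x)$ instead. We then perturb them by $-\delta|x|^2$, or we work with the target on a bounded convex set, where $\nabla\phi$ is bounded. The Gaussian factor $e^{-|x|^2/2}$ on the source side then forces the difference quotient to be asymptotically controlled at infinity. We apply the same maximum principle argument to the pure second differences, using concavity of $\log\det$ in its discrete form. This yields a bound $\le 1+O(\delta)$, and letting $\delta\to0$ completes the proof.
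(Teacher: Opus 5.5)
\paragraph{Where the proposal stands.} The paper gives no proof of this statement; it quotes it from Caffarelli. Your outline follows Caffarelli's maximum-principle strategy, and the formal core is correct. Set $V(y)=\tfrac12|y|^2+W(y)$ and differentiate $\log\det D^2\phi=-\tfrac12|x|^2+V(\nabla\phi)$ twice in $e$. Discarding $\operatorname{tr}(A^{-1}BA^{-1}B)\ge0$ (with $A=D^2\phi$, $B=D^2\phi_e$) and using $D^2V\ge\Id$ gives $\phi_{ee}\le1$ at an interior maximum. Your first display double-counts $|\nabla\phi_e|^2$, but the corrected form is what you actually use. The gap is in your last paragraph: the passage to an actual proof is asserted, and the devices you name do not give the claimed bound $1+O(\delta)$.

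\paragraph{The $\delta$-perturbation and decay at infinity.} Write $\Delta_h\phi(x):=\phi(x+he)+\phi(x-he)-2\phi(x)$. At a maximizer $x_0$ of $\Delta_h\phi-\delta|x|^2$ you only get $D^2\phi(x_0+he)+D^2\phi(x_0-he)\le2D^2\phi(x_0)+2\delta\Id$. Concavity of $\log\det$ then leaves the error $2\log\det(\Id+\delta D^2\phi(x_0)^{-1})$. For a target in $B_R$ one has $\det D^2\phi(x)\le Ce^{-|x|^2/2}$, and a priori you only know $\delta|x_0|^2\le2Rh$. So this error is not $O(\delta)$. The perturbation is harmless only once you already know that $\Delta_h\phi\to0$ at infinity, and then it is unnecessary.

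That decay is itself only asserted, and the Gaussian weight is not what produces it. For a target with positive density on $B_R$, the recession function of $\phi$ is the support function of $\overline{B_R}$, so $\phi(x)=R|x|+o(|x|)$. Convexity then forces $\nabla\phi(x)-Rx/|x|\to0$, hence $0\le\Delta_h\phi(x)\le h|\nabla\phi(x+he)-\nabla\phi(x-he)|\to0$.

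\paragraph{The sharp constant.} More substantively, the discrete maximum principle at a fixed scale does not give the constant $1$. At a maximizer $x_0$ of $\Delta_h\phi$ one has $\nabla\phi(x_0)=\tfrac12(\nabla\phi(x_0+he)+\nabla\phi(x_0-he))$. Concavity of $\log\det$ and $V(y_+)+V(y_-)-2V(\frac{y_++y_-}{2})\ge\frac14|y_+-y_-|^2$ then yield only $|\nabla\phi(x_0+he)-\nabla\phi(x_0-he)|\le2h$. Convexity converts this into $\Delta_h\phi(x_0)\le h\langle\nabla\phi(x_0+he)-\nabla\phi(x_0-he),e\rangle\le2h^2$, i.e.\ $T$ is $2$-Lipschitz. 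For $\phi=\tfrac12|x|^2$ this conversion loses exactly the factor $2$.

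Sending $h\to0$ along maximizers would repair this only if they stayed in a compact set uniformly in $h$. That is control of $\phi_{ee}$ itself at infinity, which the fixed-$h$ decay does not give.

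One step that does work is an induction over scales. Let $M_s:=\sup_x\Delta_s\phi$, let $x_s$ be a maximizer, and let $g(t):=\langle\nabla\phi(x_s+te)-\nabla\phi(x_s-te),e\rangle$, which is nondecreasing in $t$. Then $M_s=\int_0^sg\le\Delta_{s-h}\phi(x_s)+h\,g(s)\le M_{s-h}+2sh$. Iterating from $M_0=0$ and letting $h\to0$ gives $M_s\le s^2$, i.e.\ $D^2\phi\le\Id$. Without some such step, your proposal proves only a non-sharp Lipschitz bound.
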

% The existence of the transport map $T=\nabla \phi$ follows from solving optimal transport for the quadratic distance cost, i.e.
% \begin{align}
%   T:= \operatorname{argmin}_{\nu = \tilde{T}_{\#} \mu} \int_{\R^d} |x-\tilde{T}(x)|^2 \mu(\ud x)
% \end{align}
% and Brenier's theorem \cite{brenier1991polar}. 
% however from this variation problem it is not clear why the transport map is $1$-Lipschitz. The key insight of Caffarelli was to use the Monge-Ampère equation satisfied by $T=\nabla \phi$,
% \begin{align}
% \det (D^2\phi) = e^{W} 
% \end{align}
% and use the log-concavity of the determinant, log-convexity of $W$ along with a clever maximum principle to deduce that a bound on $|D^2\phi|\leq 1$, implying that $|\nabla T|\leq 1$. In general, as long as the $\nu$ is more log-concave than $\mu$, one can obtain transport maps with dimension independent Lipschitz constants. 
The Gaussian space \((\R^d,|\cdot|,\gamma^d)\) can be regarded as a weighted
manifold with synthetic dimension \(\infty\) and constant Bakry--\'Emery
curvature \(1\). In particular, it satisfies the curvature-dimension $\operatorname{CD}(1,\infty)$ condition (cf. Theorem 0.12 in \cite{lott2009ricci}) and therefore one can view Theorem~\ref{thm:caff} as a statement that asks for contracting volume-preserving maps from the Gaussian space to a possibly more curved space (cf. p.~514 in \cite{villani2009optimal}). In \cite{milman2018spectral}, Milman formulated precise conjectures to generalize this observation on finite and infinite dimensional spaces:
\begin{con}[cf. Conjecture $3$ \cite{milman2018spectral}]\label{conj:sphere-spectral}
For any $(\bS^d,g,\vol_g)$ satisfying $\operatorname{Ric}_g\geq \rho g$ with $\rho>0$, we have
\begin{align}
        \lambda_k(\bS^d,g,\vol_g)
        \geq
        \lambda_k(\bS^d,g^\rho_{\can},\vol_{g^\rho_{\can}})
        \qquad
        \forall k\geq 1,
\end{align}
where $g^\rho_{\can}$ denotes the canonical metric on $\bS^d$ rescaled to have
\begin{align}
        \operatorname{Ric}_{g^\rho_{\can}}
        =
        \rho g^\rho_{\can},
\end{align}
where $g^\rho_{\can}$ denotes the rescaled canonical metric satisfying
\begin{align}
        \Ric_{g^\rho_{\can}}
        =
        \rho g^\rho_{\can}.
\end{align}
Equivalently,
\begin{align}
        g^\rho_{\can}
        =
        \frac{d-1}{\rho}g_{\can}.
\end{align}
\end{con}
For $\rho>0$, let
\begin{align}
        \gamma_\rho^d(\ud x)
        :=
        \left(
        \frac{\rho}{2\pi}
        \right)^{d/2}
        e^{-\rho|x|^2/2}\,\ud x.
\end{align}
\begin{con}[cf. Conjecture $4$ \cite{milman2018spectral}]\label{conj:sphere-transport}
For any $(\bS^d,g,\vol_g)$ satisfying $\operatorname{Ric}_g\geq \rho g$ with $\rho>0$, there exists a map
\begin{align}
        T:(\bS^d,g^\rho_{\can},\vol_{g^\rho_{\can}})
        \to
        (\bS^d,g,\vol_g),
\end{align}
pushing forward $\vol_{g^\rho_{\can}}$ onto $\vol_g$ up to a finite constant and contracting the corresponding metrics.
\end{con}
\begin{con}[cf. Conjecture $1^*$ \cite{milman2018spectral}]\label{conj:gaussian-spectral}
For any $(\bR^d,g,\mu)$ satisfying $\operatorname{CD}(\rho,\infty)$ with $\rho>0$, we have
\begin{align}
        \lambda_k(\bR^d,g,\mu)
        \geq
        \lambda_k(\bR^d,|\cdot|,\gamma_\rho^d)
        \qquad
        \forall k\geq 1.
\end{align}
\end{con}

\begin{con}[cf. Conjecture $2^*$ \cite{milman2018spectral}]\label{conj:gaussian-transport}
For any $(\bR^d,g,\mu)$ satisfying $\operatorname{CD}(\rho,\infty)$ with $\rho>0$, there exists a map
\begin{align}
        T:(\bR^d,|\cdot|,\gamma_\rho^d)
        \to
        (\bR^d,g,\mu),
\end{align}
pushing forward $\gamma_\rho^d$ onto $\mu$ up to a finite constant and contracting the corresponding metrics.
\end{con}
\begin{con}[cf. Conjectures $1$ and $2$ \cite{milman2018spectral}]\label{conj:gaussian-combined}
Conjectures~\ref{conj:gaussian-spectral} and
\ref{conj:gaussian-transport} hold when restricted to weighted manifolds diffeomorphic to $\R^d$ and satisfying
\begin{align}
        \Ric_g
        \geq
        0,
        \qquad
        \nabla_g^2V
        \geq
        \rho g.
\end{align}
Here
\begin{align}
        \mu
        =
        Z^{-1}e^{-V}\dvol_g,
\end{align}
where $V:\R^d\to\R$ is smooth and $Z$ is a normalizing constant.
\end{con}
After rescaling the Ricci curvature lower bound, Conjecture
\ref{conj:sphere-spectral} is a subcase of a long-standing
eigenvalue comparison problem stated by Colding and Minicozzi in \cite[p.~261]{colding1998weyl}. In particular, they asked whether every closed Riemannian manifold $(N^d,h)$ satisfying $\operatorname{Ric}_h\geq(d-1)h$ also satisfies
\begin{align}\label{eqn:CM-long-standing-problem}
        \lambda_k(N^d,h)
        \geq
        \lambda_k(\mathbb S^d,g_{\can})
        \qquad
        \forall k\geq1,
\end{align}
where the eigenvalues are counted with multiplicity. They also asked whether equality in \eqref{eqn:CM-long-standing-problem} implies that $(N^d,h)$ is isometric to the round sphere. Colding and Minicozzi observed that this problem is closely related, through the metric cone, to Yau's conjecture asking whether Euclidean space has the maximal dimension of polynomial growth harmonic functions among complete manifolds with nonnegative Ricci curvature, see \cite[Problem 48]{yau1992open}.

The main results of this work show that all the above conjectures are false in general when the dimension is large enough.
% \begin{mainthm}\label{thm:sphere-main}
% Let \(d\geq 4\). Then there exists a smooth Riemannian metric \(g\) on
% \(\mathbb S^d\), a number \(\rho>0\), and an integer \(K\geq 1\) such that
% \begin{align}
% \operatorname{Ric}_{g}\geq \rho g    
% \end{align}
% but
% \begin{align}\label{eqn:index-K}
% \lambda_K(\mathbb S^d,g) < \lambda_K(\mathbb S^d,g_{\mathrm{can}}^\rho).    
% \end{align}
% \end{mainthm}
\begin{mainthm}\label{thm:sphere-main}
Let $d\geq4$. Then there exist an integer $K\geq1$, a number
$\veps_d>0$, and a smooth family of Riemannian metrics
$\{\widehat g_\veps\}_{0\leq\veps<\veps_d}$ on $\bbS^d$ such that
$\widehat g_0=g_\can$ and
\begin{align}\label{eqn:sphere-local-convergence}
        \widehat g_\veps
        \rightarrow
        g_\can
        \qquad
        \text{in }C^\infty
        \text{ as }\veps\to0.
\end{align}
Moreover, for every $0<\veps<\veps_d$, we have
\begin{align}\label{eqn:sphere-local-Ricci}
        \Ric_{\widehat g_\veps}
        \geq
        (d-1)\widehat g_\veps,
\end{align}
but
\begin{align}\label{eqn:index-K}
        \lam_K(\bbS^d,\widehat g_\veps)
        <
        \lam_K(\bbS^d,g_\can).
\end{align}
\end{mainthm}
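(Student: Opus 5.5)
We will construct $\widehat g_\veps$ as a perturbation of the round metric, $\widehat g_\veps = g_\can + \veps h + O(\veps^2)$, and use first-order perturbation theory for the eigenvalue cluster. On $(\bbS^d,g_\can)$ the $\ell$-th eigenvalue $\ell(\ell+d-1)$ has a large multiplicity $m_\ell$. We choose $K$ as the last index in a cluster, i.e. the largest index with $\lam_K(g_\can)=\ell(\ell+d-1)$ for a suitable $\ell$ (we first try $\ell=1$ or $\ell=2$). Under a smooth perturbation, the $m_\ell$ eigenvalues in the cluster split to first order according to the eigenvalues of a symmetric quadratic form $Q_h$ on the eigenspace $E_\ell$ of spherical harmonics:
\[
Q_h(u,u)=\int_{\bbS^d}\Bigl(\tfrac12(\tr h)\,|\nabla u|^2-\la h,\nabla u\otimes\nabla u\ra-\tfrac{\lam}{2}(\tr h)\,u^2\Bigr)\dvol_{g_\can},
\]
with $\lam=\ell(\ell+d-1)$. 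Then $\lam_K(\widehat g_\veps)=\lam+\veps\max\operatorname{spec}(Q_h)+O(\veps^2)$. It therefore suffices to find $h$ for which $Q_h$ is negative definite on $E_\ell$, while the Ricci inequality \eqref{eqn:sphere-local-Ricci} holds for all small $\veps$.

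The Ricci constraint is the delicate part. The round metric saturates $\Ric=(d-1)g$, so to first order we need $D\Ric(h)-(d-1)h\geq0$ pointwise. We would restrict to conformal deformations $h=2fg_\can$, or more generally to warped/doubly warped (cohomogeneity one) deformations $g=dr^2+a(r)^2g_{\bbS^{p}}+b(r)^2g_{\bbS^{q}}$ with $p+q=d-1$. These reduce the Ricci condition to ODE inequalities. For a conformal change, the linearized condition reads
\[
-(d-2)\nabla^2 f-(\Delta f)g-2(d-1)fg\geq0.
\]
Since first-order eigenfunctions $f$ give equality, we choose $f$ as a combination of low harmonics that leaves slack (strict inequality) everywhere, and then absorb the $O(\veps^2)$ terms. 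Alternatively, we take a second-order ansatz $\veps h_1+\veps^2h_2$ with $h_1$ lying in the kernel of the linearized constraint, so that the second-order Ricci term is positive. This is where $d\geq4$ should enter: we expect the slack from $(d-2)\nabla^2f$ and the cross-curvature terms of the $\bbS^p\times\bbS^q$ splitting (with $p,q\geq2$ possible only when $d-1\geq3$) to permit a deformation that is Ricci-admissible yet lowers the top of the cluster.

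Next we compute $Q_h$ explicitly on $E_\ell$ for the chosen symmetric ansatz. Using $O(p+1)\times O(q+1)$ invariance, $E_\ell$ decomposes into a few isotypic blocks, and $Q_h$ is scalar on each block. This reduces negativity to finitely many one-dimensional integrals in $r$ against $a,b$. We will verify that every block value is negative for a suitable profile; a natural choice is to shrink one sphere factor slightly, in the spirit of the Lin--Wang--Xu three-dimensional example.

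The main obstacle is the compatibility between the two requirements. Lichnerowicz--Obata force $\lam_1\geq d$, and volume considerations tend to push eigenvalues up, so in the first cluster negativity of every block may fail. We therefore first try a higher cluster, for example $\ell=2$, and if necessary use the freedom to choose the second-order correction $h_2$. If $Q_h$ turns out to be only semidefinite, we pass to second-order perturbation theory.
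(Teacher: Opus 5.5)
There is a genuine gap, and it lies in the choice of index. Taking $K$ to be the \emph{last} index of the degree-$\ell$ cluster forces you to make $Q_h$ negative definite on $E_\ell$. No Ricci-admissible deformation can do that, in any cluster. Take an $L^2(g_\can)$-orthonormal basis $u_1,\dots,u_{m_\ell}$ of $E_\ell$ and set $V=\vol(\bbS^d,g_\can)$. The addition theorem gives $\sum_i u_i^2=m_\ell/V$ and $\sum_i\nabla u_i\otimes\nabla u_i=\frac{\lam m_\ell}{dV}\,g_\can$. Substituting into your formula yields
\[
\sum_{i=1}^{m_\ell}Q_h(u_i,u_i)=-\frac{\lam m_\ell}{dV}\int_{\bbS^d}\operatorname{tr}h\,\dvol_{g_\can}.
\]
Bishop--Gromov applied to $\Ric_{\widehat g_\veps}\geq(d-1)\widehat g_\veps$ gives $\vol(\widehat g_\veps)\leq V=\vol(\widehat g_0)$ for all small $\veps\geq0$. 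Hence $\int\operatorname{tr}h\leq0$ and $\operatorname{tr}Q_h\geq0$.

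So $\max\operatorname{spec}Q_h\geq0$ always, and if $Q_h\leq0$ then $Q_h\equiv0$. Your second-order fallback would therefore start from a completely degenerate first-order picture, with nothing specified about how to proceed. The case $\ell=1$ is excluded even more directly: every index of that cluster is $\geq\lam_2\geq d$ by Lichnerowicz.

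The missing idea is to take $K$ to be the \emph{first} index of the cluster, $K=\dim(H_0\oplus\cdots\oplus H_{k-1})+1$. Then a single direction of decrease suffices, while the rest of the cluster is allowed (indeed forced, by the trace identity) to move up. The paper does exactly this. It uses a cohomogeneity-one metric of the kind you mention, but with the circle factor \emph{enlarged}: $g_\veps=g_\can+\veps|z|^2\alpha\otimes\alpha$, where $\alpha=x_1\,dx_2-x_2\,dx_1$.

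A direct warped-product computation gives $\Ric_{g_\veps}\geq(d-1-\frac{25}{12}\veps)g_\veps$ for all small $\veps$, not only to first order. The constant rescaling by $c_\veps=1-\frac{25\veps}{12(d-1)}$ restores the bound $d-1$, at the cost of multiplying every eigenvalue by $c_\veps^{-1}$. In your language, this is adding $-\frac{25}{12(d-1)}g_\can$ to $h$ to create slack.

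The test function $f_k=\Re(z^k)$ is orthogonal, both in $L^2$ and in energy, to all harmonics of degree $<k$, by the $S^1$-symmetry. Its Rayleigh quotient drops by the relative amount $\beta_{d,k}\veps$, with $\beta_{d,k}\to1$ as $k\to\infty$. So $d\geq4$ enters through $\frac{25}{12(d-1)}<1$, which requires a high cluster $k\gg1$ rather than $\ell=1,2$. It does not enter through having two sphere factors of dimension at least two.

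Apart from the index issue, your proposal never commits to a concrete $h$, nor checks either the Ricci inequality or the sign of any quadratic form. The substantive part of the argument is therefore still absent.
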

Thus, Theorem~\ref{thm:sphere-main} gives a negative answer to
Conjecture~\ref{conj:sphere-spectral} and to the eigenvalue comparison problem of Colding--Minicozzi in dimensions $d\geq4$. After the first version of this work appeared, Lin, Wang, and Xu \cite{lin2026eigenvalues} proved Conjecture~\ref{conj:sphere-spectral} when $d=2$ and constructed a counterexample on $\bbS^3$. Therefore, together with their work, Theorem~\ref{thm:sphere-main} settles the ordered eigenvalue comparison on spheres.
In contrast, we prove that Conjecture~\ref{conj:sphere-transport}
holds when $d=2$.
\begin{thm}\label{thm:sphere-transport-dimension-two}
Let $(N^2,g)$ be a closed connected Riemannian surface satisfying
\begin{align}\label{eq:sphere-transport-dimension-two-Ricci}
        \Ric_g
        \geq
        \rho g
\end{align}
for some $\rho>0$. Then there exists a continuous surjective map
\begin{align}
        T:
        (\bS^2,g^\rho_{\can})
        \to
        (N^2,g)
\end{align}
such that
\begin{align}\label{eq:sphere-transport-dimension-two-contraction}
        d_g(T(x),T(y))
        \leq
        d_{g^\rho_{\can}}(x,y)
        \qquad
        \text{for every }x,y\in\bS^2,
\end{align}
and
\begin{align}\label{eq:sphere-transport-dimension-two-pushforward}
        T_\#\dvol_{g^\rho_{\can}}
        =
        \frac{4\pi}{\rho\,\vol_g(N^2)}
        \dvol_g.
\end{align}
Here $g^\rho_{\can}=\rho^{-1}g_{\can}$.
\end{thm}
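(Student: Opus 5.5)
The plan is to build $T$ by matching level sets. On the source side, use the concentric geodesic circles of $(\bS^2,g^\rho_{\can})$ around a pole. On the target side, use the level curves of a (weak, Huisken--Ilmanen) inverse mean curvature flow on $N^2$ starting from a point $p\in N$. After scaling we may assume $\rho=1$, so $\vol(\bS^2,g^1_{\can})=4\pi$, and Bishop's inequality gives $V:=\vol_g(N)\le 4\pi$.

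\textbf{Construction of $T$.}
\begin{enumerate}
\item Let $u$ be the weak IMCF solution with $u\to-\infty$ at $p$, and set $\Omega_t=\{u<t\}$, $\Sigma_t=\partial\Omega_t$, $A(t)=|\Omega_t|$, $L(t)=|\Sigma_t|$.
\item For each $s\in(0,1)$, the spherical cap of area $4\pi s$ is sent to the sublevel set $\Omega_{t(s)}$ with $A(t(s))=sV$. This matching makes the pushforward identity \eqref{eq:sphere-transport-dimension-two-pushforward} automatic at the level of enclosed areas.
\item On each circle, $T$ maps onto the matched curve by a constant-speed parametrization. The angular variable on each curve is fixed by a transversal family of curves, for instance the flow lines of $\nabla u/|\nabla u|^2$, chosen so that the Jacobian factorizes.
\item To make the density exactly constant, choose the angular parametrization so that area is distributed proportionally; in the flow coordinates this is a one-variable rearrangement on each level curve.
\end{enumerate}

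\textbf{Curve estimates.} Along smooth IMCF of a curve with normal speed $1/k$, we have $L'=L$, and by Cauchy--Schwarz
\begin{align}
A'=\int_{\Sigma_t}k^{-1}\ge \frac{L^2}{\int_{\Sigma_t}k}.
\end{align}
Gauss--Bonnet together with $K\ge1$ gives $\int_{\Sigma_t}k=2\pi-\int_{\Omega_t}K\le 2\pi-A$. In the weak formulation the same inequalities hold across the jump times, where $\Omega_t$ is replaced by its strictly minimizing hull. The minimizing hull only increases $A$ and does not increase $L$, so it is harmless for these estimates.

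These two inequalities give an ODE comparison with the model sphere, where everything is an equality:
\begin{align}
L_0^2=4\pi a-a^2,\qquad A_0'=\frac{L_0^2}{2\pi-A_0}.
\end{align}
Expressed in terms of the area fraction $s$, the key estimate to aim for is
\begin{align}
\frac{L(t(s))}{L_0(4\pi s)}\le 1 .
\end{align}
This gives \emph{tangential contraction}. The argument is a Geroch/Hawking-mass type monotonicity: the quantity $L^2-4\pi A+A^2$ is nonincreasing and tends to $0$ at the point $p$. Combined with $V\le 4\pi$, this should control $L$ in terms of $sV$ and compare it with the model curve at area $4\pi s$.

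\textbf{Normal contraction.} The distance between nearby matched curves in $N$ is at most $\int k^{-1}\,\mathrm{d}t$ along a flow line. On the sphere, the corresponding spacing is $\mathrm{d}r$, where $\mathrm{d}(4\pi s)=L_0\,\mathrm{d}r$. Since $\mathrm{d}(sV)=A'\,\mathrm{d}t$, the pointwise normal stretch is $\frac{1}{k}\cdot\frac{L_0}{A'}\cdot\frac{V}{4\pi}$. This is not automatically at most $1$ pointwise, because $k$ varies along $\Sigma_t$.

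The remedy is to use the freedom in the angular parametrization together with the pushforward constraint. Near a point where $k$ is small, the level curve has large area density, so equal-area angular sectors are short there. I would check that the full differential of $T$, which is lower triangular in (normal, tangential) coordinates, has operator norm at most $1$. Its determinant equals $V/4\pi\le1$, and its diagonal entries are controlled by the two estimates above.

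\textbf{Main obstacle and completion.} I expect this normal/shear control to be the main difficulty. If a pointwise bound fails, the first fallback is to give up the fixed flow-line coordinates and define $T$ on each circle as the optimal monotone (one-dimensional) transport to the level curve. A second fallback is to regularize by approximating with smooth-strictly-convex-level-set data and pass to the limit. Lipschitz bounds are preserved under uniform limits, so it suffices to handle a regularized problem and then extract a continuous limit $T$ by Arzel\`a--Ascoli. Surjectivity follows because the image of a $1$-Lipschitz map that pushes a full-mass measure onto $\dvol_g$ is closed and of full measure, hence all of $N$.
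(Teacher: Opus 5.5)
There is a genuine gap, and it sits exactly at the step you flag as the main obstacle. None of your proposed remedies can close it. Once $T$ sends the geodesic circle of area fraction $s$ onto the level curve $\Sigma_{t(s)}$, you no longer get to choose the normal part of $dT$. From $u(T(r,\theta))=t(r)$ and $\nabla u=k\nu$ you get, in orthonormal frames, $dT(e_r)=a\nu+b\tau$ and $dT(e_\theta)=c\tau$ with $a(x)=t'(r)/k(x)$, whatever angular parametrization you use. The pushforward identity forces $ac=V/4\pi=:v$, so $c=v/a$. Then $|dT|_{\mathrm{op}}\le1$ requires $v\le t'(r)/k(x)\le1$ at every $x\in\Sigma_t$, i.e.\ the pointwise pinching $\max_{\Sigma_t}k\le v^{-1}\min_{\Sigma_t}k$ on every level curve, plus control of the shear $b$.

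Your inputs $L^2\le4\pi A-A^2$ and $\int_{\Sigma_t}k\le2\pi-A$ are integral bounds along $\Sigma_t$ and give nothing pointwise. The pinching fails as soon as $k$ degenerates non-uniformly along a level curve, which is what one expects near the singular time for non-round $N$. Re-choosing the angle, using a monotone rearrangement on each circle, or regularizing only redistributes $b$ and $c$; none of them changes $a$.

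The construction is also incomplete upstream. Smooth IMCF of curves from $p$ dies in finite time, since $L=Ce^t$ and $A'\ge L^2/(2\pi)$; on the round sphere it stops at the equator. The Huisken--Ilmanen weak flow is degenerate on a closed surface, because the strictly minimizing hull of any subset of positive area is all of $N$. So $s\mapsto\Omega_{t(s)}$ is undefined on an interval of area fractions, and you never say what $T$ does there. Even where the flow exists, $L^2\le4\pi sV-(sV)^2$ gives $L\le L_0(4\pi s)$ only for $s\le 1/(1+v)$.

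The missing idea is to flow the surface itself, extrinsically, rather than curves inside it. The paper proceeds as follows:
\begin{itemize}
\item It passes to the orientable double cover when $N\cong\mathbb{RP}^2$ and first assumes $K>1$.
\item It isometrically embeds $(\widetilde N,\widetilde g)$ as a strictly convex surface in $\bS^3$, using the Weyl problem in $\bS^3$ plus Dubrovin's regularity.
\item It runs Gerhardt's expanding flow $\partial_tX_t=\frac{2}{H_t}\nu_t$, which converges to a totally geodesic unit $2$-sphere.
\end{itemize}
Along this flow $\partial_tg_t=\frac{4}{H_t}h_t\ge0$ and $\dvol_{g_t}=e^{2t}\dvol_{\widetilde g}$. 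Hence the inverse flow map from the nearly equatorial graph back to $\widetilde N$ is $1$-Lipschitz with exactly constant Jacobian at every point. The pointwise control your level-set matching lacks comes for free from the monotonicity of the induced metric. The proof is finished by an Arzel\`a--Ascoli limit as $t\to T_*$, composition with the covering, and the approximation $g_\veps=(1-\veps)g$ to pass from $K>1$ to $K\ge1$. Surjectivity then follows from the pushforward identity, as in your last step.
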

As a consequence, Conjecture~\ref{conj:sphere-spectral} holds when $d=2$. In the case in which $N^2$ is diffeomorphic to $\bS^2$, this result was also obtained at the same time independently by Han and Zhu \cite{han-zhu}, using a related inverse mean curvature flow argument.

Using the same idea, we also answer affirmatively the remaining question posed by Fathi, Fradelizi, Gozlan, and Zugmeyer \cite{fathi2023some}, arising from a question of Beck and Jerison \cite{beck2021friedland}.
\begin{thm}\label{thm:hemisphere-transport}
Let $d\geq2$, let $\bS^d_+$ be a closed hemisphere of the canonical round
sphere $(\bS^d,g_{\can})$, and let
\begin{align}\label{eq:hemisphere-transport-sigma-plus}
        \sigma_+
        :=
        \frac{1}{\vol_{g_{\can}}(\bS_+^d)}
        \mathbf{1}_{\bS^d_+}\dvol_{g_{\can}}
\end{align}
be the uniform probability measure on $\bS^d_+$. Let
$K\subset\bS^d_+$ be a closed geodesically convex subset satisfying $\sigma_+(K)>0$. Then there exists a continuous surjective map
\begin{align}
        T:\bS^d_+\to K
\end{align}
such that
\begin{align}\label{eq:hemisphere-transport-contraction}
        d_{g_{\can}}(T(x),T(y)) \leq d_{g_{\can}}(x,y)
        \quad
        \text{for every }x,y\in\bS^d_+,
\end{align}
and
\begin{align}\label{eq:hemisphere-transport-pushforward}
        T_\#\sigma_+=\frac{1}{\sigma_+(K)}\mathbf{1}_K\sigma_+.
\end{align}
\end{thm}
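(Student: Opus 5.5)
We build $T$ by a radial (level-set) construction adapted to $K$, in the spirit of the two-dimensional inverse mean curvature flow argument announced for Theorem~\ref{thm:sphere-transport-dimension-two}. If $K$ has empty interior then $\sigma_+(K)=0$, so $K$ has nonempty interior. We foliate $K$ by a one-parameter family of nested convex sets $K_t\subset K$, $t\in[0,1]$, with $K_1=K$ and $K_0$ a single point or a lower-dimensional convex set. We foliate the hemisphere $\bS^d_+$ by the concentric closed geodesic balls $B_s$ about its pole, $s\in[0,\pi/2]$. We then match these by volume: $s(t)$ is defined by $\sigma_+(B_{s(t)})=\sigma_+(K_t)/\sigma_+(K)$. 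On each sphere $\partial B_s$ we map to $\partial K_{t(s)}$ so that the pushforward of the normalized measure is the correct conditional measure on the leaf.

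The simplest concrete choice for the leaves is not obviously adequate. Taking $K_t$ to be sublevel sets of the distance to a point $p$ in the interior of $K$, intersected with $K$, does not give star-shaped level sets of controlled geometry. We therefore choose the leaves to be the level sets of an inverse mean curvature flow run inward, or equivalently backward from $\partial K$. For a convex domain in the round hemisphere, the inward mean-convex flow (or a smooth approximation of $K$) preserves convexity and shrinks to a point. We parametrize the flow so that the area of the leaf satisfies the same ODE as geodesic spheres in $\bS^d_+$, namely $|\partial K_t|\propto$ the area of $\partial B_{s}$, up to the scaling by $\sigma_+(K)$.

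The map $T$ is then defined in two parts. The normal direction of $\bS^d_+$ is sent along flow lines, which are orthogonal to the leaves. On the leaves, we send the leaf $\partial B_s$ to $\partial K_t$ by a map determined by the flow, and we show it is contracting. Measure preservation reduces to an identity between the Jacobian in the radial direction and the tangential area ratio, which holds by the choice of parametrization together with the coarea formula. We handle general closed convex $K$, possibly with nonsmooth boundary, by approximating with smooth strictly convex $K_j$. The resulting maps $T_j$ are uniformly $1$-Lipschitz, so Arzel\`a--Ascoli gives a limit $T$. The contraction inequality and the pushforward identity pass to the limit, and surjectivity follows from compactness of the image together with full support of the pushforward.

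\textbf{Main obstacle.} The hardest step is the Lipschitz estimate, which has two parts. In the normal direction we need the speed of the flow in $K$ to be at most the radial speed in the hemisphere. This should follow from a Heintze--Karcher or Bishop--Gromov type comparison: the leaves $\partial K_t$ are convex in a space of curvature $1$, so their mean curvature is at least that of the geodesic sphere with the same enclosed volume. This is a L\'evy--Gromov / isoperimetric comparison on the hemisphere, which is available because convex subsets of $\bS^d_+$ inherit the hemisphere's isoperimetric profile. In the tangential directions we need the leaf-to-leaf map to be contracting, and for $d\ge3$ this is genuinely harder than in the surface case. Our first attempt is to compose the radial projection from an interior point with the flow, using the convexity and Gauss equation to control the second fundamental form from below. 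If that fails, we fall back to a Kim--Milman heat-flow (Langevin) transport: we run the reverse heat flow interpolating from $\mathbf{1}_K\sigma_+$ to $\sigma_+$, and we derive contraction from the $\CD(d-1,d)$ curvature of the convex set via a Bochner argument with Neumann boundary. Convexity of $K$ makes the boundary term nonnegative.
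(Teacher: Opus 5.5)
Your outer framework is sound and matches how the paper concludes: approximate $K$ by smooth strictly convex sets, get uniformly $1$-Lipschitz maps, pass to the limit with Arzel\`a--Ascoli, recover the pushforward by weak convergence, and get surjectivity from full support. The gap is the core step. For smooth $K$, your leaf-by-leaf construction inside $\bS^d$ contains no mechanism that produces a $1$-Lipschitz, measure-preserving map, and each justification you give fails.

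(i) IMCF is an expanding flow. Running it inward from $\partial K$ is backward IMCF, which is ill-posed, so the foliation you describe is not available.

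(ii) The pushforward identity does not follow from the choice of $t(s)$. Write $\partial K_t=\{\tau=t\}$. By the coarea formula, the conditional measure of $\mathbf 1_K\dvol_{g_\can}$ on $\partial K_t$ is $|\nabla\tau|^{-1}\,dA$, which for IMCF is $H^{-1}\,dA$. IMCF rescales $dA$ uniformly but not $H^{-1}\,dA$. So if the leaf maps are determined by the flow, the Jacobian identity would force $\partial_t\log H$ to be constant on every leaf; a single scalar reparametrization $t(s)$ can only match the total masses of the leaves. If you instead insert leafwise transports that vary with $s$, then $\partial_sT$ acquires tangential components, and you lose the splitting of the Lipschitz bound into normal and tangential parts.

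(iii) The normal estimate rests on a false pointwise claim. A convex leaf need not have mean curvature at least that of the geodesic sphere enclosing the same volume. Smoothings of the lune $\{x\in\bS^d_+:x_1\geq0\}$ have nearly totally geodesic pieces with $H\approx0$, while a geodesic sphere of radius $r<\pi/2$ has $H=(d-1)\cot r>0$. L\'evy--Gromov and Heintze--Karcher only control integrals over a leaf, whereas you need $\frac{dt}{ds}\,|\nabla\tau|^{-1}\leq1$ at every point of $\partial K_{t(s)}$.

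The tangential estimate is left entirely open. The Kim--Milman fallback controls the Lipschitz constant through $\nabla^2\log P_t(\mathbf 1_K)\leq0$, i.e.\ preservation of log-concavity under the Neumann heat flow on $\bS^d_+$. You have no way to establish this on a curved space. A Bochner argument with convex boundary gives gradient and spectral-gap bounds, not a Lipschitz bound for the flow map.

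You are right that IMCF is the tool, but you apply it to the wrong object. The missing idea is to go up one dimension, so that the $d$-dimensional volume of $K$ becomes the area of a hypersurface. Area is exactly the quantity IMCF rescales uniformly.

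The paper regards $K\subset\bS^d=\bS^{d+1}\cap\{z=0\}$ as a limit of smooth strictly convex ``pancakes'' $\Sigma_j\subset\bS^{d+1}$, invariant under $R(x,z)=(x,-z)$. Their upper halves satisfy $\Sigma_j^+=\Sigma_j\cap\{z\geq0\}\to K$ in Hausdorff distance, and $\dvol_{g_{\Sigma_j}}|_{\Sigma_j^+}\wto\mathbf 1_K\dvol_{g_\can}$. It then runs Gerhardt's expanding flow $\partial_tX_t=\frac{d}{H_t}\nu_t$ from $\Sigma_j$ out to a great sphere $E\cong\bS^d$.

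Strict convexity gives $\partial_tg_t=\frac{2d}{H_t}h_t\geq0$ and $\dvol_{g_t}=e^{dt}\dvol_{g_\Sigma}$. Let $Y_t$ be the graph parametrization over $E$. Then the maps $X_t^{-1}\circ Y_t:E\to\Sigma_j$ satisfy $(X_t^{-1}\circ Y_t)^*g_\Sigma\leq\widehat g_t\to g_E$, and they push $\dvol_{\widehat g_t}$ forward to $e^{dt}\dvol_{g_\Sigma}$. So both the $1$-Lipschitz property and the pushforward identity come for free, with no comparison geometry.

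Finally, the flow is $R$-equivariant, because the extinction point of the polar flow lies in $\{z=0\}$. This restricts the maps to a hemisphere $E^+\to\Sigma_j^+$, and only then does your limiting argument take over.
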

We note that the transport map constructed above need not coincide with
the optimal transport map for the quadratic distance cost. Indeed, take
\begin{align}
        \bS^2_+
        :=
        \left\{
        (x_1,x_2,x_3)\in\bS^2:
        x_3\geq0
        \right\},
        \quad 
        K
        :=
        \left\{
        (x_1,x_2,x_3)\in\bS^2:
        x_3\geq0,\ x_1\geq0
        \right\}.
\end{align}
In geodesic polar coordinates $(\theta,\varphi)$ centered at
$e_1=(1,0,0)$, the quadratic optimal transport map from $\sigma_+$ onto
the normalized uniform measure on $K$ is given $\sigma_+$-almost
everywhere by
\begin{align}
        T_{\mathrm{opt}}(\theta,\varphi)
        =
        (r(\theta),\varphi),
        \qquad
        \cos r(\theta)
        =
        \frac{1+\cos\theta}{2}.
\end{align}
Indeed, since $x\mapsto d_{g_{\can}}(e_1,x)$ is $1$-Lipschitz, every
transport plan has quadratic cost at least the one-dimensional quadratic
transport cost between the radial marginals, and $T_{\mathrm{opt}}$
attains this lower bound because $r$ is their monotone rearrangement and
$T_{\mathrm{opt}}$ moves each point along the same radial geodesic emanating from $e_1$ and ending at $-e_1.$ Moreover,
if $ e_\varphi :=\frac{1}{\sin\theta}\partial_\varphi,$ then the operator norm of the differential satisfies
\begin{align}
        \left|
        d(T_{\mathrm{opt}})_{(\theta,\varphi)}
        \right|_{\mathrm{op}}
        \geq
        \left|
        d(T_{\mathrm{opt}})_{(\theta,\varphi)}
        (e_\varphi)
        \right|_{g_{\can}}
        =
        \frac{\sin r(\theta)}{\sin\theta}
        \rightarrow
        \infty
\end{align}
as $\theta\to\pi$. Thus $T_{\mathrm{opt}}$ is not Lipschitz.

Next, in the $\CD(\rho,\infty)$ setting, we prove the following result.
\begin{mainthm}\label{thm:gaussian-main}
For every $d\geq4$, there exists a smooth complete weighted manifold $(\bR^d,g,\mu)$ satisfying the $\CD(1,\infty)$ condition and $\Ric_g\geq0$ such that
\begin{align}\label{eqn:index-gaussian-K}
\lambda_{d+2}(\bR^d,g,\mu)<2=
\lambda_{d+2}(\bR^d,|\cdot|,
\gamma^d).    
\end{align}
\end{mainthm}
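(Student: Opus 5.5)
The plan is to perturb the Gaussian space $(\bR^d,|\cdot|,\gamma^d)$ and track the eigenvalue cluster at $2$ by first-order perturbation theory. First fix the indexing: the Ornstein--Uhlenbeck spectrum is $0$ (simple), $1$ with multiplicity $d$ (linear functions), and $2$ with multiplicity $d(d+1)/2$ (degree-two Hermite polynomials $x_ix_j-\delta_{ij}$). With the paper's counting convention, \eqref{eqn:index-gaussian-K} requires pushing strictly below $2$ at least the number of eigenvalues of the $2$-cluster needed to reach index $d+2$. Concretely, this means one or two eigenvalues, depending on whether $\lambda_0=0$ is counted. The eigenvalues near $0$ and $1$ stay far below $2$ for small perturbations.

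I would take a family $g_\veps=\delta+\veps h+O(\veps^2)$ and $V_\veps=|x|^2/2+\veps W+O(\veps^2)$, with $h,W$ compactly supported, or at least with perturbations decaying fast enough that the essential spectrum and completeness are unaffected. The constraints are $\CD(1,\infty)$, i.e.\ $\Ric_{g_\veps}+\nabla^2_{g_\veps}V_\veps\geq g_\veps$, and $\Ric_{g_\veps}\geq0$. At $\veps=0$ both hold with equality. The key design idea is to spend positive curvature to buy room in the potential. I would build $g_\veps$ with $\Ric_{g_\veps}\geq0$ and strictly positive on a region. A natural choice is a metric depending only on a few coordinates, say $(x_1,\dots,x_4)$ or a radial profile in $\bR^k$, $k\le d$, with a slightly capped, convex-to-flat profile. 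Then I would choose
\begin{align}
\nabla^2_{g_\veps}V_\veps\;=\;g_\veps-\Ric_{g_\veps}+(\text{small nonnegative correction}),
\end{align}
solving for $V_\veps$ on the support of the perturbation. The point is that the weighted Laplacian then sees a genuinely weaker potential Hessian where curvature is positive. Solvability of this Hessian equation needs $\Ric_{g_\veps}$ to be, up to a harmless error, the Hessian of a function. That is why I would use warped or rotationally symmetric metrics, where $\Ric$ is explicit and compatible choices of $V$ as a radial function exist.

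The heart of the proof is the first-order splitting of the eigenvalue $2$. By Kato perturbation theory, the eigenvalues near $2$ are $2+\veps\,\mu_j+O(\veps^2)$, where $\mu_j$ are the eigenvalues of the quadratic form
\begin{align}
Q(u)=\frac{d}{d\veps}\Big|_{\veps=0}\Big(\int|\nabla u|^2_{g_\veps}\,d\mu_\veps-2\int u^2\,d\mu_\veps\Big)
\end{align}
on the degree-two Hermite space $\cH_2$. I would compute $Q$ explicitly in terms of $h$ and $W$ using Gaussian integration by parts. Then I would show that $Q$ has at least the required number of negative eigenvalues, testing it on trace-free quadratics such as $x_1x_2$ and $x_1^2-x_2^2$ in the directions where the perturbation acts. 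A useful sanity check is Bochner's formula: $\int\Gamma_2(u)\,d\mu\geq\int|\nabla u|^2d\mu$ with the $\CD$ condition forces $\lambda_1\geq1$ but does not constrain the $2$-cluster. So a perturbation that keeps linear-type functions at $\geq1$ but relocates curvature from $\nabla^2V$ into $\Ric$ should lower the Hessian energy $\int|\nabla^2u|^2$ of some quadratics.

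I expect the main obstacle to be this sign computation: arranging simultaneously $\Ric_{g_\veps}\ge0$, the $\CD(1,\infty)$ inequality to all orders in $\veps$ (not just first order), and $Q<0$ on a subspace of the needed dimension. The dimension restriction $d\geq4$ should enter exactly here. Trace-free quadratics in enough independent variables are needed so that the negative contribution dominates the positive trace term. My first attempt would be a rotationally symmetric perturbation in $\bR^4$ extended trivially to the remaining coordinates. I would make the inequalities strict on the support, so that they persist under higher-order corrections, and take $\veps$ small to conclude \eqref{eqn:index-gaussian-K}.
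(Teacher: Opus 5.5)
There is a genuine gap. The step that carries the whole theorem is producing $(h,W)$ for which $\Ric_{g_\veps}\geq0$ and $\Ric_{g_\veps}+\nabla^2_{g_\veps}V_\veps\geq g_\veps$ hold exactly while $Q$ has a negative direction on the degree-two Hermite space. You only announce this step: no perturbation is specified and $Q$ is never computed.

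More seriously, the class you propose to search provably contains no counterexample. Suppose $h$ is compactly supported, or merely tends to $0$ at infinity. Then $d_{g_\veps}(0,x)\leq(1+o(1))|x|$ and the volume density tends to $1$, so $\vol_{g_\veps}(B_R(0))/(\omega_dR^d)\to1$ as $R\to\infty$. Since $\Ric_{g_\veps}\geq0$, Bishop--Gromov makes this ratio nonincreasing, and its limit as $R\to0$ is $1$. Hence it is identically $1$, and the equality case forces $g_\veps$ to be flat. A complete flat metric on $\bR^d$ is isometric to the Euclidean one. Pulling back by this isometry, your space becomes $(\bR^d,|\cdot|,Z^{-1}e^{-\widetilde V}\ud x)$ with $\nabla^2\widetilde V\geq\Id$. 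Theorem~\ref{thm:caff} combined with Theorem~\ref{thm:contraction} then gives $\lambda_k\geq\lambda_k(\bR^d,|\cdot|,\gamma^d)$ for every $k$. So positive curvature cannot be ``spent'' on a compact region. With $\Ric_g\geq0$, any counterexample must have strictly sub-Euclidean volume growth, i.e.\ it must alter the geometry at infinity.

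This is exactly what the paper does, and it does so non-perturbatively. It takes $g=\ud r^2+\rho(r)^2g_{\bbS^{d-1}}$ with a torpedo profile: $\rho=\delta\sin(r/\delta)$ near the origin and $\rho\equiv\delta=\sqrt{d-2}$ on the end. The potential is radial, vanishes on most of the cap, and satisfies $V''=1$ on the end. The end is then the cylinder $\bR\times\bbS^{d-1}(\sqrt{d-2})$ with a Gaussian weight along the axis, where $\Ric+\nabla^2V=g$ exactly. Everywhere else, $\rho''\leq0$ and $(d-2)(1-(\rho')^2)\geq\rho^2$ give $\CD(1,\infty)$ and $\Ric\geq0$.

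The test space consists of three pieces:
\begin{enumerate}
\item the constants;
\item the $d$ functions $a(r)x_i(\omega)$, whose energy on the end is governed by the first eigenvalue $(d-1)/(d-2)<2$ of $\bbS^{d-1}(\sqrt{d-2})$ (this is where $d\geq4$ enters);
\item one radial function, linear along the axis.
\end{enumerate}
Explicit integrals ($\delta_U,\delta_b>0$) give Rayleigh quotients below $2$, and orthogonality yields a $(d+2)$-dimensional test space. The mechanism is therefore not a splitting of the Hermite cluster at $2$. Instead, the $d$ linear modes of the Gaussian are traded for $d$ spherical modes of the cross-section, and the axis supplies one extra low mode. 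This space is far from $\gamma^d$, so no expansion at $\veps=0$ detects it.

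If your ``convex-to-flat'' profile was meant to become constant, you are describing this construction. In that case Kato theory plays no role, and the proof consists of the explicit Rayleigh-quotient estimates. If you want to remain perturbative, you must allow non-decaying $h$, for instance a cone-angle deficit at infinity, and then actually exhibit a quadratic with $Q<0$. Note that the paper's own first-order mechanisms (Theorem~\ref{thm:sphere-main} and Lemma~\ref{lem:gaussian-split-link}) beat the first-order curvature loss only for modes of large degree. Nothing suggests this succeeds at the quadratic level.
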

Finally, we can strengthen the above result to show that
\begin{mainthm}\label{thm:gaussian-split-main}
For every $d\geq5$, there exist a smooth complete weighted manifold $(\bR^d,g,\mu)$ and an integer $K\geq1$ with the following properties. The measure $\mu$ has the form
\begin{align}
        \mu
        =
        Z^{-1}e^{-V}\dvol_g,
        \qquad
        Z
        :=
        \int_{\bR^d}e^{-V}\dvol_g
        <
        \infty,
\end{align}
where $V:\bR^d\to\bR$ is smooth. Moreover,
$(\bR^d,g,\mu)$ satisfies the $\CD(1,\infty)$ condition,
\begin{align}
        \Ric_g
        \geq
        0,
        \qquad
        \nabla_g^2V
        \geq
        g,
\end{align}
but
\begin{align}\label{eqn:index-gaussian-split-K}
        \lambda_K(\bR^d,g,\mu)
        <
        \lambda_K(\bR^d,|\cdot|,\gamma^d).
\end{align}
\end{mainthm}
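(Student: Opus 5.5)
We construct a warped product $g=dr^2+f(r)^2 g_{\bS^{d-1}}$ on $\bR^d$ with the Gaussian-type potential $V$. The guiding idea is that the Gaussian has $\lambda_K=2$ with multiplicity $\binom{d+1}{2}$ (the degree-two Hermite polynomials), and we try to push one of these eigenvalues below $2$ while keeping $\Ric_g\geq0$ and $\nabla_g^2V\geq g$. Concretely, take $V=V(r)$ radial. On a warped product,
\begin{align}
\nabla_g^2 V = V''\,dr^2 + V'\frac{f'}{f}\,f^2 g_{\bS^{d-1}},
\end{align}
so the condition $\nabla^2_gV\geq g$ reads $V''\geq1$ and $V'f'/f\geq1$. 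The Ricci curvature is
\begin{align}
\Ric_g=-(d-1)\frac{f''}{f}\,dr^2+\Big((d-2)\frac{1-f'^2}{f^2}-\frac{f''}{f}\Big)f^2g_{\bS^{d-1}},
\end{align}
so $\Ric_g\geq0$ amounts to $f$ concave with $f'\leq1$ (and $f'(0)=1$, $f$ odd, for smoothness at the pole). Taking $f$ concave with $f(r)<r$ forces $f'/f<1/r$. This is the degree of freedom we will exploit, and it has to be compensated by choosing $V'\geq f/f'$, which is larger than $r$.

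To compute the spectrum we separate variables into spherical harmonics of degree $\ell$. This gives the radial operators
\begin{align}
L_\ell u=-u''-\Big((d-1)\frac{f'}{f}-V'\Big)u'+\frac{\ell(\ell+d-2)}{f^2}u
\end{align}
on $L^2(f^{d-1}e^{-V}dr)$. For the Gaussian case ($f=r$, $V=r^2/2$), the $\ell=2$ sector has bottom eigenvalue $2$, with eigenfunction $r^2$, and it alone contributes multiplicity $\dim\mathcal H_2=\frac{(d+2)(d-1)}{2}$. Our plan is to lower the bottom of the $\ell=2$ (or higher $\ell$) sector while showing that the other sectors do not fall further than the Gaussian count permits. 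Making $f$ smaller makes the angular term $\ell(\ell+d-2)/f^2$ larger, which works against us. So the mechanism has to come from the drift: increasing $V'$ strengthens confinement, and we must arrange things so that the Rayleigh quotient of a test function such as $u=f^2$ or $u=V-V(0)$ drops.

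We then pick $K=1+d+\binom{d+1}{2}$, i.e. the last index at level $2$, and aim to show $\lambda_K<2$. To do so we need $K$ test functions with Rayleigh quotient below $2$, orthogonal to the eigenfunctions below them. The plan is a perturbative computation with $f=r-\veps\phi(r)$, where $\phi\geq0$ has $\phi''\geq0$, is compactly supported away from $0$ in a suitable sense, and keeps $f$ concave; $V$ is then adjusted to saturate $V'=\max(r,f/f')$ or similar. We compute the first variation in $\veps$ of each degenerate eigenvalue cluster ($0$ and $1$ with $\ell=0,1$; $2$ with $\ell=0,1,2$) via the Hellmann–Feynman formula. The goal is to show that the summed first-order shift of the level-$2$ cluster is negative, with dimension $d\geq5$ entering through the weights $\dim\mathcal H_\ell$ and the constant $d-2$. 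The $d\geq5$ threshold presumably arises because the $(d-2)(1-f'^2)/f^2$ term in the Ricci curvature must dominate the perturbation's $-f''/f$ contribution, and because multiplicities tip the trace-type inequality.

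The main obstacle is finding a single perturbation for which the level-$2$ eigenvalues decrease while the $\CD$ constraints remain satisfied. Since $\CD(1,\infty)$ alone already forces $\lambda_1\geq1$, the lower clusters are rigid to first order. The first thing I would try is to relax the choice: instead of radial $V$, split the $\CD$ budget so that $\nabla^2V$ is exactly $g$ in some directions and $\Ric_g$ absorbs the rest. I would use Theorem~\ref{thm:gaussian-main}'s $d\geq4$ construction (where only $\CD(1,\infty)$ is needed) as a template, and check that the extra room in dimension $d\geq5$ lets $\Ric_g\geq0$ and $\nabla^2_gV\geq g$ hold separately for small $\veps$.
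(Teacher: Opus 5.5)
\textbf{The class of examples you propose provably cannot produce a counterexample.} So no choice of $\phi$, no Hellmann--Feynman bookkeeping and no multiplicity count can close the argument.

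Take exactly your constraints:
\begin{itemize}
\item $f''\le0$, from $\Ric_g(\partial_r,\partial_r)\ge0$;
\item $V''\ge1$ and $V'\ge a:=f/f'$, from $\nabla_g^2V\ge g$.
\end{itemize}
Under these, the radial monotone rearrangement $T(r,\omega)=(R(r),\omega)$ is $1$-Lipschitz. It maps between the radial marginals $e^{-W_1}\propto r^{d-1}e^{-r^2/2}$ and $e^{-W_2}\propto f^{d-1}e^{-V}$.

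To see this, set $h:=\log R'=W_2(R(r))-W_1(r)$. At an interior local maximum $r_0$ with $M:=R'(r_0)>1$, one has $MW_2'(R_0)=W_1'(r_0)$ and $M^2W_2''(R_0)\le W_1''(r_0)$. We also have
\begin{itemize}
\item $W_1'=r-\frac{d-1}{r}$ and $W_1''=1+\frac{d-1}{r^2}$;
\item $W_2'\ge a-\frac{d-1}{a}$ and $W_2''\ge 1+\frac{d-1}{a^2}$.
\end{itemize}
The second-order relation then forces $r_0<a/M$. Since $x\mapsto x-\frac{d-1}{x}$ is increasing, the first-order relation gives
\[
Ma-\frac{M(d-1)}{a}\le r_0-\frac{d-1}{r_0}<\frac aM-\frac{M(d-1)}{a},
\]
that is, $M<1$, a contradiction.

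At the endpoints the same conclusion holds:
\begin{itemize}
\item $R'(0)^d=e^{V(0)}\int_0^\infty f^{d-1}e^{-V}\big/\int_0^\infty r^{d-1}e^{-r^2/2}\le1$, because $f(r)\le r$ and $V-V(0)\ge r^2/2$.
\item If $h>0$ somewhere, then, having no positive interior local maximum, $h$ would satisfy $h\ge h(r_1)>0$ on a half-line $[r_1,\infty)$. Comparing Gaussian tails gives $R(r)\le r+O(1/r)$, which rules this out.
\end{itemize}
Hence $R'\le1$ and $f(R(r))\le R(r)\le r$, so $T$ pushes $\gamma^d$ onto $\mu$ with $|dT|\le1$. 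Theorem~\ref{thm:contraction} then gives $\lambda_k(\bR^d,g,\mu)\ge\lambda_k(\bR^d,|\cdot|,\gamma^d)$ for every $k$.

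Your fallback does not help either. The torpedo of Theorem~\ref{thm:gaussian-main} has $f'=0$ on its cylindrical end, where $\nabla_g^2V$ vanishes tangentially.

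There are also smaller problems:
\begin{itemize}
\item A nonnegative compactly supported $\phi$ with $\phi''\ge0$ is identically zero.
\item A negative summed first-order shift of the level-$2$ cluster could at best give $\lambda_{d+2}<2$. It does not give $\lambda_K<2$ for the last index $K$ of that cluster, which you chose.
\end{itemize}

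\textbf{The missing idea is to break the round symmetry of the cross-section, as in Theorem~\ref{thm:sphere-main}.} With $n=d-1\ge4$, the paper builds an $S^1$-invariant metric $\bar g$ on $\bS^n$ with three properties:
\begin{itemize}
\item it is joined to $g_{\can}$ through volume-preserving, trace-free deformations;
\item $\Ric_{\bar g}\ge(n-1)\rho^2\bar g$;
\item $\cR_{\bar g}(\Re(z^q))<s^2q(q+n-1)$ for some $s<\rho$.
\end{itemize}
It then takes the cone $\ud r^2+s^2r^2\bar g$, which has $\Ric\ge0$ because $s<\rho$, with potential $\alpha r^2/2$ for some $\alpha>1$. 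On this cone, two families of test functions are used:
\begin{itemize}
\item the $N_q=\binom{n+q}{n+1}$ polynomials of degree $<q$, compared to the Gaussian through Hermite polynomials, which uses $\dvol_{\bar g}=\dvol_{g_{\can}}$;
\item the single function $r^q\Re(z^q)$, orthogonal to them by $S^1$-invariance.
\end{itemize}
Together they span an $(N_q+1)$-dimensional space with Rayleigh quotient $<q=\lambda_{N_q+1}(\gamma^d)$.

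The cone vertex is then smoothed by a slow trace-free interpolation of the link that keeps $\Ric\ge0$ and $\nabla^2V\ge g$, and the cap is shrunk by rescaling. Thus $K=N_q+1$ is the first index of a high Gaussian level $q$, not an index at level $2$. The threshold $d\ge5$ comes from needing $n\ge4$ for the link perturbation, not from the warped-product terms you point to.
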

Thus, Conjectures \ref{conj:sphere-spectral} and \ref{conj:gaussian-spectral} are false in general. Combining Theorems \ref{thm:sphere-main}, \ref{thm:gaussian-main} and \ref{thm:gaussian-split-main} with Theorem~\ref{thm:contraction}, we also deduce that Conjectures \ref{conj:sphere-transport}, \ref{conj:gaussian-transport} and \ref{conj:gaussian-combined} are false in general; see Corollaries \ref{cor:sphere}, \ref{cor:gaussian} and \ref{cor:gaussian-split-main}.

Although the conjectures above are false in general, they were supported by considerably more than the formal analogy with Caffarelli's theorem in Euclidean space. In the compact setting, Conjecture \ref{conj:sphere-spectral} is consistent with the Lichnerowicz theorem, the Bérard--Gallot heat-trace comparison, Weyl asymptotics, and a comparison up to a dimension-independent multiplicative constant for sufficiently large eigenvalues. Similarly, the existence of a contracting transport map in Conjecture \ref{conj:sphere-transport} is consistent with many fundamental results in comparison geometry, including the Bonnet--Myers diameter bound, Bishop--Gromov volume comparison, the Bakry--\'Emery log-Sobolev inequality, the sharp Sobolev inequality, and the L\'evy--Gromov isoperimetric inequality. We refer to \cite[Sections 1 and 2]{milman2018spectral} for a detailed discussion. 

There are also several positive generalizations, as well as new proofs, of Caffarelli's contraction theorem. These include results in infinite-dimensional spaces \cite{feyel2004monge,mikulincer2024brownian}, results for $1/d$-concave densities \cite{carlier2024optimal}, and results in curved settings \cite{fathi2024transportation,lopez2025bakry,ge2025generalization}. Kim and Milman \cite{kim2012generalization} constructed Lipschitz transports using the heat flow, while entropic optimal transport has led to further proofs and extensions \cite{chewi2023entropic,fathi2020proof}. In a related direction, Beck and Jerison \cite{beck2021friedland} asked whether there exists an injective $1$-Lipschitz transport from the uniform measure on the hemisphere to a
log-concave probability measure supported on a geodesically convex subset, and whether the corresponding $W_2$-optimal transport map is $1$-Lipschitz. Fathi, Fradelizi, Gozlan, and Zugmeyer \cite{fathi2023some} gave counterexamples to both assertions in general. They left open whether, for every geodesically convex subset $K$ of positive volume, there exists a $1$-Lipschitz transport from $\sigma_+$
onto the normalized uniform measure on $K$. This case is resolved by Theorem~\ref{thm:hemisphere-transport}.

\section{Preliminaries}
A weighted manifold is a triple \((M^d,g,\mu)\), where \((M,g)\) is a complete
smooth Riemannian manifold,
\[
        \ud\mu=e^{-W}\dvol_g,
\]
and \(W\) is smooth. The weighted Laplacian is defined as:
\begin{align}
\Delta_{g,\mu}f= \Delta_g f-\langle\nabla_g W,\nabla_g f\rangle_g.    
\end{align}
We use the nonnegative operator $-\Delta_{g,\mu}$ and for $k\geq1$, define the $k$-th variational eigenvalue by
\begin{align}
        \lambda_k(M,g,\mu)
        :=
        \inf_{\substack{
        E\subset W^{1,2}(M,g,\mu)\\
        \dim E=k
        }}
        \sup_{0\neq f\in E}
        \frac{
        \int_M|\nabla f|_g^2\,\ud\mu
        }{
        \int_Mf^2\,\ud\mu
        }.
\end{align}
When the corresponding weighted Laplacian has compact resolvent, finite volume $\mu(M)<\infty$ and $M$ is connected then these variational eigenvalues are the usual eigenvalues, counted with multiplicity. We can write these eigenvalues in non-decreasing order as
\[
0=\lambda_1(M,g,\mu)\leq \lambda_2(M,g,
\mu)\leq\cdots,
\]
including multiplicity. The Bakry--\'Emery curvature tensor is
\[
\Ric_{g,
\mu}:=\Ric_g+\nabla_g^2W.
\]
From Theorem 0.12 \cite{lott2009ricci}, a weighted manifold $(M^d,g,\mu)$ is said to satisfy curvature-dimension condition $\CD(\rho,
\infty)$ for $\rho \in \R$ if 
\begin{align}
\Ric_g+\nabla_g^2W\geq \rho g.    
\end{align}
Let $\gamma^d(\ud x)=(2\pi)^{-d/2}e^{-|x|^2/2}\,\ud x$ denote the standard Gaussian measure on $\bR^d$, with the weighted Laplacian being the Ornstein--Uhlenbeck operator whose first few eigenvalues satisfy (cf. Section 2.1 in \cite{milman2018spectral})
\begin{align}\label{eqn:gaussian-spectrum}
\lambda_1(\R^d,|\cdot|,\gamma^d)=0,
\qquad
\lambda_2(\R^d,|\cdot|,\gamma^d)=\cdots=\lambda_{d+1}(\R^d,|\cdot|,\gamma^d)=1,
\qquad
\lambda_{d+2}(\R^d,|\cdot|,\gamma^d)=2.    
\end{align}
Note that $(\R^d,|\cdot|,\gamma^d)$ is $\CD(1,\infty).$  Next, we recall the following theorem due to Milman \cite{milman2018spectral} that relates spectral comparison between weighted manifolds to the Lipschitz transport maps.
\begin{thm}[Contraction Principle]\label{thm:contraction}
Let
\[
    T : (M_1, g_1, \mu_1) \to (M_2, g_2, \mu_2)
\]
denote an $L$-Lipschitz map between two complete weighted manifolds
pushing forward $\mu_1$ onto $\mu_2$ up to a finite constant. Then
\[
    \lambda_k(M_2, g_2, \mu_2)
    \geq
    \frac{1}{L^2}\lambda_k(M_1, g_1, \mu_1)
    \qquad \forall k \geq 1.
\]
\end{thm}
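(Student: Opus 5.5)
The plan is to pull test functions back along $T$ and feed them into the min--max characterization of $\lambda_k$. Normalize so that $T_\#\mu_1 = c\,\mu_2$ with $0<c<\infty$. The constant $c$ cancels in every Rayleigh quotient, so we may take $c=1$.

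Fix $k$ and $\varepsilon>0$. Choose a $k$-dimensional subspace $E\subset W^{1,2}(M_2,g_2,\mu_2)$ with
\[
\sup_{0\neq f\in E} \frac{\int_{M_2}|\nabla f|^2\,\ud\mu_2}{\int_{M_2} f^2\,\ud\mu_2} \leq \lambda_k(M_2,g_2,\mu_2)+\varepsilon .
\]
By density we may take $E$ spanned by Lipschitz functions; this uses completeness, so that Lipschitz functions with bounded support are dense in $W^{1,2}$. If $\lambda_k(M_2)=\infty$ there is nothing to prove. Define the pullback $E^*:=\{f\circ T: f\in E\}$.

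The change-of-variables formula $\int_{M_1} h\circ T\,\ud\mu_1=\int_{M_2} h\,\ud\mu_2$ gives
\[
\int_{M_1} (f\circ T)^2\,\ud\mu_1=\int_{M_2} f^2\,\ud\mu_2 .
\]
Hence $f\mapsto f\circ T$ is injective on $E$ (if $f\circ T=0$ a.e., then $f=0$ in $L^2(\mu_2)$), and $\dim E^*=k$. For the gradient term, $f\circ T$ is locally Lipschitz, and Rademacher's theorem gives a.e. differentiability. The key pointwise bound is
\[
|\nabla (f\circ T)|(x)\le L\,\operatorname{lip} f(T(x)),
\]
where $\operatorname{lip} f$ is the local Lipschitz constant; this follows directly from $d_2(Tx,Ty)\le L\,d_1(x,y)$. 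Since $\operatorname{lip} f=|\nabla f|$ a.e. for locally Lipschitz $f$ on a Riemannian manifold, pushing forward gives
\[
\int_{M_1} |\nabla(f\circ T)|^2\,\ud\mu_1 \le L^2\int_{M_2} (\operatorname{lip} f)^2\,\ud\mu_2 = L^2\int_{M_2}|\nabla f|^2\,\ud\mu_2 .
\]
In particular $E^*\subset W^{1,2}(M_1,g_1,\mu_1)$. Plugging $E^*$ into the variational definition then yields
\[
\lambda_k(M_1)\le L^2\bigl(\lambda_k(M_2)+\varepsilon\bigr),
\]
and letting $\varepsilon\to0$ gives the claim.

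The main obstacle is the measure-theoretic justification in the last step. The set where $\operatorname{lip} f\neq|\nabla f|$ (or where $f$ fails to be differentiable) is $\mu_2$-null, but its preimage under $T$ must also be $\mu_1$-null for the bound to transfer. This holds precisely because $T_\#\mu_1=\mu_2$. Working with $\operatorname{lip} f$ everywhere, rather than with $|\nabla f|$, avoids needing differentiability of $f$ along the image of $T$. Beyond this, one only needs the standard facts that the upper gradient $\operatorname{lip}(f\circ T)$ dominates $|\nabla(f\circ T)|$ a.e. and that Lipschitz functions are dense in $W^{1,2}$.
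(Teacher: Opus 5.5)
Your proposal is correct. The paper does not reprove this statement; it quotes it from Milman, and your argument is essentially the standard proof given there: pull back a nearly optimal $k$-dimensional space of Lipschitz test functions along $T$, use the pointwise bound $|\nabla(f\circ T)|\le L\,(\operatorname{lip} f)\circ T$ together with $T_\#\mu_1=c\,\mu_2$ to compare Rayleigh quotients, and conclude by min--max, with the $\mu_2$-null exceptional set handled through the pushforward identity.
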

In particular, a $1$-Lipschitz transport map from a model space to a target space implies that the spectrum of the target space dominates that of the source.

\section{Proof of Theorem~\ref{thm:sphere-main}}\label{sec:sphere-thm-main}
Let $d\geq 4$. We denote the round sphere $\bbS^d=\{(z,y)\in \mathbb C\times \mathbb R^{d-1}: |z|^2+|y|^2=1\}$ and will write \(z=x_1+\mathrm i x_2\). Consider the map
\begin{align}
    F:(0,\pi/2)\times \bbS^1\times \bbS^{d-2} \to \mathbb{S}^d \subset \mathbb{C}\times \bbR^{d-1}
\end{align}
such that 
\begin{align}
    F(t,\theta,\omega) = (e^{i\theta}\sin t, (\cos t) \omega).
\end{align}
Then $F$ maps into the sphere $\mathbb{S}^d$ and is a diffeomorphism onto the open set $U:= \{(z,y)\in\mathbb S^d\subset\mathbb C\times\mathbb R^{d-1}:z\neq0,\ y\neq0\}.$ Let $g_0$ denote the induced Euclidean metric on $\mathbb{S}^d$, then in local coordinates on $U$, we can write the metric $g_0$ using the pull-back metric $F^*g_0$ as follows:
\begin{align}\label{eqn:expression-for-g_can-Sn}
    g_0  &= |\ud z|^2 + |\ud y|^2\\
    &= |e^{i\theta} (\cos t)  \ud t+ ie^{i\theta} (\sin t)  \ud \theta|^2 + |-(\sin t) \omega \ud t + (\cos t) \ud \omega|^2\\
    &= \cos^2 t \ud t^2 + \sin^2 t \ud \theta^2 + \sin^2 t \ud t^2 + (\cos^2 t) g_{\mathbb{S}^{d-2}}\\
    &= \ud t^2+\sin^2t\,\ud \theta^2+\cos^2t\,g_{\mathbb S^{d-2}},
\end{align}
where in the above expression we used the orthogonality of $e^{i\theta}$ and $ie^{i\theta}$ with respect to the real Euclidean inner product in $\C\simeq \R^2$ and the fact $|\omega|^2=1$ which after differentiating implies that $\la \omega, \ud \omega\ra=0$.

The idea is to perturb this metric slightly in the \(\bbS^1\)-direction. As
expected, this lowers the Ricci curvature lower bound from \(d-1\) to some value $\rho$, but it will also drop the eigenvalues of the perturbed manifold below that of the sphere with the round metric and $\Ric=\rho g.$ Note that, $g_0=g_{\can}$ where $g_{\can}$ denotes the canonical round metric on the unit sphere $\bbS^d$ and $\Ric_{g_0}=(d-1)g_0.$

Before proceeding with the proof of Theorem~\ref{thm:sphere-main}, we record a
useful lemma (cf. Lemma 2.3 in \cite{karaca2018gradient}) which we will use to
compute the Ricci curvature of warped product metrics.
\begin{lem}[Ricci tensor of a multiple warped product]\label{lem:multiply-warped-ricci}
Let \((B,g_B)\) and \((F_i,g_i)\), \(1\leq i\leq m\), be Riemannian
manifolds, and let $b_i:B\to (0,\infty)$ be smooth positive functions. Let $M=B\times_{b_1}F_1\times\cdots\times_{b_m}F_m$ be the multiply warped product with metric
\[
        g
        =
        g_B+\sum_{i=1}^m b_i^2 g_i.
\]
Write $s_i=\dim F_i.$ Let \(X,Y\) be vector fields tangent to the base \(B\), and let \(V_i,W_i\)
be vector fields tangent to the fiber \(F_i\), all lifted to \(M\). Then the
Ricci tensor of \(g\) is given as follows:
\begin{enumerate}
    \item $\operatorname{Ric}_g(X,Y)
        =
        \operatorname{Ric}_B(X,Y)
        -
        \sum_{i=1}^m
        \frac{s_i}{b_i}\operatorname{Hess}_B b_i(X,Y).$
    \item $\operatorname{Ric}_g(X,V_i)=0.$
    \item if
\(i\neq j\), then
$\operatorname{Ric}_g(V_i,V_j)=0.$
    \item for each $1\leq i\leq m$ we have
    \begin{align}
      \operatorname{Ric}_g(V_i,W_i)
        &=
        \operatorname{Ric}_{F_i}(V_i,W_i)
        \\
        &\quad
        -
        g(V_i,W_i)
        \left(
        \frac{\Delta_B b_i}{b_i}
        +
        (s_i-1)\frac{|\nabla_B b_i|^2}{b_i^2}
        +
        \sum_{\substack{k=1\\ k\neq i}}^m
        s_k
        \frac{\langle\nabla_B b_i,\nabla_B b_k\rangle_B}{b_i b_k}
        \right).  
    \end{align}
    
\end{enumerate}
Here all gradients, Hessians, Laplacians, and inner products appearing on
the right-hand side are computed with respect to the base metric \(g_B\).
\end{lem}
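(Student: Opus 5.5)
The plan is to compute the Levi-Civita connection of the product metric $g=g_B+\sum_i b_i^2g_i$ by Koszul's formula, and then obtain the Ricci tensor by tracing the curvature over a local orthonormal frame adapted to the splitting $TM=TB\oplus TF_1\oplus\cdots\oplus TF_m$. Equivalently, one could iterate the classical O'Neill formulas for a single warped product. That route is awkward, however, because after the first step the base is no longer $B$. The direct connection computation therefore seems cleaner.

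\textbf{Step 1 (connection).} For lifted fields $X,Y$ on $B$ and $V_i,W_i$ on $F_i$, the Koszul formula, together with the facts that lifts from different factors commute and that $g(V_i,W_i)=b_i^2g_i(V_i,W_i)$ depends on the base only through $b_i$, gives:
\begin{align}
\nabla_XY&=\nabla^B_XY, & \nabla_XV_i&=\nabla_{V_i}X=\frac{X(b_i)}{b_i}V_i,\\
\nabla_{V_i}W_j&=0\ (i\neq j), & \nabla_{V_i}W_i&=\nabla^{F_i}_{V_i}W_i-\frac{g(V_i,W_i)}{b_i}\nabla_Bb_i.
\end{align}

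\textbf{Step 2 (curvature).} Substituting these into $R(U,Z)Z'=\nabla_U\nabla_ZZ'-\nabla_Z\nabla_UZ'-\nabla_{[U,Z]}Z'$, I will obtain the sectional-type terms:
\begin{align}
R(X,V_i)Y&=-\frac{\operatorname{Hess}_Bb_i(X,Y)}{b_i}V_i,\\
R(V_i,W_j)V_i&=-g(V_i,V_i)\frac{\langle\nabla_Bb_i,\nabla_Bb_j\rangle}{b_ib_j}W_j \quad (i\neq j),\\
R(V_i,W_i)U_i&=R^{F_i}(V_i,W_i)U_i-\frac{|\nabla_Bb_i|^2}{b_i^2}\bigl(g(W_i,U_i)V_i-g(V_i,U_i)W_i\bigr).
\end{align}
I will also check that the mixed terms, such as $R(X,Y)V_i$ projected appropriately, vanish where needed.

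\textbf{Step 3 (tracing).} Next I trace over an orthonormal frame made of $\{e_a\}$ tangent to $B$ and $\{b_i^{-1}f^i_\alpha\}$ tangent to each $F_i$.
\begin{enumerate}
\item Tracing for $\operatorname{Ric}(X,Y)$ gives $\operatorname{Ric}_B(X,Y)$ from the base directions. Each $F_i$ direction contributes $-\operatorname{Hess}_Bb_i(X,Y)/b_i$, and summing over the $s_i$ directions of each fiber yields item (1).
\item For item (2), each term of $\operatorname{Ric}(X,V_i)$ involves $g(R(e,X)V_i,e)$. This vanishes because the connection formulas map base and fiber directions into spans that are orthogonal in the relevant slot; $\nabla_{V_i}X$ is proportional to $V_i$.
\item Item (3) follows in the same way, since $\nabla_{V_i}W_j=0$ for $i\ne j$.
\item For item (4), the base directions contribute $-g(V_i,W_i)\,\Delta_Bb_i/b_i$, as the trace of $\operatorname{Hess}_Bb_i/b_i$. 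The other directions of $F_i$ contribute $\operatorname{Ric}_{F_i}(V_i,W_i)-(s_i-1)g(V_i,W_i)|\nabla_Bb_i|^2/b_i^2$. Here $\operatorname{Ric}_{F_i}$ is scale invariant under $g_i\mapsto b_i^2g_i$, and the fiber curvature term scales correctly. Each other fiber $F_k$ contributes $s_k$ copies of $-g(V_i,W_i)\langle\nabla_Bb_i,\nabla_Bb_k\rangle/(b_ib_k)$.
\end{enumerate}

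The main obstacle I expect is bookkeeping rather than any conceptual difficulty. The delicate points are getting the signs and normalizations right in $\nabla_{V_i}W_i$, and correctly identifying the cross-fiber term in Step 2, which is where the $k\neq i$ sum arises.
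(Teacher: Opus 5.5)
The paper gives no argument for this lemma; it quotes it from \cite[Lemma~2.3]{karaca2018gradient}. Your route is the standard self-contained derivation: Koszul formula, then curvature of the adapted fields, then a trace over the frame $\{e_a\}\cup\{b_i^{-1}f^i_\alpha\}$. It costs more than a citation, but it fixes the sign conventions that the paper's later applications depend on, e.g.\ $R_t=-a_\veps''/a_\veps-(d-2)c''/c$. Your Step~1 connection formulas are correct, and so are the contributions you assert in Step~3. The trouble is that Step~3 does not follow from Step~2 as written.

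With your stated convention $R(U,Z)=\nabla_U\nabla_Z-\nabla_Z\nabla_U-\nabla_{[U,Z]}$, Step~1 gives
\[
R(X,V_i)Y=+\frac{\operatorname{Hess}_Bb_i(X,Y)}{b_i}\,V_i,\qquad
R(V_i,W_j)V_i=+g(V_i,V_i)\,\frac{\langle\nabla_Bb_i,\nabla_Bb_j\rangle_B}{b_ib_j}\,W_j\quad(i\neq j).
\]
So your first two displays carry the sign of the opposite (O'Neill) convention $R_{UZ}=\nabla_{[U,Z]}-[\nabla_U,\nabla_Z]$. Your third display, within $F_i$, is correct in your own convention.

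If you trace your displays consistently, e.g.\ with $\Ric(Y,Z)=\sum_e g(R(e,Y)Z,e)$, you get $+\operatorname{Hess}_Bb_i(X,Y)/b_i$ in (1) and $+s_k\langle\nabla_Bb_i,\nabla_Bb_k\rangle_B/(b_ib_k)$ inside (4). Both are the opposite of what you claim. The round metric $g_{\can}=\ud t^2+\sin^2t\,\ud\theta^2+\cos^2t\,g_{\bbS^{d-2}}$ is a quick check. Your first two displays give sectional curvature $-1$ on the plane spanned by $\partial_t,\partial_\theta$, and on the plane spanned by $\partial_\theta$ and an $\bbS^{d-2}$ direction.

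Step~2 also omits the formula that actually produces the $\Delta_Bb_i$ term in (4), namely $R(X,V_i)W_i=-\frac{g(V_i,W_i)}{b_i}\nabla^B_X\nabla_Bb_i$.

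Finally, your reasons for (2) and (3) point to the wrong mechanism. The relevant curvature vectors are generally nonzero, e.g.
\[
R(U_i,X)V_i=\frac{g(U_i,V_i)}{b_i}\nabla^B_X\nabla_Bb_i,\qquad R(U_j,V_i)V_j=g(U_j,V_j)\frac{\langle\nabla_Bb_i,\nabla_Bb_j\rangle_B}{b_ib_j}V_i .
\]
These Ricci components vanish because the first vector is base-valued and the second is $F_i$-valued. Each is therefore orthogonal to the frame vector ($U_i$, resp.\ $U_j$) against which it is traced. The vanishing does not come from $\nabla_{V_i}W_j=0$.

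With these corrections your plan goes through.
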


\begin{proof}[Proof of Theorem~\ref{thm:sphere-main}]
Let \(d\geq 4\). Our proof proceeds in four main steps. 
\paragraph{$\mathrm{(i)}$ \textit{Construction of the metric}} For \(0<\varepsilon\ll 1\), define a metric on the coordinate chart $U$ as 
\begin{equation}\label{eq:geps-coordinate}
g_\varepsilon=
        \ud t^2 +
        \sin^2t\bigl(1+\varepsilon\sin^4t\bigr)\ud \theta^2 + \cos^2t\,g_{\mathbb S^{d-2}}.
\end{equation}
This is a smooth, well-defined metric on $\bbS^d$ as can be seen by rewriting 
\begin{align}\label{eq:geps-global}
g_\varepsilon= g_0+\varepsilon |z|^2\alpha\otimes\alpha, 
\end{align}
where $\alpha=x_1\,dx_2-x_2\,dx_1=\sin^2 t \ud \theta$ and we used the fact that $|z|^2=\sin^2 t.$
\paragraph{$\mathrm{(ii)}$ \textit{Ricci curvature lower bound}} We now estimate its Ricci tensor. Set 
\begin{align}
a_{\veps}(t):= (\sin t)\sqrt{1+\veps \sin^4 t},\quad c(t):=\cos t 
\end{align}
Then, working in a local orthonormal frame with basis vectors $e_t=\partial_t$, $e_\theta=\frac{1}{a_\veps(t)} \partial_\theta$ and $e_j=\frac{1}{\cos t} v_j$ where $\{v_j\}_{j=1}^{d-2}$ is a local $g_{\bbS^{d-2}}$-orthonormal frame on the $\bbS^{d-2}$-factor, we can denote 
\begin{align}
R_t=\Ric_{g_\veps}(e_t,e_t),\quad R_\theta=\Ric_{g_{\veps}}(e_\theta,e_\theta),\quad R_{S}=\Ric_{g_\veps}(e_j,e_j) \text{ for }1\leq j\leq d-2.
\end{align}
Note that the last term is independent of any $1\leq j\leq d-2$ since the Ricci curvature of $\bbS^{d-2}$ is a constant multiple of the $g_{\bbS^{d-2}}.$ Using Lemma \ref{lem:multiply-warped-ricci} we can compute each term as follows. First,
\begin{align}
        -\frac{a_\veps''}{a_\veps}
        =
        1-\veps\left(\frac{\sin^2 t(10-12\sin^2 t)+2\veps \sin^6 t (3-4\sin^2 t)}{(1+\veps \sin^4 t)^2}\right),
\end{align}
and
\begin{align}\label{eq:tan-a-prime-over-a}
        \tan t\frac{a_\veps'}{a_\veps}
        =
        1+
        \frac{2\veps \sin^4t}{1+\veps\sin^4t}.
\end{align}
We shall also use the elementary estimate
\begin{align}\label{eq:ricci-elementary-bound}
        f(q):=
        \frac{q (10-12q )+2\veps q^3  (3-4q )}{(1+\veps q^2)^2}
        \leq \frac{25}{12}
\end{align}
for $q\in [0,1]$ and $\veps\geq0$. 
% Indeed,
% \begin{align}
%         \frac{25}{12}(1+\veps q^2)^2
%         &-
%         \left(q(10-12q)+2\veps q^3(3-4q)\right)\notag\\
%         &=
%         12\left(q-\frac{5}{12}\right)^2
%         +
%         \veps q^2\left(8q^2-6q+\frac{25}{6}\right)
%         +
%         \frac{25}{12}\veps^2q^4
%         \geq0,
% \end{align}
% because $8q^2-6q+\frac{25}{6}>0$ for all $q\in[0,1]$.
Thus
\begin{align}
 R_t &= \operatorname{Ric}_{g_{\veps}}(\partial_t, \partial_t) \\
 &= \Ric_{B}(\partial_t,\partial_t) - \sum_{i=1}^2 \frac{s_i}{b_i}\operatorname{Hess}_{B}b_i(\partial_t,\partial_t)\\
 &= 0 - \frac{a_\veps''}{a_\veps}-\frac{(d-2)c''}{c} \\
 &= 1-\veps f(\sin^2 t) + (d-2) \\
 &\geq (d-1) -\frac{25\veps}{12}, 
\end{align}
where $B=(0,\pi/2)$, $s_1=\operatorname{dim} \bbS^1=1$, $s_2=\dim \bbS^{d-2}=(d-2)$, $b_1=a_\veps$, and $b_2=c$. Similarly, applying Lemma \ref{lem:multiply-warped-ricci}  to the
\(\bbS^1\)-factor with \(B=(0,\pi/2)\), \(F_1=\bbS^1\), \(F_2=\bbS^{d-2}\),
\(b_1=a_\veps\), \(b_2=c\), \(s_1=\dim\bbS^1=1\), and
\(s_2=\dim\bbS^{d-2}=d-2\) and using \eqref{eq:tan-a-prime-over-a}, we have
\begin{align}
    R_\theta
    &=
    \Ric_{g_\veps}(e_\theta,e_\theta) =
    \frac{1}{a_\veps^2}
    \Ric_{g_\veps}(\partial_\theta,\partial_\theta) \notag\\
    &=
    \frac{1}{a_\veps^2}
    \left[
    \Ric_{\bbS^1}(\partial_\theta,\partial_\theta)
    -
    g_\veps(\partial_\theta,\partial_\theta)
    \left(
    \frac{a_\veps''}{a_\veps}
    +
    (d-2)\frac{a_\veps'c'}{a_\veps c}
    \right)
    \right] \notag\\
    &=
    -\frac{a_\veps''}{a_\veps}
    -(d-2)\frac{a_\veps'c'}{a_\veps c} \notag\\
    &=
    -\frac{a_\veps''}{a_\veps}
    +(d-2)\tan t \frac{a_\veps'}{a_\veps} \notag\\
    &=
    (d-2)
    +
    \frac{2(d-2)\veps \sin^4 t}{1+\veps \sin^4 t}
    +
    1
    -
    \veps f(\sin^2 t) \notag\\
    &\geq d-1-\frac{25\veps}{12}.
\end{align}
Lastly, applying Lemma
\ref{lem:multiply-warped-ricci} to the \(\bbS^{d-2}\)-factor gives
\begin{align}
    R_S
    &=
    \Ric_{g_\veps}(e_j,e_j) =
    \frac{1}{c^2}\Ric_{g_\veps}(v_j,v_j) \notag\\
    &=
    \frac{1}{c^2}
    \left[
    \Ric_{\bbS^{d-2}}(v_j,v_j)
    -
    g_\veps(v_j,v_j)
    \left(
    \frac{c''}{c}
    +
    (d-3)\frac{(c')^2}{c^2}
    +
    \frac{a_\veps'c'}{a_\veps c}
    \right)
    \right] \notag\\
    &=
    \frac{d-3}{c^2}
    -\frac{c''}{c}
    -(d-3)\frac{(c')^2}{c^2}
    -\frac{a_\veps'c'}{a_\veps c} \notag\\
    &=
    d-2+\tan t\frac{a_\veps'}{a_\veps} \notag\\
    &=
    d-2+1+\frac{2\veps\sin^4t}{1+\veps\sin^4t}
    \geq d-1.
\end{align}
% Similarly, using \eqref{eq:tan-a-prime-over-a}, we have 
% \begin{align}
%     R_\theta &=  -\frac{a_\veps''}{a_\veps} + (d-2)\tan t \frac{a_\veps'}{a_\veps}\\
%     &= (d-2) + \frac{2(d-2)\veps \sin^4 t}{1+\veps \sin^4 t} + 1  \\
%     &\quad  -\veps f(\sin^2 t)\\
%     &\geq d-1 -\frac{25\veps}{12}.
% \end{align}
% For $v_j$ with $|v_j|_{g_{\bbS^{d-2}}}=1$, the corresponding
% $g_\veps$-unit vector is $e_j=c^{-1}v_j$. Applying Lemma
% \ref{lem:multiply-warped-ricci} to the $\bbS^{d-2}$-factor gives
% \begin{align}
%     R_S
%     &=\operatorname{Ric}_{g_\veps}(e_j,e_j)\notag\\
%     &=
%     \frac{d-3}{c^2}
%     -\frac{c''}{c}
%     -(d-3)\frac{(c')^2}{c^2}
%     -\frac{a_\veps'c'}{a_\veps c}\notag\\
%     &=
%     d-2+\tan t\frac{a_\veps'}{a_\veps}\notag\\
%     &=
%     d-2+1+\frac{2\veps\sin^4t}{1+\veps\sin^4t}
%     \geq d-1.
% \end{align}
Combining the above lower bounds on $R_t$, $R_\theta$ and $R_S$ implies that
\begin{align}
        \operatorname{Ric}_{g_\varepsilon}
        \geq
        \left(
        d-1-\frac{25 \varepsilon}{12}
        \right)g_\varepsilon
        \qquad\text{on }U.
\end{align}
By continuity the above inequality holds on all of $\bbS^d$, i.e.
\begin{equation}\label{eq:ricci-lower}
        \operatorname{Ric}_{g_\varepsilon}
        \geq
        \left(
        d-1-\frac{25 \varepsilon}{12}
        \right)g_\varepsilon.
\end{equation}
Set $\rho_\veps:=d-1-\frac{25 \varepsilon}{12}.$ For $d\geq 2$, $\rho_\veps>0$ for $0<\veps\ll 1$.  

\paragraph{$\mathrm{(iii)}$ \textit{Construction of test functions}} We next show the desired spectral inequality. To achieve this goal, we will construct suitable test functions.  Consider the Rayleigh quotient,
\EQ{
\calR_\varepsilon(f)
    = \frac{\int_{\mathbb S^d}
        |\nabla f|_{g_\varepsilon}^2\,\dvol_{g_\varepsilon}}
        {\int_{\mathbb S^d}
        |f|^2\,\dvol_{g_\varepsilon}}.
}
The simplest test functions to try would be to take functions of the form $|z|^k$, however to take advantage of the $\bbS^1$ factor in the metric, we will consider for $k\geq 1$, the function
\begin{align}
     f_k=\operatorname{Re}(z^k) = q^{k/2}\cos(k\theta),
        \text{ where } q=\sin^2t.   
\end{align}
The $\cos(k\theta)$ term in $f_k$ will turn out to be advantageous. Observe that
\begin{align}\label{eqn:denom-lb}
\int_{\mathbb S^d} |f_k|^2\,\dvol_{g_\varepsilon} &= \int_{\bbS^d} q^{k} \cos^2(k\theta) \sqrt{1+\veps q^2} \dvol_{g_0}\\
&= \frac{1}{2}\int_{\bbS^d} q^{k}  \sqrt{1+\veps q^2} \dvol_{g_0}
\end{align}
where we used the fact that $\dvol_{g_\varepsilon}=\sqrt{1+\varepsilon q^2}\,\dvol_{g_0},$ and the fact that $\int_0^{2\pi} \cos^2(k\theta) =\pi=\frac{1}{2}\int_{0}^{2\pi}\ud\theta$. Then denote $Q_j=\int_{\mathbb{S}^d} q^j \dvol_{g_0}.$ Using $\sqrt{1+x}\geq 1+\frac{x}{2}-\frac{x^2}{8}$ and the above expression, we get 
\begin{align}
  \int_{\mathbb S^d} |f_k|^2\,\dvol_{g_\varepsilon} &\geq \frac{Q_k}{2} +\frac{\veps Q_{k+2}}{4}-\frac{\veps^2 Q_{k+4}}{16}.
\end{align}
Next, we estimate the numerator. Since, $q=\sin^2 t$ we have
\[
        \partial_t f_k
        =
        kq^{(k-1)/2}\sqrt{1-q}\,\cos(k\theta),
        \qquad
        \partial_\theta f_k
        =
        -kq^{k/2}\sin(k\theta),
\]
we have 
\[
|\nabla f_k|_{g_\varepsilon}^2
=
k^2q^{k-1}(1-q)\cos^2(k\theta)
+
\frac{k^2q^{k-1}}{1+\varepsilon q^2}\sin^2(k\theta).
\]
Thus,
\begin{align}\label{eqn:numerator-ub}
\int_{\mathbb S^d} |\nabla f_k|_{g_\veps}^2\,\dvol_{g_\varepsilon} &= \int_{\bbS^d} k^2 q^{k-1}(1-q)\cos^2(k\theta)\sqrt{1+\veps q^2} \dvol_{g_0}  \\
&\quad + \int_{\bbS^d}\frac{k^2 q^{k-1}}{1+\veps q^2} \sin^2 (k\theta) \sqrt{1+\veps q^2} \dvol_{g_0}\\
&= \frac{k^2}{2}\int_{\bbS^d} q^{k-1}(1-q)\sqrt{1+\veps q^2} \dvol_{g_0}  + \frac{k^2}{2}\int_{\bbS^d}\frac{q^{k-1}}{\sqrt{1+\veps q^2}} \dvol_{g_0}\\
&\leq \frac{k^2}{2} \left(2Q_{k-1}-Q_k -\frac{\veps}{2}Q_{k+2}+\frac{3\veps^2}{8}Q_{k+3}\right)\\
&\leq \frac{1}{2}k(k+d-1)Q_k - \frac{\veps k^2}{4} Q_{k+2} + \frac{3\veps^2 k^2}{16} Q_{k+3}
\end{align}
where we used the elementary inequality for $q\in [0,1]$,
\begin{align}
    (1-q)\sqrt{1+\veps q^2} + \frac{1}{\sqrt{1+\veps q^2}}\leq 2-q-\frac{\veps q^3}{2} + \frac{3\veps^2q^4}{8}
\end{align}
and the fact that the sum of the squares of the first two coordinate functions on \(\bbS^d\),
namely \(q=|z|^2\), follows a beta distribution $\operatorname{Beta}(1,\frac{d-1}{2})$ (cf. Corollary 1.1 \cite{frankl1990some}). This implies, in particular, that (cf. Equation~(2.2) in \cite{szablowski2021moment})
\begin{align}
    \frac{Q_k}{Q_{k-1}} =\frac{2k}{2k+d-1},\quad 2Q_{k-1}-Q_k  =\left(\frac{k+d-1}{k}\right) Q_k.
\end{align}
Define
\begin{align}
        \beta_{d,k}
        :=
        \frac{k^2+k(k+d-1)}{2k(k+d-1)}
        \frac{Q_{k+2}}{Q_k}.
\end{align}
Since
\begin{align}
        \frac{Q_{k+2}}{Q_k}
        =
        \frac{2(k+1)}{2(k+1)+d-1}\cdot
        \frac{2(k+2)}{2(k+2)+d-1}
        \to 1, \text{ as }k\to \infty
\end{align}
we see that $\lim_{k\to\infty}\beta_{d,k}=1$. Therefore, since $d\geq4$, we may choose $k\geq k_0$ for some $k_0\gg 1$ such that
\begin{align}
        \eta_{d,k}:=\beta_{d,k}-\frac{25}{12(d-1)}>0.
\end{align}
We fix this value of $k$ for the remainder of the proof. In particular, the integer $K$ defined below depends only on $d$ and not on $\veps$. For some fixed \(k\geq k_0\), define
\begin{align}
        \Phi_{d,k}(\veps)
        :=
        \frac{
        \frac{1}{2}k(k+d-1)Q_k-\frac{\veps k^2}{4}Q_{k+2}
        +\frac{3\veps^2k^2}{16}Q_{k+3}}
        {\frac{Q_k}{2}+\frac{\veps Q_{k+2}}{4}-\frac{\veps^2Q_{k+4}}{16}}.
\end{align}
For $\veps>0$ small enough, the denominator is positive and
\eqref{eqn:denom-lb} and \eqref{eqn:numerator-ub} imply
\begin{align}
        \cR_\veps(f_k)\leq \Phi_{d,k}(\veps).
\end{align}
Moreover, $\Phi_{d,k}$ is smooth near $\veps=0$ and
\begin{align}
        \Phi_{d,k}(0)&=k(k+d-1),\\
        \Phi_{d,k}'(0)&=
        -\frac{k^2+k(k+d-1)}{2}\frac{Q_{k+2}}{Q_k}
        =-k(k+d-1)\beta_{d,k}.
\end{align}
Thus there is a constant $C_{d,k}<\infty$ such that, for all sufficiently small
$\veps>0$,
\begin{align}
        \cR_\veps(f_k)
        \leq
        k(k+d-1)(1-\beta_{d,k}\veps)+C_{d,k}\veps^2.
\end{align}
Choosing $\veps>0$ further so that
\begin{align}
        C_{d,k}\veps^2
        <
        \frac12 k(k+d-1)\eta_{d,k}\veps,
\end{align}
we get
\begin{align}\label{eqn:f_k-ub}
        \mathcal{R}_\veps(f_k)
        &<
        k(k+d-1)\left(1-\frac{25\veps}{12(d-1)}\right)\\
        &=
        \frac{\rho_\veps}{d-1}k(k+d-1),
\end{align}
where we recall that $\rho_\veps:=d-1-\frac{25\veps}{12}.$ 
Define the following real vector spaces
\begin{align}
  E_{k-1}:=\bigoplus_{\ell=0}^{k-1}H_{\ell},\quad F_k:= E_{k-1} \oplus \operatorname{span}\{f_k\}  
\end{align}
where $H_\ell$ is the space of spherical harmonics of degree $0\leq \ell\leq k-1$. Note that $f_k$ is orthogonal to $H_\ell$ for any $0\leq \ell\leq k-1$ since, after complexifying $H_\ell$, any $h_{\ell}\in H_\ell$ can be expressed as a linear combination of $z^\alp \bar{z}^\beta y^\gamma$ for $\alp,\beta\in \bN_0$ and $\gamma \in \bN_0^{d-1}$ and some constants $c_{\alp\beta\gamma}\in \bC$ as follows:
\begin{align}
    \int_{\mathbb{S}^d} h_{\ell} f_k \dvol_{g_\veps} &= \sum_{\alpha+\beta+|\gamma|=\ell}c_{\alpha\beta\gamma}\int_{\mathbb{S}^d} z^\alp \bar{z}^\beta y^\gamma  \Re(z^k) \dvol_{g_\veps} \\
    &= \sum_{\alpha+\beta+|\gamma|=\ell}\frac{c_{\alpha\beta\gamma}}{2}\int_{\mathbb{S}^d} z^\alp \bar{z}^\beta y^\gamma (z^k+\bar{z}^k)   \dvol_{g_\veps}\\
    &= \frac12
    \sum_{\alpha+\beta+|\gamma|=\ell}
    c_{\alpha\beta\gamma}
    \left[
    \int_{\mathbb S^d}
    z^{\alpha+k}\bar z^\beta y^\gamma
    \,d\operatorname{vol}_{g_\veps}
    +
    \int_{\mathbb S^d}
    z^\alpha\bar z^{\beta+k}y^\gamma
    \,d\operatorname{vol}_{g_\veps}
    \right].
\end{align}
Both of the above integrals vanish. This can be seen by making the change of variables
\((z,y)\mapsto(e^{i\tau}z,y)\) which preserves the volume form $\dvol_{g_{\veps}}$
\begin{align}\label{eqn:change-of-variables}
    \int_{\mathbb S^d}
    z^{\alpha+k}\bar z^\beta y^\gamma
    \,\dvol_{g_\veps}
    &=
    e^{i(\alpha+k-\beta)\tau}
    \int_{\mathbb S^d}
    z^{\alpha+k}\bar z^\beta y^\gamma
    \,\dvol_{g_\veps}.
\end{align}
Since $\alpha+\beta+|\gamma|=\ell\leq k-1,$  we get $\alpha+k-\beta\neq 0.$ Choosing
\(\tau\) so that \(e^{i(\alpha+k-\beta)\tau}\neq 1\), we obtain
\[
        \int_{\mathbb S^d}
        z^{\alpha+k}\bar z^\beta y^\gamma
        \,\dvol_{g_\veps}
        =
        0.
\]
Similarly,
\begin{align}
    \int_{\mathbb S^d}
    z^\alpha\bar z^{\beta+k}y^\gamma
    \,\dvol_{g_\veps}
    &= 0.
\end{align}
Thus, for any $0\leq \ell \leq k-1$, we have
\begin{align}
\int_{\mathbb{S}^d} h_{\ell} f_k \dvol_{g_\veps} = 0.
\end{align}
We now claim that 
\begin{align}\label{claim:F_k}
    \sup_{f\neq 0,f\in F_k}\mathcal{R}_\veps(f)<\frac{\rho_\veps}{d-1}k(k+d-1).
\end{align}
To this end, take any \(f\in F_k\) and write
\(f=h_{k-1}+a f_k\), where \(a\in\R\) and \(h_{k-1}\in E_{k-1}\). Then
\begin{align}
  \cR_\veps(f) &= \mathcal{R}_\veps(h_{k-1} + a f_k)\\
  &=  \frac{\int_{\mathbb S^d}
        |\nabla h_{k-1}|_{g_\varepsilon}^2\,\dvol_{g_\varepsilon}+a^2\int_{\mathbb S^d}
        |\nabla f_{k}|_{g_\varepsilon}^2\,\dvol_{g_\varepsilon}}
        {\int_{\mathbb S^d}
        |h_{k-1}|^2\,\dvol_{g_\varepsilon}+a^2\int_{\mathbb S^d}
        |f_{k}|^2\,\dvol_{g_\varepsilon}}\\
        &\leq \max \{\cR_\veps(h_{k-1}),\cR_\veps(f_k)\},
\end{align}
where we used the fact that for any $h_{k-1}\in E_{k-1}$,
\begin{align}
    \int_{\mathbb{S}^d} \la \nabla h_{k-1}, \nabla f_k \ra_{g_\veps} \dvol_{g_\veps} &= \int_{\mathbb{S}^d} (\partial_t h_{k-1})(\partial_t f_k) \dvol_{g_\veps}\\
    &\quad + \frac{1}{q(1+\veps q^2)}(\partial_\theta h_{k-1})(\partial_\theta f_k) + \frac{1}{\cos^2 t}\la \nabla_\om h_{k-1}, \nabla_\om f_k\ra  \dvol_{g_\veps}\\
    &= 0.
\end{align}
Indeed, $f_k$ is independent of $\omega$, so the last term vanishes. For a monomial
$z^\alpha\bar z^\beta y^\gamma$ of degree $\ell\leq k-1$, the $\partial_t$- and
$\partial_\theta$-terms above are finite sums of terms with $\theta$-weights
$e^{i(\alpha-\beta+k)\theta}$ or $e^{i(\alpha-\beta-k)\theta}$, multiplied by
functions of $t$ and $\omega$ only. Since $\alpha+\beta+|\gamma|=\ell\leq k-1$, both
$\alpha-\beta+k$ and $\alpha-\beta-k$ are nonzero. Their $\theta$-integrals vanish,
and the claim follows by linearity. We already have an upper bound for $\cR_\veps(f_k)$ so we focus on $\cR_\veps(h_{k-1}).$ To this end, first observe that for the Rayleigh quotient for the canonical round metric on the sphere $\bbS^d$, we have 
\begin{align}
    \cR_0(h)\leq (k-1)(k+d-2)
\end{align}
for any $h\in E_{k-1}$ since 
\begin{align}
    -\Delta_{g_0} h_{\ell} = \ell(\ell + d-1) h_{\ell}
\end{align}
for any $h_{\ell} \in H_\ell$ and $0\leq \ell\leq k-1.$ Furthermore, for any function $h\in E_{k-1}$ and for sufficiently small $0<\veps\ll 1$ we have
\begin{align}\label{eqn:h-ub}
    \cR_\veps(h) &= \frac{\int_{\bbS^d} |\nabla h|_{g_\veps}^2\dvol_{g_\veps}}{\int_{\bbS^d} h^2 \dvol_{g_\veps}}  \\
    &\leq \sqrt{1+\veps} \frac{\int_{\bbS^d} |\nabla h|_{g_0}^2\dvol_{g_0}}{\int_{\bbS^d} h^2 \dvol_{g_0}}  \\
    &= \sqrt{1+\veps}\cR_0(h)\\
    &\leq \left(1+\frac{\veps}{2}\right) (k-1)(k+d-2)\\
    &< \frac{\rho_\veps}{d-1}k(k+d-1),
\end{align}
where the last inequality follows by choosing $\veps<\veps_0$ where $\veps_0:=\frac{2k+d-2}{\frac{1}{2}(k-1)(k+d-2)+\frac{25}{12(d-1)}k(k+d-1)}$. Therefore, by \eqref{eqn:f_k-ub} and \eqref{eqn:h-ub} we have 
\begin{align}
    \sup_{f\neq 0,f\in F_k}\mathcal{R}_\veps(f)<\frac{\rho_\veps}{d-1}k(k+d-1).
\end{align}
\paragraph{$\mathrm{(iv)}$ \textit{Rescaling and convergence to the round metric}} Define
\begin{align}
        K
        :=
        \operatorname{dim}E_{k-1}+1,
        \quad
        c_\veps
        :=
        \frac{\rho_\veps}{d-1}
        =
        1-\frac{25\veps}{12(d-1)},\quad 
        \widehat g_\veps
        :=
        c_\veps g_\veps.
\end{align}
Since $E_{k-1}$ is the direct sum of the eigenspaces of the round sphere of degrees $0,\ldots,k-1$, the integer $K$ is the first index in the degree-$k$ eigenvalue cluster. Hence
\begin{align}\label{eqn:sphere-round-K}
        \lam_K(\bbS^d,g_0)
        =
        k(k+d-1).
\end{align}
It follows from the min--max characterization and the estimate above on
$F_k$ that
\begin{align}\label{eqn:sphere-unscaled-local-gap}
        \lam_K(\bbS^d,g_\veps)
        \leq
        \sup_{0\neq f\in F_k}\cR_\veps(f)
        <
        \frac{\rho_\veps}{d-1}k(k+d-1)
        =
        c_\veps\lam_K(\bbS^d,g_0).
\end{align}

Choose $\veps_d>0$ sufficiently small so that all the preceding estimates
hold and $c_\veps>0$ for every $0\leq\veps<\veps_d$. Then the metrics
$\widehat g_\veps$ are well-defined. Since $c_\veps$ is constant on
$\bbS^d$, the Levi--Civita connections of $g_\veps$ and
$\widehat g_\veps$ agree. Therefore, using
\eqref{eq:ricci-lower}, we obtain
\begin{align}
        \Ric_{\widehat g_\veps}
        =
        \Ric_{g_\veps}\geq
        \rho_\veps g_\veps=
        (d-1)\widehat g_\veps.
\end{align}
This proves \eqref{eqn:sphere-local-Ricci}. Moreover, for every constant $c>0$, we have $ \cR_{cg}(f)=c^{-1}\cR_g(f).$ 
Consequently, the min--max characterization gives
$\lam_j(\bbS^d,cg)=c^{-1}\lam_j(\bbS^d,g)$ for every $j\geq1$. Thus,
using \eqref{eqn:sphere-unscaled-local-gap}, we obtain
\begin{align}
        \lam_K(\bbS^d,\widehat g_\veps)
        =
        c_\veps^{-1}\lam_K(\bbS^d,g_\veps)<
        \lam_K(\bbS^d,g_0).
\end{align}
This proves \eqref{eqn:index-K}. Finally, by \eqref{eq:geps-global}, we have
\begin{align}
        \widehat g_\veps-g_0
        &=
        \left(
        1-\frac{25\veps}{12(d-1)}
        \right)
        \left(
        g_0+\veps |z|^2\alpha\otimes\alpha
        \right)
        -g_0\\
        &=
        \veps
        \left(
        |z|^2\alpha\otimes\alpha
        -\frac{25}{12(d-1)}g_0
        \right)
        -
        \frac{25\veps^2}{12(d-1)}
        |z|^2\alpha\otimes\alpha.
\end{align}
Since $|z|^2\alpha\otimes\alpha$ is a smooth tensor on the compact sphere,
for every integer $m\geq0$ there exists a constant $C_{d,m}<\infty$ such
that $\left\|
        \widehat g_\veps-g_0
        \right\|_{C^m(g_0)}
        \leq
        C_{d,m}\veps.$
Consequently,
\begin{align}
        \widehat g_\veps
        \rightarrow
        g_0
        =
        g_\can
         \text{ in }C^\infty
        \text{ as }\veps\to0.
\end{align}
This proves \eqref{eqn:sphere-local-convergence} and completes the proof.
\end{proof}
% \begin{cor}\label{cor:sphere}
% Let $(\bbS^d,g)$ be the metric constructed in Theorem~\ref{thm:sphere-main}. Then there does not exist a map $T:(\bbS^d,g_\can,\dvol_{g_\can})\to (\bbS^d,g,\dvol_g)$ which is measure preserving and contracting.  
% \end{cor}
\begin{cor}\label{cor:sphere}
Let $\{\widehat g_\veps\}_{0\leq\veps<\veps_d}$ be the family of metrics
constructed in Theorem~\ref{thm:sphere-main}. Then, for every
$0<\veps<\veps_d$, there does not exist a $1$-Lipschitz map
\[
        T:
        (\bbS^d,g_\can,\vol_{g_\can})
        \to
        (\bbS^d,\widehat g_\veps,\vol_{\widehat g_\veps})
\]
pushing forward $\vol_{g_\can}$ onto $\vol_{\widehat g_\veps}$ up to a
finite constant.
\end{cor}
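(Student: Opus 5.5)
The plan is a contradiction argument built on the Contraction Principle (Theorem~\ref{thm:contraction}) and the strict gap \eqref{eqn:index-K} from Theorem~\ref{thm:sphere-main}. Fix $0<\veps<\veps_d$ and suppose such a $1$-Lipschitz map $T$ exists.

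First I will check that both spaces are admissible weighted manifolds in the sense of Section 2. Both $(\bbS^d,g_\can,\vol_{g_\can})$ and $(\bbS^d,\widehat g_\veps,\vol_{\widehat g_\veps})$ are closed, hence complete, connected Riemannian manifolds with smooth metrics (Theorem~\ref{thm:sphere-main}). Their measures have the form $e^{-W}\dvol$ with $W\equiv 0$. The hypothesis says $T$ is $L$-Lipschitz with $L=1$ and pushes $\vol_{g_\can}$ onto $\vol_{\widehat g_\veps}$ up to a finite constant, which is exactly the setting of Theorem~\ref{thm:contraction}.

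Applying that theorem gives
\begin{align}
\lam_k(\bbS^d,\widehat g_\veps,\vol_{\widehat g_\veps})\geq \lam_k(\bbS^d,g_\can,\vol_{g_\can}) \qquad \forall k\geq1.
\end{align}

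Next I will check that these variational eigenvalues are the unweighted Laplace eigenvalues appearing in Theorem~\ref{thm:sphere-main}. With $W\equiv0$, the weighted Laplacian is the Laplace--Beltrami operator. The Rayleigh quotients defining $\lam_k$ coincide with $\cR_\veps$ (and with $\cR_0$ for $g_\can$). Compactness gives compact resolvent and finite volume, so the min--max values are the ordinary eigenvalues counted with multiplicity.

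Specializing to the integer $k=K$ from Theorem~\ref{thm:sphere-main} then yields $\lam_K(\bbS^d,\widehat g_\veps)\geq\lam_K(\bbS^d,g_\can)$. This contradicts \eqref{eqn:index-K}, so no such $T$ exists.

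There is no real obstacle here; the only points needing care are bookkeeping. One is that the normalization constant in the pushforward is irrelevant, since Rayleigh quotients are invariant under scaling the measure. The other is that the direction of the map matches Theorem~\ref{thm:contraction}: the source is the model space $g_\can$ and the target is $\widehat g_\veps$.
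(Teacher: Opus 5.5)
Your proposal is correct and matches the paper's proof: assume such a $T$ exists, apply Theorem~\ref{thm:contraction} with $L=1$ to get $\lam_k(\bbS^d,\widehat g_\veps)\geq\lam_k(\bbS^d,g_\can)$ for all $k$, and contradict \eqref{eqn:index-K} at $k=K$. Your additional bookkeeping (trivial weights, compact resolvent, and the irrelevance of the normalizing constant) is accurate, and the paper leaves it implicit.
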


\begin{proof}
If such a map existed, then Theorem~\ref{thm:contraction} would imply that for every $k\geq1$ we have
\begin{align}
        \lam_k(\bbS^d,\widehat g_\veps)
        \geq
        \lam_k(\bbS^d,g_\can).
\end{align}
This contradicts \eqref{eqn:index-K}.
\end{proof}
Thus, Theorem~\ref{thm:sphere-main} and Corollary \ref{cor:sphere} imply that Conjectures 3 and 4 in \cite{milman2018spectral} are not true in general.
\section{Proof of Theorem~\ref{thm:sphere-transport-dimension-two}}
\begin{proof}[Proof of Theorem~\ref{thm:sphere-transport-dimension-two}]
By rescaling the metric it suffices to prove the theorem in the case when $\rho=1$. Our proof proceeds in four main steps. We first prove the theorem under the stronger assumption $\Ric_g>g$ and then remove the strict inequality by an approximation argument.
\paragraph{$\mathrm{(i)}$ \textit{Construction of the initial embedding and the flow}}
Assume that $\Ric_g>g$. Since $g$ is two-dimensional, we have
\begin{align}
        \Ric_g
        =
        K_g g,
        \qquad
        \operatorname{Scal}_g
        =
        2K_g,
\end{align}
where $K_g$ denotes the Gauss curvature. Thus $K_g>1$. By the Gauss--Bonnet theorem and the classification of closed
surfaces, $N^2$ is diffeomorphic to either $\bS^2$ or $\mathbb{RP}^2$. Let
\begin{align}
        \pi:(\widetilde N,\widetilde g)\to(N,g)
\end{align}
be the identity map in the first case and the orientable double covering in the
second case, where $\widetilde g:=\pi^*g$. Denote the degree of $\pi$ by
$m\in\{1,2\}$. Then $\pi$ is a Riemannian covering, $\widetilde N$ is
diffeomorphic to $\bS^2$, $K_{\widetilde g}=K_g\circ\pi>1$, $\Ric_{\widetilde g}>\widetilde g$, and
\begin{align}\label{eq:sphere-transport-dimension-two-covering-identities}
        \vol_{\widetilde g}(\widetilde N)
        =
        m\vol_g(N),
        \qquad
        \pi_\#\dvol_{\widetilde g}
        =
        m\dvol_g,
\end{align}
while
\begin{align}\label{eq:sphere-transport-dimension-two-covering-distance}
        d_g(\pi(p),\pi(q))
        \leq
        d_{\widetilde g}(p,q)
        \qquad
        \text{for every }p,q\in\widetilde N.
\end{align}
After identifying $\widetilde N$ with $\bS^2$, \cite[Theorem~1.3]{lu2020weyl} gives a $C^2$ isometric embedding $ X_0:(\widetilde N,\widetilde g) \to  (\bS^3,\bar g_{\can}).$ Since $\widetilde N$ is orientable, $X_0$ admits a global unit normal.
Let $A_0$ denote the corresponding shape operator. The Gauss equation,
which is valid for the $C^2$ embedding, gives
\begin{align}
        \det A_0
        =
        K_{\widetilde g}-1
        >
        0.
\end{align}
Thus $X_0$ is an elliptic isometric embedding. By the regularity theorem
of Dubrovin \cite{dubrovin1966regularity}, if the intrinsic
and ambient metrics are of class $C^k$, with $k>3$, then a $C^2$
elliptic isometric immersion is of class $C^{k-1,\alpha}$ for every
$0<\alpha<1$. Since $\widetilde g$ and $\bar g_{\can}$ are smooth, it
follows that $X_0$ is smooth.

Moreover, $A_0$ is definite at every point. Since $\widetilde N$ is
connected, its sign is constant. After reversing the global unit normal
if necessary, we may assume that the second fundamental form $h_0$ is
positive definite. Set $ \widetilde M_0:=X_0(\widetilde N),$ and let $M_0$ denote its polar hypersurface. By \cite[Section~4]{gerhardt2015curvature}, $M_0$ is smooth and strictly
convex. Define
\begin{align}
        F(\lambda_1,\lambda_2)
        &:=
        \frac{2}{
        \lambda_1^{-1}+\lambda_2^{-1}
        },
        \\
        \widetilde F(\kappa_1,\kappa_2)
        &:=
        \frac{\kappa_1+\kappa_2}{2}.
\end{align}
These curvature functions satisfy the hypotheses of
\cite[Theorem~1.1]{gerhardt2015curvature}. Applying that theorem to the
polar pair $(M_0,\widetilde M_0)$ gives the expanding flow
\begin{align}\label{eq:sphere-transport-dimension-two-flow}
        \partial_tX_t
        =
        \frac{2}{H_t}\nu_t,
        \qquad
        X(0,\cdot)
        =
        X_0,
        \qquad
        0\leq t<T_*.
\end{align}
The flow exists smoothly up to a finite time $T_*$, remains strictly
convex, and converges smoothly in geodesic graph coordinates to a round
equator as $t\to T_*$. Here, $\nu_t$ denotes the outward unit normal.

\paragraph{$\mathrm{(ii)}$ \textit{Evolution of the induced metric and its volume form}}
Let $g_t:=X_t^*\bar g_{\can}$ denote the induced metric on the fixed surface
$\widetilde N$. For $V,W\in T\widetilde N$, set
$h_t(V,W):=\la\bar\nabla_{dX_t(V)}\nu_t,dX_t(W)\ra_{\bar g_{\can}}$,
where $dX_t$ denotes the differential of $X_t$. Since strict convexity is
preserved by the flow, $h_t$ is positive definite and
$H_t=\operatorname{tr}_{g_t}h_t$. If $X_i:=\partial_iX_t$ and
$\bar\nabla_i:=\bar\nabla_{X_i}$, then
\begin{align}\label{eq:sphere-transport-dimension-two-metric-evolution}
        \partial_t(g_t)_{ij}
        =
        \la\bar\nabla_i(2H_t^{-1}\nu_t),X_j\ra_{\bar g_{\can}}
        +
        \la X_i,\bar\nabla_j(2H_t^{-1}\nu_t)\ra_{\bar g_{\can}}
        =
        \frac{4}{H_t}(h_t)_{ij}.
\end{align}
Since $h_t$ is positive definite and $H_t>0$, we have
$g_t\geq g_0=\widetilde g$ for every $0\leq t<T_*$. Taking the trace in
\eqref{eq:sphere-transport-dimension-two-metric-evolution} gives
$\operatorname{tr}_{g_t}(\partial_tg_t)=4$. Using
$\partial_t\dvol_{g_t}=\frac12\operatorname{tr}_{g_t}(\partial_tg_t)
\dvol_{g_t}$, we obtain $\partial_t\dvol_{g_t}=2\dvol_{g_t}$, and hence
\begin{align}\label{eq:sphere-transport-dimension-two-volume-scaling}
        \dvol_{g_t}
        =
        e^{2t}\dvol_{\widetilde g}.
\end{align}
Thus the induced metric is monotone increasing, while its volume form is
multiplied by the spatially constant factor $e^{2t}$.

\paragraph{$\mathrm{(iii)}$ \textit{Construction of the transport map}}
By the convergence statement in \cite[Theorem~1.1]{gerhardt2015curvature}, we
may choose coordinates $(r,\omega)$ on an open set containing the limiting
equator such that
$\bar g_{\can}=\ud r^2+\sin^2r\,g_{\can}$ and the limiting equator is given by
$r=\pi/2$. For $t$ sufficiently close to $T_*$, there are smooth functions
$u_t:\bS^2\to(0,\pi)$ and parametrizations
$Y_t(\omega):=(u_t(\omega),\omega)$ of $X_t(\widetilde N)$ such that
$u_t\to\pi/2$ in $C^\infty(\bS^2)$ as $t\to T_*$. Consequently,
\begin{align}\label{eq:sphere-transport-dimension-two-source-convergence}
        \widehat g_t
        &:=
        Y_t^*\bar g_{\can}
        =
        \ud u_t\otimes\ud u_t+\sin^2(u_t)g_{\can},
        \qquad
        \widehat g_t
        \rightarrow
        g_{\can}
        \quad
        \text{in }C^\infty(\bS^2)
        \text{ as }t\to T_*.
\end{align}

Define $\widetilde T_t:=X_t^{-1}\circ Y_t:\bS^2\to\widetilde N$. Since
$X_t\circ\widetilde T_t=Y_t$, we have
$\widetilde T_t^*g_t=\widehat g_t$. Combining this identity with
$g_t\geq\widetilde g$ gives
$\widetilde T_t^*\widetilde g\leq\widehat g_t$. In particular,
\begin{align}\label{eq:sphere-transport-dimension-two-Tt-distance}
        d_{\widetilde g}(\widetilde T_t(x),\widetilde T_t(y))
        \leq
        d_{\widehat g_t}(x,y)
        \qquad
        \text{for every }x,y\in\bS^2.
\end{align}
Moreover, \eqref{eq:sphere-transport-dimension-two-volume-scaling} gives
$\dvol_{\widehat g_t}=\widetilde T_t^*\dvol_{g_t}
=e^{2t}\widetilde T_t^*\dvol_{\widetilde g}$. Therefore,
\begin{align}\label{eq:sphere-transport-dimension-two-Tt-pushforward}
        (\widetilde T_t)_\#\dvol_{\widehat g_t}
        =
        e^{2t}\dvol_{\widetilde g}.
\end{align}
Since $\widetilde T_t$ is a diffeomorphism and
$\widetilde T_t^*g_t=\widehat g_t$, we have
\begin{align}
        \vol_{g_t}(\widetilde N)
        =
        \vol_{\widehat g_t}(\bS^2)
        \rightarrow
        4\pi
\end{align}
as $t\to T_*$. Combining this with
\eqref{eq:sphere-transport-dimension-two-volume-scaling}, we obtain
\begin{align}\label{eq:sphere-transport-dimension-two-terminal-factor}
        e^{2T_*}
        =
        \frac{4\pi}{\vol_{\widetilde g}(\widetilde N)}.
\end{align}

By \eqref{eq:sphere-transport-dimension-two-source-convergence}, the metrics
$\widehat g_t$ are uniformly bounded above by a fixed multiple of $g_{\can}$
for $t$ sufficiently close to $T_*$. Thus
\eqref{eq:sphere-transport-dimension-two-Tt-distance} implies that the maps
$\widetilde T_t$ are equicontinuous. By the Arzel\`a--Ascoli theorem, after
passing to a sequence $t_j\to T_*$, there exists a continuous map
$\widetilde T:\bS^2\to\widetilde N$ such that
$\widetilde T_{t_j}\to\widetilde T$ uniformly. Since
$d_{\widehat g_{t_j}}\to d_{g_{\can}}$ uniformly on
$\bS^2\times\bS^2$, we may pass to the limit in
\eqref{eq:sphere-transport-dimension-two-Tt-distance} to obtain
\begin{align}\label{eq:sphere-transport-dimension-two-cover-limit-contraction}
        d_{\widetilde g}(\widetilde T(x),\widetilde T(y))
        \leq
        d_{g_{\can}}(x,y)
        \qquad
        \text{for every }x,y\in\bS^2.
\end{align}
We next verify that $\widetilde T$ is a transport map. For every
$f\in C^0(\widetilde N)$,
\eqref{eq:sphere-transport-dimension-two-Tt-pushforward} gives
\begin{align}
        \int_{\bS^2}f(\widetilde T_{t_j}(x))
        \dvol_{\widehat g_{t_j}}(x)
        =
        e^{2t_j}\int_{\widetilde N}f\dvol_{\widetilde g}.
\end{align}
Using the uniform convergence of $\widetilde T_{t_j}$,
\eqref{eq:sphere-transport-dimension-two-source-convergence}, and
\eqref{eq:sphere-transport-dimension-two-terminal-factor}, we may let
$j\to\infty$ to obtain
\begin{align}\label{eq:sphere-transport-dimension-two-cover-pushforward}
        \widetilde T_\#\dvol_{g_{\can}}
        =
        \frac{4\pi}{\vol_{\widetilde g}(\widetilde N)}
        \dvol_{\widetilde g}.
\end{align}
To see that $\widetilde T$ is surjective, suppose that
$y\notin\widetilde T(\bS^2)$. Since $\widetilde T(\bS^2)$ is compact, there
exists a nonempty open set containing $y$ and disjoint from
$\widetilde T(\bS^2)$. The left-hand side of the preceding identity assigns
zero measure to this set, whereas the right-hand side assigns positive measure,
which is a contradiction.

Set $T:=\pi\circ\widetilde T:\bS^2\to N$. By
\eqref{eq:sphere-transport-dimension-two-covering-distance} and
\eqref{eq:sphere-transport-dimension-two-cover-limit-contraction},
\begin{align}\label{eq:sphere-transport-dimension-two-limit-contraction}
        d_g(T(x),T(y))
        \leq
        d_{g_{\can}}(x,y)
        \qquad
        \text{for every }x,y\in\bS^2.
\end{align}
Furthermore, using
\eqref{eq:sphere-transport-dimension-two-covering-identities} and
\eqref{eq:sphere-transport-dimension-two-cover-pushforward}, we obtain
\begin{align}\label{eq:sphere-transport-dimension-two-normalized-pushforward}
        T_\#\dvol_{g_{\can}}
        &=
        \pi_\#\left(
        \widetilde T_\#\dvol_{g_{\can}}
        \right)
        =
        \frac{4\pi}{\vol_{\widetilde g}(\widetilde N)}
        \pi_\#\dvol_{\widetilde g}
        =
        \frac{4\pi}{\vol_g(N)}\dvol_g.
\end{align}
Since both $\widetilde T$ and $\pi$ are surjective, $T$ is surjective. This
proves the theorem under the strict inequality $\Ric_g>g$.

\paragraph{$\mathrm{(iv)}$ \textit{Removal of the strict inequality}}
Suppose now that $\Ric_g\geq g$. For $0<\veps<1$, set
$g_\veps:=(1-\veps)g$. Since a constant rescaling does not change the Ricci
tensor, we have $\Ric_{g_\veps}=\Ric_g\geq g=(1-\veps)^{-1}g_\veps>g_\veps$. By the
previous step, there exists a continuous surjective map
$T_\veps:(\bS^2,g_{\can})\to(N,g_\veps)$ such that
\begin{align}
        d_g(T_\veps(x),T_\veps(y))
        &\leq
        \frac{1}{\sqrt{1-\veps}}d_{g_{\can}}(x,y),
        \notag\\
        (T_\veps)_\#\dvol_{g_{\can}}
        &=
        \frac{4\pi}{\vol_{g_\veps}(N)}\dvol_{g_\veps}
        =
        \frac{4\pi}{\vol_g(N)}\dvol_g.
\end{align}
Choose a sequence $\veps_j\to0$ with $0<\veps_j\leq1/2$. The maps $T_{\veps_j}$ are equicontinuous. After passing to a uniformly convergent subsequence, the same
limiting argument as above gives a continuous map
$T:(\bS^2,g_{\can})\to(N,g)$ satisfying
\eqref{eq:sphere-transport-dimension-two-limit-contraction} and
\eqref{eq:sphere-transport-dimension-two-normalized-pushforward}. The same
argument using the pushforward identity shows that $T$ is surjective. This completes the proof when $\rho=1$. The general case follows by applying the preceding result to $g_\rho:=\rho g$ and using
\begin{align}
        d_{g_\rho}
        =
        \sqrt{\rho}\,d_g,
        \quad
        \dvol_{g_\rho}
        =
        \rho\,\dvol_g,
        \quad
        g^\rho_{\can}
        =
        \rho^{-1}g_{\can}.
\end{align}
\end{proof}

\section{Proof of Theorem~\ref{thm:hemisphere-transport}}
The proof uses the expanding inverse mean curvature flow together with a smooth approximation of the convex set $K$ by the upper halves of symmetric strictly convex hypersurfaces.
\begin{lem}\label{lem:hemisphere-smooth-symmetric-transport}
Let $d\geq2$, identify $\bS^{d+1}$ with the unit sphere in
$\bR^{d+1}\times\bR$, and let
\begin{align}
        R(x,z):=(x,-z).
\end{align}
Let
\begin{align}
        \Sigma^d
        \subset
        (\bS^{d+1},\overline g_{\can})
\end{align}
be a smooth closed strictly convex hypersurface contained in an open
hemisphere and invariant under $R$. Assume that
$\Sigma\cap\{z>0\}$ is connected and set $\Sigma^+ := \Sigma\cap\{z\geq0\}.$ Let $g_\Sigma$ denote the metric induced on $\Sigma$ by
$\overline g_{\can}$. Then there exist an $R$-invariant round equator $E\subset\bS^{d+1}$, a closed hemisphere $E^+\subset E$, and a continuous surjective map
\begin{align}
        P:E^+\to\Sigma^+.
\end{align}
Denoting by $g_E$ the induced round metric on $E$, we have
\begin{align}\label{eq:hemisphere-smooth-half-contraction}
        d_{g_\Sigma}(P(x),P(y))
        \leq
        d_{g_E}(x,y)
        \qquad
        \text{for every }x,y\in E^+,
\end{align}
and
\begin{align}\label{eq:hemisphere-smooth-half-pushforward}
        P_\#\left(
        \frac{1}{\vol_{g_E}(E^+)}
        \dvol_{g_E}|_{E^+}
        \right)
        =
        \frac{1}{\vol_{g_\Sigma}(\Sigma^+)}
        \dvol_{g_\Sigma}|_{\Sigma^+}.
\end{align}
\end{lem}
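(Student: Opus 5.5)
We follow the proof of Theorem~\ref{thm:sphere-transport-dimension-two}, now in general dimension $d$ and keeping track of the reflection $R$. Since $\Sigma$ is smooth, closed, strictly convex and contained in an open hemisphere of $\bS^{d+1}$, its polar hypersurface $M_0$ is smooth and strictly convex. The pair $F(\lambda)=d\big(\sum\lambda_i^{-1}\big)^{-1}$ and $\widetilde F(\kappa)=\frac1d\sum\kappa_i$ satisfies the hypotheses of \cite[Theorem~1.1]{gerhardt2015curvature}. That theorem then gives the inverse mean curvature flow
\[
\partial_tX_t=\frac{d}{H_t}\nu_t,\qquad X_0=\text{inclusion of }\Sigma,
\]
for $0\le t<T_*$. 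The flow stays smooth and strictly convex and converges smoothly, in geodesic graph coordinates, to a round equator $E$.

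\textbf{Invariance under $R$.} The flow is geometric and $R$ is an isometry of $\bar g_{\can}$ preserving $\Sigma$. By uniqueness of the flow, $R\circ X_t=X_t\circ R|_\Sigma$ up to reparametrization. Since the normal speed is intrinsic, the parametrization by normal motion commutes with $R$ exactly. Hence every $\Sigma_t$ is $R$-invariant, and so is the limit equator $E$. I set $E^+:=E\cap\{z\ge0\}$.

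\textbf{Metric and volume.} Exactly as in step (ii) of the previous proof, $\partial_tg_t=\frac{2d}{H_t}h_t\ge0$, so $g_t\ge g_\Sigma$. Taking the trace gives $\dvol_{g_t}=e^{dt}\dvol_{g_\Sigma}$.

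\textbf{Construction of $P_t$.} Near $T_*$, I write $\Sigma_t$ as a graph $Y_t(\omega)=(u_t(\omega),\omega)$ over $E$, using coordinates in which $E=\{r=\pi/2\}$, and set $P_t:=X_t^{-1}\circ Y_t$. The key point is that these graph coordinates can be chosen $R$-equivariant: take the center of the polar coordinates to be a pole of $E$ fixed by $R$. Such a pole exists, since $R$ fixes $E$ and hence fixes or swaps its two poles, and $R$ fixes the hyperplane $\{z=0\}$ pointwise. With this choice $Y_t\circ R=R\circ Y_t$, so $P_t$ commutes with $R$.

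Next I check that $P_t$ maps $E^+$ onto $\Sigma^+$. This is where the connectedness of $\Sigma\cap\{z>0\}$ is used: the flow is continuous in $t$, so it preserves the sign of $z$ on each component. It is cleanest to argue that $z\circ X_t$ never changes sign away from the fixed set $\{z=0\}$. By equivariance, $X_t$ maps $\Sigma\cap\{z=0\}$ into $\{z=0\}$. Since each $X_t$ is a diffeomorphism, it maps each connected component of the complement onto a connected component, and continuity from $t=0$ then forces $\{z>0\}$ to go to $\{z>0\}$.

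Given this, $P_t$ restricts to a diffeomorphism $E^+\to\Sigma^+$. It satisfies $P_t^*g_\Sigma\le\widehat g_t:=Y_t^*\bar g_{\can}$ and $(P_t)_\#\dvol_{\widehat g_t}|_{E^+}=e^{dt}\dvol_{g_\Sigma}|_{\Sigma^+}$.

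\textbf{Main obstacle: the contraction estimate on a half.} From $P_t^*g_\Sigma\le\widehat g_t$ I get $d_{g_\Sigma}(P_t x,P_t y)\le L_{\widehat g_t}(\gamma)$ for any curve $\gamma$ in $E^+$. However, \eqref{eq:hemisphere-smooth-half-contraction} compares with the intrinsic distance $d_{g_E}$ on all of $E$. A minimizing curve between points of $E^+$ stays in $E^+$, since the closed hemisphere $E^+$ is geodesically convex in $E$. To transfer this to the metrics $\widehat g_t$, I would avoid convexity of $E^+$ with respect to $\widehat g_t$ altogether: apply $P_t$ on all of $E$, valued in $\Sigma$, which gives $d_{g_\Sigma}(P_tx,P_ty)\le d_{\widehat g_t}(x,y)$ for all $x,y\in E$. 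Then I restrict to $E^+$ afterwards.

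\textbf{Passage to the limit.} As in step (iii) of the previous proof, Arzel\`a--Ascoli gives a limit $P$ along a sequence $t_j\to T_*$, with $d_{\widehat g_{t_j}}\to d_{g_E}$ uniformly. The mass identity at $T_*$ reads $e^{dT_*}\vol_{g_\Sigma}(\Sigma^+)=\vol_{g_E}(E^+)$, and it yields the normalized pushforward \eqref{eq:hemisphere-smooth-half-pushforward}. The limit map still sends $E^+$ into $\Sigma^+$, since $\Sigma^+$ is closed. Surjectivity follows from the positive density of the target measure, exactly as before.
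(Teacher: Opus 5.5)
There is a genuine gap where you choose $R$-equivariant graph coordinates. You correctly get that $E$ is $R$-invariant, as a limit of the $R$-invariant $\Sigma_t$. But this does not give a pole of $E$ fixed by $R$. Writing $E=x_0^\perp\cap\bS^{d+1}$, invariance only gives $R(x_0)=\pm x_0$. The case $R(x_0)=-x_0$ means $x_0=\pm e_z$ and $E=\{z=0\}\cap\bS^{d+1}$. This case is fully consistent with $R$ fixing $\{z=0\}$ pointwise, so your sentence does not exclude it. In that case:
\begin{itemize}
\item no pole of $E$ is fixed by $R$;
\item the graph parametrization is not equivariant (with center $e_z$, $R$ sends the graph of $u_t$ to the graph of $\pi-u_t$);
\item your $E^+:=E\cap\{z\ge0\}$ is all of $E$, not a hemisphere.
\end{itemize}
So you need an extra input ruling this case out. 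The paper uses the polar contracting flow: it is $R$-invariant and shrinks to a unique point $x_0$. Hence $R(x_0)=x_0$, so $x_0\in\{z=0\}$ and $e_z\in E$. Alternatively, use the convex body $K_0$ bounded by $\Sigma$. It is $R$-invariant, since $R$ must preserve the complementary component of $\Sigma$ that lies in an open hemisphere, and it has nonempty interior. It is contained in the closed hemisphere bounded by $E$, because the enclosed bodies increase along the expanding flow and converge to that hemisphere. If $E$ were $\{z=0\}\cap\bS^{d+1}$, that hemisphere would be $\{z\ge0\}$ or $\{z\le0\}$. Together with $R$-invariance this forces $K_0\subset\{z=0\}$, a contradiction.

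Once $x_0\in\{z=0\}$ is known, you should also check that $Y_t$ preserves the sign of $z$. Writing $Y_t(\omega)=\cos(u_t(\omega))x_0+\sin(u_t(\omega))\omega$, you get $z(Y_t(\omega))=\sin(u_t(\omega))\,z(\omega)$. Combined with your sign argument for $X_t$, this gives $P_t(E\cap\{z\ge0\})=\Sigma^+$ for your specific choice of $E^+$. The paper avoids this check by choosing the component of $E\setminus\operatorname{Fix}(R|_E)$ after the fact.

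The rest of your outline is sound. Your contraction step differs slightly from the paper's and is somewhat cleaner. You use $P_t^*g_\Sigma\le\widehat g_t$ on all of $E$, get $d_{g_\Sigma}(P_tx,P_ty)\le d_{\widehat g_t}(x,y)$, and restrict to $E^+$ afterwards. The paper instead works with $P_t^+$ on $E^+$ and uses geodesic convexity of the round hemisphere $E^+$ to identify its intrinsic distance with $d_{g_E}$.
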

\begin{proof}
Let $\widetilde\Sigma$ denote the polar hypersurface of $\Sigma$. By \cite[Section~4]{gerhardt2015curvature}, $\widetilde\Sigma$ is smooth and strictly convex. Since $R$ is an
isometry and $R(\Sigma)=\Sigma$, the polar hypersurface $\widetilde\Sigma$ is also $R$-invariant. Define
\begin{align}
        F(\lambda_1,\ldots,\lambda_d)
        &:=
        \frac{d}{
        \sum_{i=1}^d\lambda_i^{-1}
        },
        \\
        \widetilde F(\kappa_1,\ldots,\kappa_d)
        &:=
        \frac{1}{d}
        \sum_{i=1}^d\kappa_i.
\end{align}
These curvature functions satisfy the hypotheses of
\cite[Theorem~1.1]{gerhardt2015curvature}. Applying that theorem to the
polar pair $(\widetilde\Sigma,\Sigma)$, we obtain a smooth family of strictly convex
embeddings satisfying
\begin{align}\label{eq:hemisphere-smooth-expanding-flow}
        \partial_tX_t
        &=
        \frac{d}{H_t}\nu_t,
        \qquad
        X(0,p)
        =
        p
        \quad
        \text{for every }p\in\Sigma,
        \qquad
        0\leq t<T_*.
\end{align}
The flow converges smoothly in geodesic graph coordinates to a round
equator $E$ as $t\to T_*$. Set $r:=R|_\Sigma$. By uniqueness of the flow,
\begin{align}\label{eq:hemisphere-smooth-flow-equivariance}
        X_t\circ r
        =
        R\circ X_t.
\end{align}
Let $x_0\in\bS^{d+1}$ denote the extinction point of the polar
contracting flow. Since the polar flow is $R$-invariant and its extinction point is unique, we have
\begin{align}
        R(x_0)
        =
        x_0.
\end{align}
Thus $x_0\in\{z=0\}$. By
\cite[Theorem~1.1]{gerhardt2015curvature},
\begin{align}
        E
        =
        x_0^\perp\cap\bS^{d+1}.
\end{align}
Hence $E$ is $R$-invariant. Moreover, if
$e_z:=(0,1)\in\bR^{d+1}\times\bR$, then $e_z\in E$, so $R|_E$ is a
nontrivial reflection and
\begin{align}
        \operatorname{Fix}(R|_E)
        =
        E\cap\{z=0\}
\end{align}
is a round equator of $E$. For $t$ sufficiently close
to $T_*$, let
\begin{align}
        Y_t:E\to X_t(\Sigma)
\end{align}
be the geodesic graph parametrization. Its uniqueness and
\eqref{eq:hemisphere-smooth-flow-equivariance} imply that
\begin{align}
        Y_t\circ R|_E
        =
        R\circ Y_t.
\end{align}
Set
\begin{align}
        P_t
        :=
        X_t^{-1}\circ Y_t:
        E\to\Sigma.
\end{align}
Consequently, the diffeomorphisms satisfy
\begin{align}
        P_t\circ R|_E
        =
        R|_\Sigma\circ P_t.
\end{align}
Since $P_t$ is an $R$-equivariant diffeomorphism, we have $ P_t(\operatorname{Fix}(R|_E)) = \operatorname{Fix}(R|_\Sigma).$ Furthermore,
\begin{align}
        \Sigma\setminus\operatorname{Fix}(R|_\Sigma)
        =
        \bigl(\Sigma\cap\{z>0\}\bigr)
        \sqcup
        \bigl(\Sigma\cap\{z<0\}\bigr).
\end{align}
The first set on the right-hand side is connected by assumption, and the
second is connected by $R$-symmetry. Thus these are precisely the two
connected components of the complement of the fixed-point set. Let
$E^+$ be the closure of one component of
$E\setminus\operatorname{Fix}(R|_E)$. Since $P_t$ is an
$R$-equivariant diffeomorphism, after interchanging the two choices of
$E^+$ once, we have
\begin{align}\label{eq:hemisphere-smooth-half-image-prelimit}
        P_t(E^+)
        =
        \Sigma^+
\end{align}
for every $t$ sufficiently close to $T_*$. The choice is independent of $t$ because the family $P_t$ depends continuously on $t$ and the two possible component assignments form a
discrete set. Let
\begin{align}
        g_t:=X_t^*\overline g_{\can},
        \qquad
        \widehat g_t:=Y_t^*\overline g_{\can}.
\end{align}
Let $h_t$ denote the second fundamental form and set
\begin{align}
        H_t
        :=
        \operatorname{tr}_{g_t}h_t.
\end{align}
As in the previous section, we have
\begin{align}\label{eq:hemisphere-smooth-metric-volume-evolution}
        \partial_tg_t
        =
        \frac{2d}{H_t}h_t
        \geq0,
        \quad
        \partial_t\dvol_{g_t}
        =
        d\dvol_{g_t}.
\end{align}
Since $g_0=g_\Sigma$, it follows that
\begin{align}\label{eq:hemisphere-smooth-metric-volume-consequences}
        g_t\geq g_\Sigma,
        \quad
        \dvol_{g_t}
        =
        e^{dt}\dvol_{g_\Sigma}.
\end{align}
Since $X_t\circ P_t=Y_t$, we also have $P_t^*g_t=\widehat g_t$. Thus, setting $P_t^+:=P_t|_{E^+}$ and using
\eqref{eq:hemisphere-smooth-half-image-prelimit},
\begin{align}\label{eq:hemisphere-smooth-half-prelimit}
        (P_t^+)^*g_\Sigma
        \leq
        \widehat g_t|_{E^+},
        \quad 
        (P_t^+)_\#\left(
        \dvol_{\widehat g_t}|_{E^+}
        \right)
        =
        e^{dt}\dvol_{g_\Sigma}|_{\Sigma^+}.
\end{align}
Moreover,
\begin{align}
        \widehat g_t
        \rightarrow
        g_E
        \qquad
        \text{in }C^\infty(E),
\end{align}
and, by the second identity in \eqref{eq:hemisphere-smooth-half-prelimit} we get
\begin{align}\label{eq:hemisphere-smooth-half-terminal-factor}
        \vol_{g_E}(E^+)
        =
        e^{dT_*}\vol_{g_\Sigma}(\Sigma^+).
\end{align}
Since $\widehat g_t\to g_E$ smoothly, the metrics $\widehat g_t$ are
uniformly bounded above by a fixed multiple of $g_E$ for $t$ sufficiently
close to $T_*$. The first inequality in
\eqref{eq:hemisphere-smooth-half-prelimit} therefore gives equicontinuity
of $P_t^+$. Hence, after passing to a sequence $t_j\to T_*$,
we have
\begin{align}
        P_{t_j}^+
        \rightarrow P
        \quad
        \text{uniformly on }E^+.
\end{align}
Since $E^+$ is a geodesically convex round hemisphere, the intrinsic
distance induced by $g_E|_{E^+}$ agrees with the restriction of
$d_{g_E}$. Passing to the limit gives \eqref{eq:hemisphere-smooth-half-contraction}. If
$f\in C^0(\Sigma^+)$, then the second line of
\eqref{eq:hemisphere-smooth-half-prelimit} gives
\begin{align}
        \int_{E^+}f(P_{t_j}^+(x))\dvol_{\widehat g_{t_j}}(x)
        =
        e^{dt_j}\int_{\Sigma^+}f\dvol_{g_\Sigma}.
\end{align}
Using the uniform convergence, the smooth convergence of $\widehat g_{t_j}$,
and \eqref{eq:hemisphere-smooth-half-terminal-factor}, we obtain
\eqref{eq:hemisphere-smooth-half-pushforward}. Finally,
$P(E^+)\subset\Sigma^+$ is compact, while the measure on the right-hand side
of \eqref{eq:hemisphere-smooth-half-pushforward} has support equal to
$\Sigma^+$. Hence $P(E^+)=\Sigma^+$.
\end{proof}

\begin{lem}\label{lem:hemisphere-pancake-approximation}
Let $d\geq2$, let $\bS^d_+$ be a closed hemisphere of the equatorial copy
\begin{align}
        \bS^d
        =
        \bS^{d+1}\cap\{z=0\},
\end{align}
and let $K\subset\bS^d_+$ be a closed geodesically convex set satisfying $ \vol_{g_{\can}}(K)>0.$ Then there exist smooth closed strictly convex hypersurfaces $\Sigma_j \subset (\bS^{d+1},\overline g_{\can})$ contained in an open hemisphere and invariant under $R(x,z)=(x,-z)$ such that, setting $ \Sigma_j^+:=\Sigma_j\cap\{z\geq0\},$ we have
\begin{align}\label{eq:hemisphere-pancake-Hausdorff-limit}
        d_H(\Sigma_j^+,K)
        \rightarrow
        0,
\end{align}
and
\begin{align}\label{eq:hemisphere-pancake-measure-limit}
        \dvol_{g_{\Sigma_j}}|_{\Sigma_j^+}
        \wto
        \mathbf{1}_K\dvol_{g_{\can}}
\end{align}
as $j\to \infty$. Moreover, $ \Sigma_j\cap\{z>0\}$ is connected for every $j$. Here $d_H$ denotes the Hausdorff distance between compact subsets of $(\bS^{d+1},\overline g_{\can})$.
\end{lem}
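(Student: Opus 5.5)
We will build $\Sigma_j$ as the boundary of a thin symmetric ``pancake'' around $K$ inside $\bS^{d+1}$. Since $K\subset\bS^d_+$ is closed, geodesically convex, and has positive volume, it has nonempty interior in $\bS^d$. A geodesically convex set in the closed hemisphere with nonempty interior lies, after slightly shrinking toward an interior point, in an open hemisphere of $\bS^d$. So we first pick an interior point $p_0\in K$ and set $K_j:=$ the image of $K$ under the contraction of geodesics toward $p_0$ by a factor $1-1/j$. Each $K_j$ is convex, lies in an open hemisphere, satisfies $K_j\subset\operatorname{int}K$ up to boundary, and $K_j\to K$ in Hausdorff distance with $\mathbf 1_{K_j}\to\mathbf 1_K$ in $L^1$, because $\partial K$ is $\vol$-null for convex sets. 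Next we use the central projection (gnomonic chart) from the center of an open hemisphere of $\bS^{d+1}$ containing $K_j$ and centered at a point of $\{z=0\}$. This chart carries geodesically convex sets to Euclidean convex sets, and it is $R$-equivariant with $R$ acting as the reflection $z\mapsto -z$ on $\bR^{d+1}$.

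In the chart $K_j$ becomes a compact convex body $C_j\subset\bR^d\times\{0\}$. Take the Euclidean convex body
\begin{align}
        D_{j,\delta}:=C_j\times[-\delta,\delta]
\end{align}
and smooth it. Concretely, take $\{x:\ \phi(x)\le 1\}$ where $\phi$ is a mollified gauge function of $D_{j,\delta}$ plus a small multiple of $|x|^2$. Symmetrizing $\phi$ under $R$ gives a smooth, strictly convex, $R$-invariant body $D'_{j,\delta}$ which is $\delta$-close in Hausdorff distance to $D_{j,\delta}$. Strict convexity, meaning positive definite second fundamental form, is preserved by gnomonic projection. Therefore $\Sigma:=$ the preimage of $\partial D'_{j,\delta}$ is a smooth closed strictly convex $R$-invariant hypersurface in an open hemisphere. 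Its upper part $\Sigma\cap\{z>0\}$ is a graph over the interior of the projection of $D'$ to $\{z=0\}$, hence homeomorphic to a ball and connected. Choosing $\delta=\delta_j\to0$ diagonally then gives the Hausdorff convergence $d_H(\Sigma_j^+,K)\to0$, since $\Sigma^+$ is within $O(\delta)$ of the top face $C_j\times\{\delta\}$ pulled back.

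The main obstacle is the weak convergence of the surface measures. Our plan is to write $\Sigma_j^+$ as the graph $z=u_j(x)$ over its projection $\Omega_j\subset\bS^d$, using geodesic graph coordinates normal to $\{z=0\}$. The measure $\dvol_{g_{\Sigma_j}}|_{\Sigma_j^+}$ then splits into two pieces. The first is the part where $|\nabla u_j|\le\eta$, which pushes forward to $(1+O(\eta)+O(\delta_j))\mathbf 1\,\dvol_{g_\can}$ on a set converging to $K$ in $L^1$. The second is the ``rim'' part near $\partial\Omega_j$. We must show its area tends to zero. Convexity does this via monotonicity of area under inclusion of convex bodies, which we can use in the gnomonic chart up to bounded distortion: the rim part of $\Sigma_j^+$ is contained in a $C\delta_j$-neighborhood of $\partial C_j\times[-\delta_j,\delta_j]$. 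The boundary of a convex body trapped in such a thin cylindrical shell has lateral area at most $C\,\delta_j\,\mathcal H^{d-1}(\partial C_j)$. This follows by comparing, via Cauchy's formula/monotonicity of quermassintegrals, the surface area of $D'$ with that of the outer enlarged cylinder. Finally we test against $f\in C^0(\bS^{d+1})$. Uniform continuity of $f$, the vanishing rim mass, and the $L^1$ convergence $\mathbf 1_{\Omega_j}\to\mathbf 1_K$ together give the weak convergence $\dvol_{g_{\Sigma_j}}|_{\Sigma_j^+}\wto\mathbf 1_K\dvol_{g_\can}$.
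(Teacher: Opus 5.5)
Your first step fails as stated. The geodesic contraction $x\mapsto\exp_{p_0}\bigl((1-\tfrac1j)\exp_{p_0}^{-1}x\bigr)$ toward an arbitrary interior point $p_0$ does not preserve geodesic convexity on the sphere. So neither $K_j$ nor $C_j=\Gamma(K_j)$ need be convex, and everything built on them (the gauge-function smoothing, strict convexity of $D'_{j,\delta}$, the graph description of the upper sheet) loses its footing.

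For a concrete failure, take $K=\bS^d_+$ and let $p_0$ be at small distance $\alpha$ from $\partial K$. Restrict to a totally geodesic $\bS^2$ through $p_0$ and $N$; the contraction commutes with this restriction. In geodesic polar coordinates $(r,\phi)$ about $p_0$, $\partial K$ is a curve $r=R(\phi)$ whose farthest point from $p_0$ lies at distance $\beta=\pi-\alpha>\pi/2$. At that point $R'=0$, and since $\partial K$ is a geodesic, $R''=\sin\beta\cos\beta$. The contracted boundary $r=\lambda R(\phi)$ then has geodesic curvature at the image point, with respect to the normal pointing toward $p_0$, equal to
\[
\frac{\sin(2\lambda\beta)-\lambda\sin(2\beta)}{2\sin^2(\lambda\beta)} .
\]
The numerator vanishes at $\lambda=1$ and has $\lambda$-derivative $2\beta\cos(2\beta)-\sin(2\beta)>0$ there when $\beta$ is close to $\pi$. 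Hence it is negative for $\lambda=1-1/j$ with $j$ large; for instance $\beta=0.9\pi$, $\lambda=0.95$ gives a numerator of about $-0.23$. By continuity the same happens for caps $\overline{B}(N,r)$ with $r<\pi/2$ close to $\pi/2$.

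The trouble comes from points of $K$ at distance more than $\pi/2$ from $p_0$, which you cannot exclude for general $K$ and arbitrary $p_0$. The simplest repair is the paper's truncation $K_j:=K\cap\{x\cdot N\ge\delta_j\}$. This is an intersection of two geodesically convex sets and lies in the open hemisphere $\{x\cdot N>0\}$. It converges to $K$ in Hausdorff distance because every equatorial point $q\in K$ is a limit of points $q_s\in K$ on the minimizing geodesic from an interior point $p$ to $q$, and these satisfy $q_s\cdot N>0$.

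Once this is fixed, the rest of your argument is sound, and it differs genuinely from the paper in the thickening and measure steps. The paper smooths $A_j=\Gamma(K_j)$ inside $\R^d$ to $A_j'=\{\rho_j\le0\}$ and thickens by $\{\rho_j(x)+z^2/\veps^2\le0\}$. The upper sheet is then the explicit graph $z=\veps\sqrt{-\rho_j(x)}$, whose spherical Jacobian is dominated by $C_j\bigl(1+\dist(x,\partial A_j')^{-1/2}\bigr)$, so measure convergence is plain dominated convergence.

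You instead smooth the slab $C_j\times[-\delta,\delta]$ in $\R^{d+1}$ and control the rim by monotonicity of surface area under inclusion of convex bodies. The area of $\partial D'$ is at most that of a slightly enlarged cylinder, which tends to $2\vol_d(C_j)$. The two nearly flat sheets over $\{|\nabla u|\le\eta\}$, where $u$ is the upper graph function over the projection $P$ of $D'$, already carry at least $2\vol_d(\{|\nabla u|\le\eta\})$, which also tends to $2\vol_d(C_j)$. So the rim area goes to zero in the chart, hence on the sphere up to bounded distortion. Your route avoids any Jacobian computation at the cost of invoking Cauchy-type monotonicity; the paper's route is fully explicit.

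One small correction: for concave $u\ge0$ with $\sup u\le C\delta$ you only get $|\nabla u(x)|\le C\delta/\dist(x,\partial P)$. So the rim $\{|\nabla u|>\eta\}$ lies over a $C\delta/\eta$-neighbourhood of $\partial P$, not a $C\delta$-neighbourhood. This is harmless, since $\eta$ is fixed before $\delta\to0$.
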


\begin{proof}
We first construct an approximation of $K$ that is away from the boundary of the hemisphere. Let $N\in\bS^d$ denote the pole of the hemisphere $\bS^d_+$, so that
\begin{align}
        \bS^d_+
        =
        \{x\in\bS^d:x\cdot N\geq0\}.
\end{align}
Since $K$ is geodesically convex and has positive $d$-dimensional volume,
it has nonempty interior. Choose $p\in\operatorname{int}(K)$. Since $K\subset\bS^d_+$, we have
$p\cdot N>0$. For each $j\geq1$, set
\begin{align}
        \delta_j
        :=
        \frac{p\cdot N}{j+1},
        \qquad
        K_j
        :=
        K\cap\{x\cdot N\geq\delta_j\}.
\end{align}
Then the sets $K_j$ are compact, increasing, geodesically convex, and
contained in the open hemisphere $\{x\cdot N>0\}$. Moreover,
$p\in\operatorname{int}(K_j)$ for every $j\geq1$. We claim that
\begin{align}
        \overline{\bigcup_{j=1}^\infty K_j}
        =
        K.
\end{align}
Indeed, every $q\in K$ satisfying $q\cdot N>0$ belongs to $K_j$ for all
sufficiently large $j$. If instead $q\cdot N=0$, then $p$ and $q$ are not antipodal since $p\cdot N>0$. Thus, if we define, for $0\leq s<1$,
\begin{align}
        q_s
        :=
        \frac{(1-s)p+sq}{|(1-s)p+sq|},
\end{align}
then the point $q_s$ lies on the minimizing geodesic segment from $p$ to $q$ and
therefore belongs to $K$ by geodesic convexity. Furthermore,
\begin{align}
        q_s\cdot N
        =
        \frac{(1-s)p\cdot N}{|(1-s)p+sq|}
        >
        0,
\end{align}
so $q_s\in K_j$ for all sufficiently large $j$. Since $q_s\to q$ as
$s\to1$, the claim follows. Since $K_j\subset K$, the preceding claim and the compactness of $K$ imply
\begin{align}
        d_H(K_j,K)
        \rightarrow
        0
\end{align}
as $j\to \infty.$ Moreover, as $j\to \infty$
\begin{align}
        \mathbf{1}_{K_j}
        \rightarrow
        \mathbf{1}_K
        \quad
        \dvol_{g_{\can}}\text{-almost everywhere},
\end{align}
because the equator $\{x\cdot N=0\}$ has zero $d$-dimensional volume. Consequently, by the dominated convergence theorem,
\begin{align}\label{eq:hemisphere-pancake-truncation-limit}
        \mathbf{1}_{K_j}\dvol_{g_{\can}}
        \wto
        \mathbf{1}_K\dvol_{g_{\can}}.
\end{align}
Next, we further approximate the sets $K_j$ by convex sets with smooth uniformly
convex boundaries. Regard $N$ also as the point $(N,0)\in\bS^{d+1}\subset\bR^{d+1}\times\bR$,
and set $\cH_N:=\{Y\in\bS^{d+1}:Y\cdot N>0\}.$ Identify
\begin{align}
        T_N\bS^{d+1}
        =
        T_N\bS^d\oplus\operatorname{span}\{\partial_z\}
        \simeq
        \bR^d\times\bR.
\end{align}
Define the gnomonic projection based at $N$ by
\begin{align}
        \Gamma:\cH_N
        \rightarrow
        T_N\bS^{d+1},\quad 
        \Gamma(Y)
        :=
        \frac{Y}{Y\cdot N}-N.
\end{align}
Its inverse is
\begin{align}
        G:T_N\bS^{d+1}
        \rightarrow
        \cH_N,\quad 
        G(V)
        :=
        \frac{N+V}{\sqrt{1+|V|^2}}.
\end{align}
The restriction of $\Gamma$ to
$\cH_N\cap\{z=0\}$ takes values in
$T_N\bS^d\simeq\bR^d$. Since
\begin{align}
        K_j
        \subset
        \{x\in\bS^d:x\cdot N\geq\delta_j\}
        \subset
        \cH_N,
\end{align}
we may define
\begin{align}
        A_j
        :=
        \Gamma(K_j)
        \subset
        T_N\bS^d
        \simeq
        \bR^d.
\end{align}
Because the gnomonic projection maps geodesic segments contained in
$\cH_N$ onto Euclidean line segments, $A_j$ is compact and convex.
Furthermore, since $\Gamma$ is a diffeomorphism and $K_j$ has nonempty
interior, $A_j$ has nonempty interior. We next smooth $A_j$. Let
\begin{align}
        h_j(u)
        :=
        \sup_{x\in A_j}x\cdot u,
        \quad 
        f_j(x)
        :=
        \max_{u\in\bS^{d-1}}\bigl(x\cdot u-h_j(u)\bigr).
\end{align}
Then $f_j$ is convex and $1$-Lipschitz,
\begin{align}
        A_j
        =
        \{f_j\leq0\},
        \qquad
        \max\{f_j,0\}
        =
        \dist(\cdot,A_j).
\end{align}
Fix a nonnegative standard mollifier $\varphi_\eta$ supported in
$B_\eta(0)$ and satisfying $ \int_{\bR^d}\varphi_\eta\,\ud x = 1$. Choose $R_j$ such that $A_j\subset B_{R_j}(0)$, and define
\begin{align}
        F_{j,\eta}
        :=(f_j*\varphi_\eta)+\eta|x|^2,
        \quad 
        c_{j,\eta}
        :=2\eta(1+R_j^2),
        \\
        A_{j,\eta}
        :=\{F_{j,\eta}\leq c_{j,\eta}\},
        \quad 
        \rho_{j,\eta}
        :=F_{j,\eta}-c_{j,\eta}.
\end{align}
Since $\|f_j*\varphi_\eta-f_j\|_{C^0}\leq\eta$, we have
\begin{align}\label{eq:hemisphere-pancake-smooth-approximation}
        A_j
        \subset
        A_{j,\eta}
        \subset
        A_j+(c_{j,\eta}+\eta)\overline B_1(0),
        \qquad
        D^2\rho_{j,\eta}
        \geq
        2\eta I.
\end{align}
Since $f_j<0$ at every interior point of $A_j$, for all sufficiently small
$\eta>0$ the level $c_{j,\eta}$ lies strictly above the unique minimum of
the strictly convex function $F_{j,\eta}$. Hence $\nabla\rho_{j,\eta}\neq0$ on $\partial A_{j,\eta}$. Therefore, $A_{j,\eta}$ is a smooth uniformly convex body and
$d_H(A_{j,\eta},A_j)\to0$ as $\eta\to0$. Set
\begin{align}
        S_{j,\eta}
        :=
        G(A_{j,\eta}\times\{0\}).
\end{align}
Let $G_0(v)=G(v,0)$ for $v\in T_N \bS^{d}.$ For $v,\xi\in T_N\bS^d$, we have
\begin{align}
        d(G_0)_v(\xi)
        =
        \frac{(\xi,0)}{\sqrt{1+|v|^2}}
        -
        \frac{\la v,\xi\ra(N+v,0)}
        {(1+|v|^2)^{3/2}}.
\end{align}
It follows that
\begin{align}
        (G_0^*g_{\can})_v(\xi,\zeta)
        =
        \frac{\la\xi,\zeta\ra}{1+|v|^2}
        -
        \frac{\la v,\xi\ra\la v,\zeta\ra}
        {(1+|v|^2)^2}.
\end{align}
Therefore,
\begin{align}\label{eq:hemisphere-pancake-gnomonic-Jacobian}
        G_0^*\dvol_{g_{\can}}
        =
        J_{G_0}(v)\,\ud v,
        \qquad
        J_{G_0}(v)
        :=
        (1+|v|^2)^{-\frac{d+1}{2}}.
\end{align}

Fix $j$. Since
$d_H(A_{j,\eta},A_j)\to0$, all the sets $A_{j,\eta}$ with $\eta>0$
sufficiently small are contained in a fixed compact subset of
$T_N\bS^d$. The map $G_0$ is Lipschitz on this compact set, and hence
\begin{align}
        d_H(S_{j,\eta},K_j)
        \leq
        C_jd_H(A_{j,\eta},A_j)
        \rightarrow
        0,
\end{align}
as $\eta\to 0$. Moreover, by
\eqref{eq:hemisphere-pancake-smooth-approximation}, we have
\begin{align}
        A_j
        \subset
        A_{j,\eta}
        \subset
        A_j+r_{j,\eta}\overline B_1(0),
        \qquad
        r_{j,\eta}
        :=
        c_{j,\eta}+\eta
        \rightarrow
        0,
\end{align}
as $\eta\to 0.$ Consequently, as $\eta \to 0$
\begin{align}
        \mathbf{1}_{A_{j,\eta}}(v)
        \rightarrow
        \mathbf{1}_{A_j}(v)
        \quad
        \text{for every }v\in T_N\bS^d.
\end{align}
Let $f\in C^0(\bS^{d+1})$. Using
\eqref{eq:hemisphere-pancake-gnomonic-Jacobian}, we obtain
\begin{align}
        \int_{S_{j,\eta}}f\,\dvol_{g_{\can}}
        =
        \int_{T_N\bS^d}
        f(G_0(v))
        \mathbf{1}_{A_{j,\eta}}(v)
        J_{G_0}(v)\,\ud v.
\end{align}
Thus, the dominated convergence theorem gives
\begin{align}
        \int_{S_{j,\eta}}f\,\dvol_{g_{\can}}
        \rightarrow
        \int_{T_N\bS^d}
        f(G_0(v))
        \mathbf{1}_{A_j}(v)
        J_{G_0}(v)\,\ud v
        =
        \int_{K_j}f\,\dvol_{g_{\can}}.
\end{align}
Therefore, $ \mathbf{1}_{S_{j,\eta}}\dvol_{g_{\can}} \wto \mathbf{1}_{K_j}\dvol_{g_{\can}}$ as $\eta \to 0$. Choose $0<\eta_j<1/j$ diagonally so that, setting
\begin{align}
        A'_j
        :=
        A_{j,\eta_j},
        \quad 
        \rho_j
        :=
        \rho_{j,\eta_j},
        \quad 
        S_j
        :=
        S_{j,\eta_j},
\end{align}
we have as $j\to \infty$
\begin{align}\label{eq:hemisphere-pancake-smooth-limit}
        d_H(S_j,K)
        \rightarrow
        0,
        \quad
        \mathbf{1}_{S_j}\dvol_{g_{\can}}
        \wto
        \mathbf{1}_K\dvol_{g_{\can}}.
\end{align}
We now thicken each $S_j$ symmetrically in the additional $z$-direction
to obtain a smooth closed strictly convex hypersurface in $\bS^{d+1}$. Here the reader should think about the simple case of the unit interval $[-1,1]$ which can be fattened to obtain an ellipse that collapses onto the plane from upper and lower side. Thus, for $\varepsilon>0$, define
\begin{align}
        B_{j,\varepsilon}
        :=
        \left\{
        (x,z)\in\bR^d\times\bR:
        \rho_j(x)+\frac{z^2}{\varepsilon^2}\leq0
        \right\}.
\end{align}
The Hessian of the function $\rho_j+z^2/\veps^2$ is
\begin{align}
        \begin{pmatrix}
        D^2\rho_j&0\\
        0&2\varepsilon^{-2}
        \end{pmatrix}
        >
        0,
\end{align}
and its gradient does not vanish on the zero level. Thus
$\widehat\Sigma_{j,\varepsilon}:=\partial B_{j,\varepsilon}$ is a smooth
closed strictly convex Euclidean hypersurface invariant under
$(x,z)\mapsto(x,-z)$. Now observe that if $\zeta$ is the Euclidean unit normal to
$\widehat\Sigma_{j,\varepsilon}$ at $y$, then for tangent vectors $v,w$,
\begin{align}
        \operatorname{II}_{\bS^{d+1}}(dG_y(v),dG_y(w))
        =
        \frac{
        \operatorname{II}_{\bR^{d+1}}(v,w)
        }{
        \sqrt{1+|y|^2}
        \sqrt{1+(y\cdot\zeta)^2}
        }.
\end{align}
Thus, the map $G$ preserves positivity of the second fundamental form and therefore, $\Sigma_{j,\varepsilon} := G(\widehat\Sigma_{j,\varepsilon})$ is a smooth closed strictly convex hypersurface in an open hemisphere and is
invariant under $R$.

The upper half of $\widehat\Sigma_{j,\varepsilon}$ is the graph of $ z_{j,\varepsilon}(x)= \varepsilon\sqrt{-\rho_j(x)}$ for $x\in A_j'$ and hence $\Sigma_{j,\varepsilon}^+$ is parametrized by
\begin{align}
        \Psi_{j,\varepsilon}:\operatorname{int}(A'_j)
        \rightarrow
        \Sigma_{j,\varepsilon}\cap\{z>0\},
        \quad 
        \Psi_{j,\varepsilon}(x)
        :=
        G\bigl(x,z_{j,\varepsilon}(x)\bigr).
\end{align}
The same formula extends $\Psi_{j,\varepsilon}$ continuously to $A'_j$;
we continue to denote this extension by $\Psi_{j,\varepsilon}$. Since
$G$ preserves the sign of the last coordinate, we have
\begin{align}
        \Psi_{j,\varepsilon}
        \left(
        \operatorname{int}(A'_j)
        \right)
        =
        \Sigma_{j,\varepsilon}\cap\{z>0\},
        \quad
        \Psi_{j,\varepsilon}(A'_j)
        =
        \Sigma_{j,\varepsilon}^+.
\end{align}
 We have
\begin{align}
        \widehat\Sigma_{j,\varepsilon}\cap\{z>0\}
        =
        \{
        (x,\varepsilon\sqrt{-\rho_j(x)}):
        x\in\operatorname{int}(A'_j)
        \}.
\end{align}
Since $A'_j$ is convex, its interior is connected. The inverse gnomonic
projection $G$ preserves the sign of the $z$-coordinate, and hence
$\Sigma_{j,\varepsilon}\cap\{z>0\}$ is connected. Since $\rho_j=0$ and $\nabla\rho_j\neq0$ on $\partial A'_j$, there exist
constants $c_j,C_j,r_j>0$ such that
\begin{align}\label{eq:hemisphere-pancake-defining-function-distance}
        c_j\dist(x,\partial A'_j)
        \leq
        -\rho_j(x)
        \leq
        C_j\dist(x,\partial A'_j)
\end{align}
whenever $x\in A'_j$ and
$\dist(x,\partial A'_j)<r_j$. Moreover,
for $x\in\operatorname{int}(A'_j)$,
\begin{align}
        \nabla z_{j,\varepsilon}(x)
        =
        -\frac{\varepsilon\nabla\rho_j(x)}
        {2\sqrt{-\rho_j(x)}}.
\end{align}
Consequently, after increasing $C_j$ if necessary,
\begin{align}\label{eq:hemisphere-pancake-graph-gradient-bound}
        |\nabla z_{j,\varepsilon}(x)|
        \leq
        \frac{C_j\varepsilon}
        {\sqrt{\dist(x,\partial A'_j)}}
        \leq
        \frac{C_j}
        {\sqrt{\dist(x,\partial A'_j)}}
\end{align}
for $0<\varepsilon\leq1$. In particular, for every fixed
$x\in\operatorname{int}(A'_j)$, $z_{j,\varepsilon}(x)
        \rightarrow
        0,
        |\nabla z_{j,\varepsilon}(x)|
        \rightarrow
        0$ as $\varepsilon\to0$. Since $A'_j$ is compact, $\sup_{x\in A'_j}|z_{j,\varepsilon}(x)| \leq C_j\varepsilon.$ The map $G$ is Lipschitz on a fixed compact set containing all the graphs
for $0<\varepsilon\leq1$. Recalling that $ S_j
        =
        G(A'_j\times\{0\}),$ we obtain
\begin{align}\label{eq:hemisphere-pancake-upper-sheet-Hausdorff}
        d_H(\Sigma_{j,\varepsilon}^+,S_j)
        \leq
        \sup_{x\in A'_j}
        d_{\overline g_{\can}}
        \left(
        G\bigl(x,z_{j,\varepsilon}(x)\bigr),
        G(x,0)
        \right)
        \leq
        C_j\varepsilon
        \rightarrow
        0,
\end{align}
as $\varepsilon\to 0.$ We next prove convergence of the volume measures. First note that, for $y\in T_N\bS^{d+1}$ and $v,w\in T_N\bS^{d+1}$
\begin{align}
        (G^*\overline g_{\can})_y(v,w)
        =
        \frac{\la v,w\ra}{1+|y|^2}
        -
        \frac{\la y,v\ra\la y,w\ra}
        {(1+|y|^2)^2}.
\end{align}
Applying this formula with
\begin{align}
        y
        =
        \bigl(x,z_{j,\varepsilon}(x)\bigr),
        \qquad
        v
        =
        \bigl(\xi,\nabla z_{j,\varepsilon}(x)\cdot\xi\bigr),
\end{align}
and computing the determinant of the induced metric gives
\begin{align}\label{eq:hemisphere-pancake-spherical-graph-Jacobian}
        J_{j,\varepsilon}(x)
        :=
        \sqrt{
        \det\bigl(\Psi_{j,\varepsilon}^*
        \overline g_{\can}\bigr)
        }
        =
        \frac{
        \sqrt{
        1+|\nabla z_{j,\varepsilon}(x)|^2
        +
        \left(
        z_{j,\varepsilon}(x)
        -
        x\cdot\nabla z_{j,\varepsilon}(x)
        \right)^2
        }
        }{
        \left(
        1+|x|^2+z_{j,\varepsilon}(x)^2
        \right)^{\frac{d+1}{2}}
        }.
\end{align}
Thus, for every $x\in\operatorname{int}(A'_j)$, $ J_{j,\varepsilon}(x) \to \frac{1}{(1+|x|^2)^{\frac{d+1}{2}}}$ as $\varepsilon \to 0.$ Since $A'_j$ is bounded, it follows from \eqref{eq:hemisphere-pancake-graph-gradient-bound} that
\begin{align}
        J_{j,\varepsilon}(x)
        \leq
        C_j ( 1+
        \dist(x,\partial A'_j)^{-1/2}).
\end{align}
The right-hand side is integrable on $A'_j$.  Indeed, in a smooth collar neighborhood of the boundary we have
\begin{align}
        \int_0^{r_j}s^{-1/2}\,\ud s
        <
        \infty.
\end{align}
Since $\partial A'_j$ has zero $d$-dimensional measure, the area formula
on $\operatorname{int}(A'_j)$ gives for any $f\in C^0(\bS^{d+1})$
\begin{align}
        \int_{\Sigma_{j,\varepsilon}^+}
        f\,\dvol_{g_{\Sigma_{j,\varepsilon}}}
        =
        \int_{A'_j}
        f\bigl(\Psi_{j,\varepsilon}(x)\bigr)
        J_{j,\varepsilon}(x)\,\ud x.
\end{align}
Thus the dominated convergence theorem implies
\begin{align}
        \int_{\Sigma_{j,\varepsilon}^+}
        f\,\dvol_{g_{\Sigma_{j,\varepsilon}}}
        \rightarrow
        \int_{A'_j}
        f(G(x,0))
        \frac{\ud x}{(1+|x|^2)^{\frac{d+1}{2}}}
        =
        \int_{S_j}f\,\dvol_{g_{\can}}.
\end{align}
Therefore, $ \dvol_{g_{\Sigma_{j,\varepsilon}}} |_{\Sigma_{j,\varepsilon}^+} \wto \mathbf{1}_{S_j}\dvol_{g_{\can}}$ as $\veps\to 0.$ Choosing $0<\varepsilon_j<1/j$ diagonally and setting $\Sigma_j:=\Sigma_{j,\varepsilon_j}$, the conclusion follows from
\eqref{eq:hemisphere-pancake-smooth-limit}.
\end{proof}

\begin{proof}[Proof of Theorem~\ref{thm:hemisphere-transport}]
Apply Lemma~\ref{lem:hemisphere-pancake-approximation} to $K$. For each $j$,
apply Lemma~\ref{lem:hemisphere-smooth-symmetric-transport} to $\Sigma_j$, and
choose a round isometry from $\bS^d_+$ onto the resulting source hemisphere.
We obtain continuous surjective maps $T_j:\bS^d_+\to\Sigma_j^+$ such that
\begin{align}\label{eq:hemisphere-prelimit-map}
        d_{\overline g_{\can}}(T_j(x),T_j(y))
        \leq
        d_{g_{\can}}(x,y),  \quad 
        (T_j)_\#\sigma_+
        =
        \frac{1}{\vol_{g_{\Sigma_j}}(\Sigma_j^+)}
        \dvol_{g_{\Sigma_j}}|_{\Sigma_j^+}.
\end{align}
Here the first inequality follows from
\eqref{eq:hemisphere-smooth-half-contraction}, since ambient distance is
bounded by intrinsic distance on $\Sigma_j$.

By the Arzel\`a--Ascoli theorem, after passing to a subsequence,
$T_j\to T$ uniformly on $\bS^d_+$. The Hausdorff convergence in
\eqref{eq:hemisphere-pancake-Hausdorff-limit} gives $ T(\bS^d_+) \subset K$. Passing to the limit in the first line of \eqref{eq:hemisphere-prelimit-map}, and using that ambient distance agrees with round distance on the equatorial copy of $\bS^d$, proves
\eqref{eq:hemisphere-transport-contraction}. 

Testing \eqref{eq:hemisphere-pancake-measure-limit} against the constant
function $1$, we obtain as $j\to \infty$
\begin{align}\label{eq:hemisphere-pancake-mass-limit}
        \vol_{g_{\Sigma_j}}(\Sigma_j^+)
        \rightarrow
        \vol_{g_{\can}}(K).
\end{align}
Let $f\in C^0(\bS^{d+1})$. By
\eqref{eq:hemisphere-prelimit-map}, we have
\begin{align}
        \int_{\bS^d_+}
        f(T_j(x))\,\ud\sigma_+(x)
        =
        \frac{1}{\vol_{g_{\Sigma_j}}(\Sigma_j^+)}
        \int_{\Sigma_j^+}
        f\,\dvol_{g_{\Sigma_j}}.
\end{align}
Using the uniform convergence $T_j\to T$,
\eqref{eq:hemisphere-pancake-measure-limit}, and
\eqref{eq:hemisphere-pancake-mass-limit}, we may let $j\to\infty$ to obtain
\begin{align}
        \int_{\bS^d_+}
        f(T(x))\,\ud\sigma_+(x)
        =
        \frac{1}{\vol_{g_{\can}}(K)}
        \int_K f\,\dvol_{g_{\can}}.
\end{align}
By the definition of $\sigma_+$, this is \eqref{eq:hemisphere-transport-pushforward}. Finally, since $K$ is geodesically convex and has nonempty interior, we have $K=\overline{\operatorname{int}(K)}.$ Hence the measure on the right-hand side of \eqref{eq:hemisphere-transport-pushforward} has support equal to $K$. Since $T(\bS^d_+)$ is compact, it follows that $T(\bS^d_+)=K.$
\end{proof}
\section{Proof of Theorem~\ref{thm:gaussian-main}}
The idea behind Theorem~\ref{thm:gaussian-main} is to deform the Euclidean
metric in polar coordinates.  Recall that the standard metric on \(\mathbb R^d\)
is
\[
        \ud r^2+r^2g_{\bbS^{d-1}},
\]
so the spherical slices \(\{r=\mathrm{constant}\}\) have radius \(r\) and keep
expanding as \(r\) increases.  In our construction, we instead choose a
rotationally symmetric metric for which these slices expand in the usual way
near the origin, so that the metric closes smoothly, but then their radius
becomes constant and equal to \(\sqrt{d-2}\).  Thus the end of the manifold is
cylindrical, with cross-section \(\bbS^{d-1}(\sqrt{d-2})\).  This cylinder is the
source of the spectral obstruction: its cross-section has first nonzero
eigenvalue
\begin{align}
        \frac{d-1}{d-2}<2,
\end{align}
attained by the \(d\) coordinate functions restricted to
\(\bbS^{d-1}(\sqrt{d-2})\), while the cylindrical direction produces one
additional low-energy radial mode.  Finally, we put a quadratic potential in the
cylindrical direction, so that the end carries a Gaussian weight.  The weight makes the total measure of the space finite and provides enough
curvature for the curvature--dimension condition
\(\operatorname{CD}(1,\infty)\). Thus, the constructed weighted
manifold satisfies the curvature-dimension lower bound, but its cylindrical end
produces \(d+1\) nonzero test functions with Rayleigh quotient below the
Gaussian threshold \(2\), forcing
\begin{align}
        \lambda_{d+2}(\mathbb R^d,g,\mu)
        <
        2
        =
        \lambda_{d+2}(\mathbb R^d,|\cdot|,\gamma^d).
\end{align}
Before proceeding with the proof of Theorem~\ref{thm:gaussian-main}, we record a
technical smoothing lemma used to construct the torpedo function (cf. \cite{botvinnik2010homotopy}).
\begin{lem}\label{lem:theta-eps}
Let $\delta>0$, $B=\frac{\pi\delta}{2}$, and $0<\veps<B/2$. There exists a smooth function
$\te_\veps:[0,\infty)\to[0,\pi/2]$ such that
\begin{align}
        \te_\veps(r)
        &=
        \frac{r}{\delta}
        \qquad\text{when }0\leq r\leq B-\veps,\\
        \te_\veps(r)
        &=
        \frac{\pi}{2}
        \qquad\text{when }r\geq B+\veps,
\end{align}
with $\te_\veps'\geq0$, $\te_\veps'\leq\delta^{-1}$, and $\te_\veps''\leq0$.
\end{lem}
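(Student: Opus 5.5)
The plan is to obtain $\te_\veps$ by integrating a mollified version of the derivative of the piecewise linear function $\te_0(r):=\min\{r/\delta,\pi/2\}$. Note that $\te_0(B)=\pi/2$ since $B=\pi\delta/2$, and that $\te_0'=\delta^{-1}\mathbf{1}_{[0,B)}$. I will choose a smooth nonincreasing function $\psi_\veps:[0,\infty)\to[0,\delta^{-1}]$ with the following properties:
\begin{align}
        \psi_\veps
        &=
        \delta^{-1}
        \quad\text{on }[0,B-\veps],
        \qquad
        \psi_\veps=0
        \quad\text{on }[B+\veps,\infty),
\end{align}
and $\int_0^\infty\psi_\veps\,\ud r=\pi/2$. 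Then I will define $\te_\veps(r):=\int_0^r\psi_\veps(s)\,\ud s$.

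To construct $\psi_\veps$, I first take a smooth nonincreasing cutoff $\chi:\bR\to[0,1]$ with $\chi=1$ on $(-\infty,-1]$ and $\chi=0$ on $[1,\infty)$. I also require the antisymmetry $\chi(s)+\chi(-s)=1$. Such a $\chi$ exists: take any smooth nonincreasing cutoff $\chi_1$ with the two plateau conditions and set $\chi(s)=\tfrac12(\chi_1(s)+1-\chi_1(-s))$. Then I put $\psi_\veps(r):=\delta^{-1}\chi\bigl((r-B)/\veps\bigr)$. Since $\veps<B/2$, we have $B-\veps>0$, so $\psi_\veps$ equals $\delta^{-1}$ on a neighborhood of $r=0$. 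The antisymmetry of $\chi$ gives
\begin{align}
        \int_{B-\veps}^{B+\veps}\psi_\veps\,\ud r=\delta^{-1}\veps.
\end{align}

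The required properties then follow in order.
\begin{enumerate}
        \item On $[0,B-\veps]$ the integrand is $\delta^{-1}$, so $\te_\veps(r)=r/\delta$ there.
        \item For $r\geq B+\veps$ we get
        \begin{align}
                \te_\veps(r)=\delta^{-1}(B-\veps)+\delta^{-1}\veps=\delta^{-1}B=\pi/2.
        \end{align}
        \item $\te_\veps'=\psi_\veps\in[0,\delta^{-1}]$.
        \item $\te_\veps''=\psi_\veps'\leq0$, because $\chi$ is nonincreasing.
        \item Since $\te_\veps$ is nondecreasing with $\te_\veps(0)=0$ and terminal value $\pi/2$, its range lies in $[0,\pi/2]$.
        \item Smoothness on $[0,\infty)$, including one-sided smoothness at $0$, is clear because $\te_\veps$ is linear near $0$.
\end{enumerate}

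The only point that needs care is the mass-balance condition, which is what makes the plateau value exactly $\pi/2$. The symmetrization of the cutoff handles this. Alternatively, one could fix a generic cutoff and shift its transition point slightly, using the intermediate value theorem, to match the integral exactly; the transition interval must stay inside $[B-\veps,B+\veps]$, which works because the integral depends continuously and monotonically on the shift.
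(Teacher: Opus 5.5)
Your proposal is correct and follows essentially the same route as the paper, which also builds $\te_\veps$ as the primitive of $\delta^{-1}\eta\bigl((r-B)/\veps\bigr)$ for a smooth nonincreasing cutoff $\eta$ with $\eta(-s)=1-\eta(s)$ (there obtained by integrating an even bump), so that $\int_{-1}^{1}\eta=1$ forces the plateau value $B/\delta=\pi/2$. Your symmetrization $\chi(s)=\tfrac12\bigl(\chi_1(s)+1-\chi_1(-s)\bigr)$ is simply another way to produce such a cutoff. Defining $\te_\veps(r)=\int_0^r\psi_\veps$ directly also makes smoothness automatic, instead of requiring the paper's piecewise gluing check.
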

\begin{proof}
Define
\[
        \psi(s)
        =
        \begin{cases}
        \exp\!\left(-\dfrac{1}{1-4s^2}\right),& |s|<\dfrac12,\\[6pt]
        0,& |s|\geq \dfrac12.
        \end{cases}
\]
Then \(\psi\in C_c^\infty((-1,1))\), \(\psi\geq0\), \(\psi\) is even, and
\(\psi\) is not identically zero. Set
\[
        D:=\int_{-1}^1\psi(u)\,\ud u>0
\]
and define
\begin{align}
        \eta(s)
        =
        \frac{1}{D}\int_s^1\psi(u)\,\ud u .
\end{align}
Since \(\psi\geq0\), we have \(0\leq\eta\leq1\), and
\[
        \eta'(s)=-\frac{\psi(s)}{D}\leq0.
\]
Moreover, since \(\psi\) is supported in \([-1/2,1/2]\), we have
\[
        \eta(s)=1\quad\text{for }s\leq -\frac12,
        \qquad
        \eta(s)=0\quad\text{for }s\geq \frac12.
\]
Thus \(\eta\) is constant near \(s=-1\) and near \(s=1\). Finally, since
\(\psi\) is even, we have
\[
        \eta(-s)=1-\eta(s).
\]
Therefore,
\[
        \int_{-1}^1\eta(s)\,\ud s
        =
        \frac12\int_{-1}^1\bigl(\eta(s)+\eta(-s)\bigr)\,\ud s
        =
        1.
\]

Define
\begin{align}
        \te_\veps(r)=
        \begin{cases}
        \dfrac{r}{\delta},&0\leq r\leq B-\veps,\\[6pt]
        \dfrac{B-\veps}{\delta}
        +\dfrac{\veps}{\delta}
        \displaystyle\int_{-1}^{(r-B)/\veps}\eta(s)\,\ud s,
        &B-\veps\leq r\leq B+\veps,\\[12pt]
        \dfrac{\pi}{2},&r\geq B+\veps.
        \end{cases}
\end{align}
When \(B-\veps\leq r\leq B+\veps\), we have
\[
        \te_\veps'(r)
        =
        \frac{1}{\delta}\eta\!\left(\frac{r-B}{\veps}\right),
        \qquad
        \te_\veps''(r)
        =
        \frac{1}{\delta\veps}
        \eta'\!\left(\frac{r-B}{\veps}\right)
        \leq0.
\]
Since \(0\leq\eta\leq1\), this gives
\[
        0\leq \te_\veps'(r)\leq \delta^{-1}.
\]
The same inequalities are immediate on the two outer regions, where
\(\te_\veps(r)=r/\delta\) and \(\te_\veps(r)=\pi/2\), respectively. Thus
\(\te_\veps'\geq0\), \(\te_\veps'\leq\delta^{-1}\), and
\(\te_\veps''\leq0\) on all of \([0,\infty)\).
\end{proof}

\begin{proof}[Proof of Theorem~\ref{thm:gaussian-main}]
Let \(d\geq4\) and let \(0<\veps\ll1\). The argument proceeds in a similar style as in the proof of Theorem~\ref{thm:sphere-main}. 
\paragraph{$\mathrm{(i)}$ \textit{Construction of the metric}} Set
\begin{align}
        \delta=\sqrt{d-2},
        \qquad
        B=\frac{\pi\delta}{2}.
\end{align}
Following the notation of \cite[Definition 2.4]{botvinnik2010homotopy}, we use a
torpedo function \(\rho_\delta\). Since we require more precise estimates on
\(\rho_{\delta,\veps}\), we define it explicitly. Let
\(\te_\veps:[0,\infty)\to[0,\pi/2]\) be the function from Lemma
\ref{lem:theta-eps}, and define
\begin{align}
        \rho_{\delta,\veps}(r)=\delta\sin\te_\veps(r).
\end{align}
Then
\begin{align}
        \rho_{\delta,\veps}(r)
        &=
        \delta\sin(r/\delta)
        \qquad\text{when }0\leq r\leq B-\veps,\\
        \rho_{\delta,\veps}(r)
        &=
        \delta
        \qquad\qquad\qquad\text{when }r\geq B+\veps.
\end{align}
Moreover, since
\begin{align}
        \rho_{\delta,\veps}''
        =
        \delta\cos\te_\veps\,\te_\veps''
        -
        \delta\sin\te_\veps\,(\te_\veps')^2,
\end{align}
and $0\leq\te_\veps\leq\pi/2$, we have \(\rho_{\delta,\veps}''\leq0\). Thus
\(\rho_{\delta,\veps}\) is a torpedo function in the sense of
\cite[Definition 2.4]{botvinnik2010homotopy}, except that we keep track of the
transition scale \(\veps\).
We now define the metric directly on \(\bbR^d\). On \(\bbR^d\setminus\{0\}\),
write
\begin{align}
        x=r\omega,
        \qquad
        r=|x|,
        \qquad
        \omega\in\bbS^{d-1}.
\end{align}
Define
\begin{align}\label{eq:noncompact-torpedo-metric}
        g_\veps
        =
        \ud r^2+\rho_{\delta,\veps}(r)^2g_{\bbS^{d-1}},
\end{align}
where \(g_{\bbS^{d-1}}\) is the canonical round metric on the unit sphere
\(\bbS^{d-1}\). As \(r\to0\) we have
\begin{align}
        \rho_{\delta,\veps}(r)
        =
        \delta\sin(r/\delta)
        =
        r-\frac{r^3}{6\delta^2}+O(r^5),
\end{align}
the metric smoothly extends across the origin as a rotationally symmetric metric
on \(\bbR^d\). Moreover,
\begin{align}
        g_\veps
        =
        \ud r^2+\delta^2g_{\bbS^{d-1}}
        \qquad\text{when }r\geq B+\veps.
\end{align}
This also shows that \(g_\veps\) is cylindrical at infinity and hence complete.

\paragraph{$\mathrm{(ii)}$ \textit{Construction of the measure}} Next, we will define a suitable weight which will be quadratic on the cylindrical end. To this end, choose a smooth function \(q_\veps:[0,\infty)\to[0,1]\) such
that
\begin{align}
        q_\veps=0\quad\text{on }[0,B-2\veps],
        \qquad
        q_\veps=1\quad\text{on }[B-\veps,\infty).
\end{align}
Then define \(V_\veps\) as
\begin{align}
        V_\veps(r)=\int_0^r(r-u)q_\veps(u)\,\ud u,
        \qquad
        V_\veps\geq0,
        \qquad
        V_\veps'\geq0.
\end{align}
Set
\begin{align}
        \nu_\veps=e^{-V_\veps}\dvol_{g_\veps},
        \qquad
        Z_\veps=\int_{\bbR^d}\ud\nu_\veps,
        \qquad
        \mu_\veps=Z_\veps^{-1}\nu_\veps.
\end{align}
Since \(V_\veps\) is bounded from below by a quadratic function in \(r\) for
\(r\geq B-\veps\), the normalizing constant \(Z_\veps<\infty\).

\paragraph{$\mathrm{(iii)}$ \textit{Verification of the $\CD(1,\infty)$ condition}} We now show that \((\bbR^d,g_\veps,\mu_\veps)\) is \(\CD(1,\infty)\). Let
\(\partial_r\) denote the unit vector in the radial direction and let
\(X\perp\partial_r\) be a \(g_\veps\)-unit vector. Since the potential
\(V_\veps=V_\veps(r)\) is radial, the Bakry--\'Emery tensor
\begin{align}
        \Ric_{g_\veps,V_\veps}:=\Ric_{g_\veps}+\nabla^2V_\veps
\end{align}
is given by (cf. proof of Proposition 2 in \cite{wylie2016some})
\begin{align}
        \Ric_{g_\veps,V_\veps}(\partial_r,\partial_r)
        &=
        -(d-1)\frac{\rho_{\delta,\veps}''}{\rho_{\delta,\veps}}
        +
        V_\veps'',\\
        \Ric_{g_\veps,V_\veps}(\partial_r,X)
        &=
        0,\\
        \Ric_{g_\veps,V_\veps}(X,X)
        &=
        -\frac{\rho_{\delta,\veps}''}{\rho_{\delta,\veps}}
        +
        (d-2)\frac{(1-(\rho_{\delta,\veps}')^2)}{\rho_{\delta,\veps}^2}
        +
        \frac{V_\veps'\rho_{\delta,\veps}'}{\rho_{\delta,\veps}}.
\end{align}
Since
\begin{align}
        \rho_{\delta,\veps}'\geq0,
        \qquad
        \rho_{\delta,\veps}''\leq0,
\end{align}
and since
\begin{align}
        \rho_{\delta,\veps}'
        =
        \delta\cos\te_\veps\,\te_\veps',
\end{align}
we have from Lemma \ref{lem:theta-eps} that
\begin{align}
        (\rho_{\delta,\veps}')^2
        \leq
        \cos^2\te_\veps
        =
        1-\frac{\rho_{\delta,\veps}^2}{\delta^2}.
\end{align}
Since \(\delta^2=d-2\), we get
\begin{align}
        (d-2)\bigl(1-(\rho_{\delta,\veps}')^2\bigr)
        \geq
        \rho_{\delta,\veps}^2.
\end{align}
Since also \(-\rho_{\delta,\veps}\rho_{\delta,\veps}''\geq0\), we get
\begin{align}
        -\frac{\rho_{\delta,\veps}''}{\rho_{\delta,\veps}}
        +
        (d-2)\frac{1-(\rho_{\delta,\veps}')^2}{\rho_{\delta,\veps}^2}
        &=
        \frac{(d-2)(1-(\rho_{\delta,\veps}')^2)
        -\rho_{\delta,\veps}\rho_{\delta,\veps}''}
        {\rho_{\delta,\veps}^2}\geq1.
\end{align}
Moreover,
\begin{align}
        V_\veps'\frac{\rho_{\delta,\veps}'}{\rho_{\delta,\veps}}\geq0,
\end{align}
because \(V_\veps'\geq0\) and \(\rho_{\delta,\veps}'\geq0\). Thus
\begin{align}
        \Ric_{g_\veps,V_\veps}(X,X)\geq1.
\end{align}
It remains to check the radial direction. If \(0<r\leq B-\veps\), then
\begin{align}
        \rho_{\delta,\veps}(r)
        =
        \delta\sin(r/\delta),
        \qquad
        \rho_{\delta,\veps}''
        =
        -\frac{1}{\delta^2}\rho_{\delta,\veps}.
\end{align}
Hence
\begin{align}
        \Ric_{g_\veps,V_\veps}(\partial_r,\partial_r)
        &=
        -(d-1)\frac{\rho_{\delta,\veps}''}{\rho_{\delta,\veps}}
        +
        V_\veps''\geq
        \frac{d-1}{\delta^2}=
        \frac{d-1}{d-2}
        >
        1.
\end{align}
If \(r\geq B-\veps\), then since
\begin{align}
        V_\veps'(r)
        =
        \int_0^r q_\veps(u)\,\ud u
        \implies
        V_\veps''(r)=q_\veps(r),
\end{align}
and the identity $q_\veps=1$ on $[B-\veps,\infty)$ gives \(V_\veps''(r)=1\). Combining this fact with
\(\rho_{\delta,\veps}''\leq0\), we get
\begin{align}
        \Ric_{g_\veps,V_\veps}(\partial_r,\partial_r)
        =
        -(d-1)\frac{\rho_{\delta,\veps}''}{\rho_{\delta,\veps}}
        +
        1
        \geq1.
\end{align}
Thus, on \(\bbR^d\setminus\{0\}\),
\begin{align}
        \Ric_{g_\veps}+\nabla^2V_\veps\geq g_\veps,
\end{align}
which by continuity extends to $r=0.$ Writing \(W_\veps=V_\veps+\log Z_\veps\), we have
\begin{align}
        \mu_\veps=e^{-W_\veps}\dvol_{g_\veps},
        \qquad
        \Ric_{g_\veps}+\nabla^2W_\veps
        =
        \Ric_{g_\veps}+\nabla^2V_\veps
        \geq g_\veps.
\end{align}
Hence \((\bbR^d,g_\veps,\mu_\veps)\) satisfies \(\CD(1,\infty)\).
\paragraph{$\mathrm{(iv)}$ \textit{Construction of test functions}}
We finally show the desired spectral inequality. To achieve this goal we will
construct suitable test functions. Since the normalization constants cancel in the
Rayleigh quotient, it suffices to use \(\nu_\veps\) instead of \(\mu_\veps\). Thus,
for any nonzero test function \(u\in W^{1,2}(\bbR^d,g_\veps,\nu_\veps)\), set
\begin{align}
        \cR_\veps(u)
        =
        \frac{\int_{\bbR^d}|\nabla u|_{g_\veps}^2\,\ud\nu_\veps}
        {\int_{\bbR^d}u^2\,\ud\nu_\veps}.
\end{align}
We would like to compare \(\cR_\veps\) with a simpler model Rayleigh quotient. To
this end define
\begin{align}
        \rho_0(r)=
        \begin{cases}
        \delta\sin(r/\delta),&0\leq r\leq B,\\
        \delta,&r\geq B,
        \end{cases}
        \qquad
        V_0(r)=
        \begin{cases}
        0,&0\leq r\leq B,\\
        \frac{(r-B)^2}{2},&r\geq B.
        \end{cases}
\end{align}
Let
\begin{align}
        \nu_0=e^{-V_0}\rho_0(r)^{d-1}\,\ud r\,\dvol_{\bbS^{d-1}}.
\end{align}
By construction, \(\rho_{\delta,\veps}=\rho_0\) on
\([0,B-\veps]\cup [B+\veps,\infty)\), while \(V_\veps=V_0\) on
\([0,B-2\veps]\). Next, observe that since \(q_\veps(u)=0\) on
\([0,B-2\veps]\) and \(0\leq q_\veps\leq1\), we have
\begin{align}
        V_\veps(B)
        &=
        \int_{B-2\veps}^{B}(B-u)q_\veps(u)\,\ud u
        \leq 2\veps^2,\\
        V_\veps'(B)
        &=
        \int_{B-2\veps}^{B}q_\veps(u)\,\ud u
        \leq 2\veps.
\end{align}
Furthermore, since \(V_\veps''(r)=1\) for all \(r\geq B\), we get
\begin{align}
        V_\veps'(r)-V_\veps'(B)
        &=
        \int_B^r V_\veps''(t)\,\ud t
        =
        r-B,\\
        V_\veps(r)-V_\veps(B)
        &=
        \int_B^r V_\veps'(t)\,\ud t
        =
        V_\veps'(B)(r-B)+\frac{1}{2}(r-B)^2.
\end{align}
Therefore, using the fact \(V_0(r)=\frac{1}{2}(r-B)^2\) for \(r\geq B\), we get
for \(\veps\leq1\),
\begin{align}
        0\leq V_\veps(r)-V_0(r)
        \leq
        2\veps(1+r-B).
\end{align}
Thus, using the elementary inequality \(1-e^{-x}\leq x\) for all \(x\geq0\), we
get for \(r\geq B\),
\begin{align}\label{eq:end-density-error}
        \left|e^{-V_\veps(r)}-e^{-V_0(r)}\right|
        \leq e^{-V_0(r)} |1-e^{-(V_\veps(r)-V_0(r))}|
        \leq
        2\veps(1+r-B)e^{-(r-B)^2/2}.
\end{align}
Let \(x_1,\dots,x_d\) be the coordinate functions on \(\bbR^d\) restricted to the
unit sphere \(\bbS^{d-1}\). In particular, they satisfy
\begin{align}
        -\Delta_{\bbS^{d-1}}x_i=(d-1)x_i,
        \qquad
        \int_{\bbS^{d-1}}x_i\,\dvol_{\bbS^{d-1}}=0,
\end{align}
and
\begin{align}
        \int_{\bbS^{d-1}}x_ix_j\,\dvol_{\bbS^{d-1}}=0
        \qquad(i\neq j).
\end{align}
Set
\begin{align}
        a_\veps(r)=\frac{\rho_{\delta,\veps}(r)}{\delta},
        \qquad
        U_{\veps,i}(r,\omega)=a_\veps(r)x_i(\omega).
\end{align}
Then \(|a_\veps|\leq1\), \(|a_\veps'|\leq\frac1\delta\), and \(U_{\veps,i}\)
are smooth as \(r\to0\). For \(r>0\),
\begin{align}
        |\nabla U_{\veps,i}|_{g_\veps}^2
        &=
        (a_\veps')^2x_i^2
        +
        \frac{a_\veps^2}{\rho_{\delta,\veps}^2}
        |\nabla_{\bbS^{d-1}}x_i|^2=
        (a_\veps')^2x_i^2
        +
        \frac{1}{\delta^2}|\nabla_{\bbS^{d-1}}x_i|^2.
\end{align}
Define
\begin{align}
        a_0(r)=\frac{\rho_0(r)}{\delta},
        \qquad
        U_{0,i}(r,\omega)=a_0(r)x_i(\omega),
\end{align}
and
\begin{align}
        \calD_U^0=\int U_{0,i}^2\,\ud\nu_0,
        \qquad
        \calN_U^0=\int |\nabla U_{0,i}|^2\,\ud\nu_0,
\end{align}
where \(a_0'\) is understood in the weak sense in \(\calN_U^0\). Denote \(I_\veps:=[B-2\veps,B+\veps]\). Then 
\begin{align}\label{eq:angular-error-estimate-new}
        &\left|
        \int U_{\veps,i}^2\,\ud\nu_\veps-\calD_U^0
        \right|
        +
        \left|
        \int |\nabla U_{\veps,i}|_{g_\veps}^2\,\ud\nu_\veps-\calN_U^0
        \right| \notag\\
        &\leq
        \int_0^\infty\int_{\bbS^{d-1}}
        \left|
        a_\veps^2x_i^2 e^{-V_\veps}\rho_{\delta,\veps}^{d-1}
        -
        a_0^2x_i^2 e^{-V_0}\rho_0^{d-1}
        \right|
        \,\dvol_{\bbS^{d-1}}\ud r \notag\\
        &\quad+
        \int_0^\infty\int_{\bbS^{d-1}}
        \left|
        \left((a_\veps')^2x_i^2+\frac1{\delta^2}|\nabla_{\bbS^{d-1}}x_i|^2\right)
        e^{-V_\veps}\rho_{\delta,\veps}^{d-1}
        -
        \left((a_0')^2x_i^2+\frac1{\delta^2}|\nabla_{\bbS^{d-1}}x_i|^2\right)
        e^{-V_0}\rho_0^{d-1}
        \right|
        \,\dvol_{\bbS^{d-1}}\ud r \notag\\
        &\leq
        C_d |I_\veps|
        +
        C_d\int_B^\infty
        \left|e^{-V_\veps(r)}-e^{-V_0(r)}\right|\,\ud r \notag\\
        &\leq
        C_d\veps
        +
        C_d\veps
        \int_B^\infty
        (1+r-B)e^{-(r-B)^2/2}\,\ud r \notag\\
        &\leq
        C_d\veps,
\end{align}
with constants $C_d>0$ depending on the dimension $d\geq 4$ and possibly different on each line. In the above display, we used that \(x_i\), \(\nabla_{\bbS^{d-1}}x_i\), and the
quantities
\[
        |a_\veps|\leq 1,\qquad |a_\veps'|\leq \frac1\delta,\qquad
        0\leq \rho_{\delta,\veps}\leq \delta,\qquad e^{-V_\veps}\leq 1
\]
are uniformly bounded on \(I_\veps\). Define \(\delta_U:=2\calD_U^0-\calN_U^0\), then 
\begin{align}\label{eqn:notation}
        \omega_{d-1}:=|\mathbb S^{d-1}|,
        \qquad
        I_m:=\int_0^{\pi/2}\sin^m t\,\ud t,
        \qquad
        J_0:=\int_0^\infty e^{-\tau^2/2}\,\ud\tau.
\end{align}
Using
\begin{align}
        \int_{\mathbb S^{d-1}}x_i^2\,\dvol_{\mathbb S^{d-1}}
        =
        \frac{\omega_{d-1}}{d},
        \qquad
        \int_{\mathbb S^{d-1}}|\nabla_{\mathbb S^{d-1}}x_i|^2\,\dvol_{\mathbb S^{d-1}}
        =
        \frac{(d-1)\omega_{d-1}}{d},
\end{align}
and \(\delta^2=d-2\), we compute
\begin{align}
        \delta_U
        &=
        \frac{\omega_{d-1}}{d}
        \left[
        2\delta^d I_{d+1}
        +
        2\delta^{d-1}J_0
        -
        \delta^{d-2}
        \int_0^{\pi/2}
        \bigl(\cos^2t+d-1\bigr)\sin^{d-1}t\,\ud t
        -
        (d-1)\delta^{d-3}J_0
        \right]  \\
        &=
        \frac{\omega_{d-1}}{d}
        \left[
        \delta^{d-2}\bigl((2d-3)I_{d+1}-dI_{d-1}\bigr)
        +
        \delta^{d-3}(d-3)J_0
        \right]  \\
        &=
        \frac{\omega_{d-1}}{d}
        \left[
        \delta^{d-2}\frac{d(d-4)}{d+1}I_{d-1}
        +
        \delta^{d-3}(d-3)J_0
        \right]
        >0,
\end{align}
where in the last line we used the recurrence $I_{d+1}=\frac{d}{d+1}I_{d-1}$ and $d\geq 4.$ Next, we choose \(\veps>0\) so small that
\(3C_d\veps<\delta_U\). Then, using \eqref{eq:angular-error-estimate-new}, we get
\begin{align}
        \int |\nabla U_{\veps,i}|_{g_\veps}^2\,\ud\nu_\veps \leq
        \calN_U^0+C_d\veps
        =
        2\calD_U^0-\delta_U+C_d\veps<
        2(\calD_U^0-C_d\veps)\leq  2\int U_{\veps,i}^2\,\ud\nu_\veps.
\end{align}
Hence,
\begin{align}\label{eq:angular-strict-new}
        \cR_\veps(U_{\veps,i})<2
        \qquad(i=1,\dots,d).
\end{align}
We next define another test function coming from the radial direction along the cylindrical end. Denote
\begin{align}
        b_A(r)=
        \begin{cases}
        \cos(r/\delta),&0\leq r\leq B,\\
        -A(r-B),&r\geq B,
        \end{cases}
\end{align}
where \(A>0\) is chosen so that \(\int b_A\,\ud\nu_0=0\). This can be done since
\begin{align}
        \int b_A\,\ud\nu_0
        &=
        \omega_{d-1}
        \int_0^B
        \cos(r/\delta)\bigl(\delta\sin(r/\delta)\bigr)^{d-1}\,\ud r
        -
        A\omega_{d-1}\delta^{d-1}
        \int_0^\infty
        \ta e^{-\ta^2/2}\,\ud\ta\\
        &=
        \omega_{d-1}
        \int_0^B
        \cos(r/\delta)\bigl(\delta\sin(r/\delta)\bigr)^{d-1}\,\ud r
        -
        A\omega_{d-1}\delta^{d-1}.
\end{align}
The first term is strictly positive, since \(\cos(r/\delta)>0\) and
\(\sin(r/\delta)>0\) for \(0<r<B\). Hence there is a unique choice
\begin{align}
        A
        =
        \frac{
        \int_0^B
        \cos(r/\delta)\bigl(\delta\sin(r/\delta)\bigr)^{d-1}\,\ud r
        }{
        \delta^{d-1}
        }
        >0
\end{align}
for which \(\int b_A\,\ud\nu_0=0\). Our test function will be $B_\veps:=b_A-c_\veps,$ where we choose $c_\veps:=Z_\veps^{-1} \int b_A\,\ud\nu_\veps$ so that \(\int B_\veps\,\ud\nu_\veps=0\). Although \(b_A\) is generally not \(C^1\) at \(r=B\), it is continuous and piecewise smooth. Hence its weak derivative is the piecewise derivative
\begin{align}
        b_A'(r)=
        \begin{cases}
        -\delta^{-1}\sin(r/\delta),&0< r<B,\\
        -A,&r>B.
        \end{cases}
\end{align}
The function $b_A$ has at most linear growth and $b_A'$ is bounded. Since
$\nu_\veps$ has finite mass and has a Gaussian tail on the cylindrical end,
$b_A,B_\veps\in W^{1,2}(\mathbb R^d,g_\veps,\nu_\veps)$. Thus $B_\veps$ is an admissible test function, and all appearances of $B_\veps'=b_A'$ below are understood in the weak sense. We first note that
\(c_\veps=O_d(\veps)\). This is because \(\int b_A\,\ud\nu_0=0\),
\eqref{eq:end-density-error}, and
\begin{align}
        &\left|\int b_A\,\ud\nu_\veps\right|
        +
        \left|Z_\veps-\int1\,\ud\nu_0\right|
        =
        \left|\int b_A\,\ud\nu_\veps-\int b_A\,\ud\nu_0\right|
        +
        \left|\int1\,\ud\nu_\veps-\int1\,\ud\nu_0\right| \notag\\
        &\leq
        \omega_{d-1}
        \int_0^\infty
        (1+|b_A(r)|)\left|e^{-V_\veps(r)}\rho_{\delta,\veps}(r)^{d-1}
        -
        e^{-V_0(r)}\rho_0(r)^{d-1}\right|\,\ud r \notag\\
        &\leq
        C_d\int_{I_\veps}1\,\ud r
        +
        C_d\int_{B+\veps}^\infty
        (1+r-B)\delta^{d-1}
        \left|e^{-V_\veps(r)}-e^{-V_0(r)}\right|
        \,\ud r \notag\\
        &\leq
        C_d\veps
        +
        C_d\veps
        \int_{B+\veps}^\infty
        (1+r-B)^2e^{-(r-B)^2/2}\,\ud r
        \leq
        C_d\veps.
\end{align}
Moreover, choosing $\veps\leq B/4$, we have that $B/2\leq B-2\veps$ and therefore $V_\veps\equiv 0$ on $[0,B/2]$ which implies
\begin{align}
        Z_\veps
        =
        \int_{\bbR^d}1\,\ud\nu_\veps
        &\geq
        \omega_{d-1}
        \int_0^{B/2}
        e^{-V_\veps(r)}
        \rho_{\delta,\veps}(r)^{d-1}\,\ud r=
        \omega_{d-1}
        \int_0^{B/2}
        \bigl(\delta\sin(r/\delta)\bigr)^{d-1}\,\ud r
        =
        c_d>0.
\end{align}
Thus,
\begin{align}\label{eqn:c_eps-bound}
        |c_\veps|
        =
        \left|
        \frac{\int b_A\,\ud\nu_\veps}{Z_\veps}
        \right|
        \leq
        \frac{C_d\veps}{c_d}
        \leq
        C_d\veps.
\end{align}
Denote
\begin{align}
        \calD_{B_\veps}^0
        =
        \int B_\veps^2\,\ud\nu_0,
        \qquad
        \calN_{B_\veps}^0
        =
        \int |B_\veps'|^2\,\ud\nu_0.
\end{align}
Since \(B_\veps=b_A-c_\veps\), \(\int b_A\,\ud\nu_0=0\), and
\(B_\veps'=b_A'\), we have
\begin{align}
        \calD_{B_\veps}^0
        =
        \int b_A^2\,\ud\nu_0
        +
        c_\veps^2\int1\,\ud\nu_0,\quad \calN_{B_\veps}^0
        =
        \int |b_A'|^2\,\ud\nu_0.
\end{align}
We next claim that
\[
        \delta_b
        :=
        2\int b_A^2\,\ud\nu_0
        -
        \int |b_A'|^2\,\ud\nu_0
        >0.
\]
First, the condition \(\int b_A\,\ud\nu_0=0\) gives the explicit value
\[
        A
        =
        \frac{
        \int_0^B
        \cos(r/\delta)(\delta\sin(r/\delta))^{d-1}\,\ud r
        }{\delta^{d-1}}
        =
        \frac{\delta}{d}.
\]
% Indeed, after the change of variables \(t=r/\delta\),
% \[
%         \int_0^B
%         \cos(r/\delta)(\delta\sin(r/\delta))^{d-1}\,\ud r
%         =
%         \delta^d\int_0^{\pi/2}\cos t\,\sin^{d-1}t\,\ud t
%         =
%         \frac{\delta^d}{d}.
% \]
Also, by integration by parts,
\begin{align}
        \int_0^\infty \tau^2e^{-\tau^2/2}\,\ud\tau
        =
        \int_0^\infty e^{-\tau^2/2}\,\ud\tau
        =J_0.
\end{align}
Now using the same notation as in \eqref{eqn:notation} we get
\begin{align}
        \delta_b
        &=
        \omega_{d-1}
        \left[
        2\delta^d\int_0^{\pi/2}\cos^2t\,\sin^{d-1}t\,\ud t
        +
        2\delta^{d-1}A^2J_0
        -
        \delta^{d-2}I_{d+1}
        -
        \delta^{d-1}A^2J_0
        \right] \\
        &=
        \omega_{d-1}
        \left[
        \delta^{d-2}
        \left(
        2(d-2)(I_{d-1}-I_{d+1})-I_{d+1}
        \right)
        +
        \delta^{d-1}A^2J_0
        \right] \\
        &=
        \omega_{d-1}
        \left[
        \delta^{d-2}\frac{(d-4)}{d+1}I_{d-1}
        +
        \frac{\delta^{d+1}}{d^2}J_0
        \right]
        >0.
\end{align}
Here we used \(I_{d+1}=\frac{d}{d+1}I_{d-1}\), \(\delta^2=d-2\), \(A=\delta/d\), and  \(d\geq4\). Then
\begin{align}
        \delta_{B_\veps}^0
        &:=
        2\calD_{B_\veps}^0-\calN_{B_\veps}^0=
        \delta_b+2c_\veps^2\int1\,\ud\nu_0
        \geq
        \delta_b>0.
\end{align}
We next compare the \(\nu_\veps\)-mass and Dirichlet energy of \(B_\veps\) to these limiting
quantities. Since \(|c_\veps|\leq C_d\veps\), on the cylindrical end
\begin{align}
        |B_\veps|^2+|B_\veps'|^2
        \leq C_d(1+(r-B)^2).
\end{align}
Thus, recalling that \(I_\veps=[B-2\veps,B+\veps]\) and using \eqref{eq:end-density-error}, we get
\begin{align}\label{eq:Beps-error-estimate}
        &\left|
        \int B_\veps^2\,\ud\nu_\veps-\calD_{B_\veps}^0
        \right|
        +
        \left|
        \int |B_\veps'|^2\,\ud\nu_\veps-\calN_{B_\veps}^0
        \right| \notag\\
        &\leq
        C_d\int_{I_\veps}1\,\ud r
        +
        C_d\int_B^\infty
        \bigl(1+(r-B)^2\bigr)
        \left|e^{-V_\veps(r)}-e^{-V_0(r)}\right|
        \,\ud r \notag\\
        &\leq
        C_d\veps
        +
        C_d\veps
        \int_B^\infty
        \bigl(1+(r-B)^2\bigr)(1+r-B)e^{-(r-B)^2/2}
        \,\ud r
        \leq
        C_d\veps.
\end{align}
Moreover,
\begin{align}
        \int b_A^2\,\ud\nu_0
        &\geq
        \omega_{d-1}
        \delta^d
        \int_0^{\pi/2}\cos^2t\,\sin^{d-1}t\,\ud t  
        =:m_d>0.
\end{align}
Choose \(\veps>0\) so small that \(C_d\veps<\frac12\calD_{B_\veps}^0\), which is possible since $\calD_{B_\veps}^0\geq \int b^2_A \ud\nu_0\geq m_d>0$, and
\(3C_d\veps<\delta_b\). Then, using \eqref{eq:Beps-error-estimate} and
\(\delta_{B_\veps}^0\geq\delta_b\), we obtain
\begin{align}
        \int |\nabla B_\veps|_{g_\veps}^2\,\ud\nu_\veps
        &=
        \int |B_\veps'|^2\,\ud\nu_\veps
        \leq
        \calN_{B_\veps}^0+C_d\veps\\
        &=
        2\calD_{B_\veps}^0-\delta_{B_\veps}^0+C_d\veps
        <
        2(\calD_{B_\veps}^0-C_d\veps)\\
        &\leq
        2\int B_\veps^2\,\ud\nu_\veps.
\end{align}
Hence
\begin{align}\label{eq:radial-strict-new}
        \cR_\veps(B_\veps)<2.
\end{align}
Let
\begin{align}
        F_\veps
        =
        \spn\{1,B_\veps,U_{\veps,1},\dots,U_{\veps,d}\}.
\end{align}
We show that the functions spanning $F_\veps$ are pairwise \(L^2(\nu_\veps)\)-orthogonal. Since
\begin{align}
        \ud\nu_\veps
        =
        e^{-V_\veps(r)}\rho_{\delta,\veps}(r)^{d-1}\,\ud r\,\dvol_{\bbS^{d-1}},
\end{align}
we have
\begin{align}
        \int_{\bbR^d} B_\veps\,\ud\nu_\veps
        &=0,\\
        \int_{\bbR^d} U_{\veps,i}\,\ud\nu_\veps
        &=
        \left(\int_0^\infty
        a_\veps(r)e^{-V_\veps(r)}\rho_{\delta,\veps}(r)^{d-1}\,\ud r
        \right)
        \left(\int_{\bbS^{d-1}}x_i\,\dvol_{\bbS^{d-1}}\right)
        =
        0,\\
        \int_{\bbR^d} B_\veps U_{\veps,i}\,\ud\nu_\veps
        &=
        \left(\int_0^\infty
        B_\veps(r)a_\veps(r)e^{-V_\veps(r)}
        \rho_{\delta,\veps}(r)^{d-1}\,\ud r
        \right)
        \left(\int_{\bbS^{d-1}}x_i\,\dvol_{\bbS^{d-1}}\right)
        =
        0,\\
        \int_{\bbR^d} U_{\veps,i}U_{\veps,j}\,\ud\nu_\veps
        &=
        \left(\int_0^\infty
        a_\veps(r)^2e^{-V_\veps(r)}
        \rho_{\delta,\veps}(r)^{d-1}\,\ud r
        \right)
        \left(\int_{\bbS^{d-1}}x_ix_j\,\dvol_{\bbS^{d-1}}\right)
        =
        0
        \quad(i\neq j).
\end{align}
Thus the functions in the spanning set are \(L^2(\nu_\veps)\)-orthogonal. In
particular they are linearly independent, and hence \(\dim F_\veps=d+2\). We next show orthogonality with respect to the Dirichlet energy inner product. To this end, we first compute
the gradients,
\begin{align}
        \nabla B_\veps=B_\veps'\partial_r,
        \qquad
        \nabla U_{\veps,i}
        =
        a_\veps'x_i\partial_r
        +
        \frac{a_\veps}{\rho_{\delta,\veps}^2}
        \nabla_{\bbS^{d-1}}x_i.
\end{align}
As a result, we get
\begin{align}
        \int_{\bbR^d}\la\nabla B_\veps,\nabla U_{\veps,i}\ra_{g_\veps}\,\ud\nu_\veps
        &=
        \left(\int_0^\infty
        B_\veps'(r)a_\veps'(r)e^{-V_\veps(r)}
        \rho_{\delta,\veps}(r)^{d-1}\,\ud r
        \right)
        \left(\int_{\bbS^{d-1}}x_i\,\dvol_{\bbS^{d-1}}\right)
        =
        0,
\end{align}
for all \(1\leq i\leq d\). Moreover,
\begin{align}
        \int_{\bbR^d}\la\nabla U_{\veps,i},\nabla U_{\veps,j}\ra_{g_\veps}\,\ud\nu_\veps
        &=
        \left(\int_0^\infty
        (a_\veps')^2e^{-V_\veps}
        \rho_{\delta,\veps}^{d-1}\,\ud r
        \right)
        \left(\int_{\bbS^{d-1}}x_ix_j\,\dvol_{\bbS^{d-1}}\right)\\
        &\quad+
        \left(\int_0^\infty
        a_\veps^2e^{-V_\veps}
        \rho_{\delta,\veps}^{d-3}\,\ud r
        \right)
        \left(\int_{\bbS^{d-1}}
        \la\nabla_{\bbS^{d-1}}x_i,\nabla_{\bbS^{d-1}}x_j\ra
        \,\dvol_{\bbS^{d-1}}\right)\\
        &=
        0,
\end{align}
for all \(1\leq i,j\leq d\), \(i\neq j\). Here the last equality follows because
\begin{align}
        \int_{\bbS^{d-1}}x_ix_j\,\dvol_{\bbS^{d-1}}=0
        \quad(i\neq j),
\end{align}
and, using \(-\Delta_{\bbS^{d-1}}x_j=(d-1)x_j\),
\begin{align}
        \int_{\bbS^{d-1}}
        \la\nabla_{\bbS^{d-1}}x_i,\nabla_{\bbS^{d-1}}x_j\ra
        \,\dvol_{\bbS^{d-1}}
        =
        (d-1)\int_{\bbS^{d-1}}x_ix_j\,\dvol_{\bbS^{d-1}}
        =
        0.
\end{align}
Thus the spanning functions are pairwise orthogonal both in \(L^2(\nu_\veps)\)
and with respect to the Dirichlet energy inner product. Then, for every nonzero \(u\in F_\veps\),
\begin{align}
        \cR_\veps(u)
        \leq
        \max\{\cR_\veps(B_\veps),
              \cR_\veps(U_{\veps,1}),\dots,\cR_\veps(U_{\veps,d})\}
        <2.
\end{align}
By the min-max characterization of the variational eigenvalues,
\begin{align}
        \lambda_{d+2}(\bbR^d,g_\veps,\mu_\veps)
        \leq
        \sup_{0\neq u\in F_\veps}\cR_\veps(u)
        <2.
\end{align}
Recall that \eqref{eqn:gaussian-spectrum} implies that
\(\lambda_{d+2}(\bbR^d,|\cdot|,\gamma^d)=2\). Thus
\begin{align}
        \lambda_{d+2}(\bbR^d,g_\veps,\mu_\veps)
        <
        \lambda_{d+2}(\bbR^d,|\cdot|,\gamma^d),
\end{align}
and the theorem follows.
\end{proof}
\begin{cor}\label{cor:gaussian}
Let \((\R^d,g,\mu)\) be the weighted manifold constructed in Theorem
\ref{thm:gaussian-main}, which satisfies \(\CD(1,\infty)\). Then there does not exist a $1$-Lipschitz map
\[
        T:(\R^d,|\cdot|,\gamma^d)\to (\R^d,g,\mu)
\]
pushing forward $\gamma^d$ onto $\mu$ up to a finite constant.
\end{cor}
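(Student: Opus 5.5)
The plan is to argue by contradiction, exactly as in the proof of Corollary~\ref{cor:sphere}, combining the Contraction Principle (Theorem~\ref{thm:contraction}) with the strict eigenvalue gap from Theorem~\ref{thm:gaussian-main}. Suppose there were a $1$-Lipschitz map $T:(\R^d,|\cdot|,\gamma^d)\to(\R^d,g,\mu)$ with $T_\#\gamma^d=c\,\mu$ for some finite constant $c>0$. Both spaces are complete weighted manifolds: the Gaussian space trivially, and $(\R^d,g,\mu)$ because the metric $g=g_\veps$ from Theorem~\ref{thm:gaussian-main} is cylindrical outside a compact set, hence complete, and $\mu$ is a smooth positive finite measure. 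So the hypotheses of Theorem~\ref{thm:contraction} hold with $L=1$.

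Theorem~\ref{thm:contraction} would then give
\begin{align}
        \lambda_k(\R^d,g,\mu)\geq\lambda_k(\R^d,|\cdot|,\gamma^d)
        \qquad\text{for every }k\geq1.
\end{align}
Taking $k=d+2$ and using \eqref{eqn:gaussian-spectrum}, this yields $\lambda_{d+2}(\R^d,g,\mu)\geq2$. This contradicts \eqref{eqn:index-gaussian-K}, which says $\lambda_{d+2}(\R^d,g,\mu)<2$. Hence no such $T$ exists.

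The only point that needs care is making sure the eigenvalues in Theorem~\ref{thm:contraction} are the same objects as those in Theorem~\ref{thm:gaussian-main}. The paper defines $\lambda_k$ through the min--max over $W^{1,2}$ subspaces, and the upper bound in Theorem~\ref{thm:gaussian-main} was obtained from exactly that min--max using an explicit $(d+2)$-dimensional test space. So no compact-resolvent or discreteness argument is required, provided Theorem~\ref{thm:contraction} is read with these variational eigenvalues.

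Two further remarks check that the contradiction is not vacuous. First, the normalization constant $c$ is irrelevant, since Rayleigh quotients are invariant under scaling the measure. Second, the map $T$ need not be injective: the contraction principle works by pulling back test functions $f\mapsto f\circ T$, which preserves $L^2$ norms up to $c$ and does not increase Dirichlet energy. I do not expect any genuine obstacle here; the corollary is a direct combination of earlier results.
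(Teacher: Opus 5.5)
Your proof is correct and follows the paper's argument exactly: assume such a $T$ exists, apply the Contraction Principle (Theorem~\ref{thm:contraction}) with $L=1$, and obtain a contradiction with \eqref{eqn:index-gaussian-K} at the index $k=d+2$. Your additional checks are consistent with the paper and make explicit what it leaves implicit, namely completeness of $g$, the use of the same variational definition of $\lambda_k$ on both sides, and the irrelevance of the normalizing constant.
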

\begin{proof}
If such a map existed, then Theorem~\ref{thm:contraction} would imply that for all $k\geq 1$ we have
\begin{align}
   \lam_k(\R^d,g,\mu) \geq  \lam_k(\R^d,|\cdot|,\gamma^d).
\end{align}
This contradicts \eqref{eqn:index-gaussian-K}.
\end{proof}

\section{Proof of Theorem~\ref{thm:gaussian-split-main}}
The proof is based on a volume-preserving perturbation of the round metric on
$\bbS^{d-1}$. We first use this perturbation to construct an explicit
finite-dimensional space of ordinary polynomials with small Rayleigh quotient
on a weighted metric cone. We then smooth the vertex of the cone by a
trace-free slow interpolation.  For a metric $g$ and a smooth function $V$, we write
\begin{align}
        \cR_{g,V}(u)
        :=
        \frac{\int |\nabla u|_g^2e^{-V}\dvol_g}
        {\int u^2e^{-V}\dvol_g}.
\end{align}
When $V=0$, we simply write $\cR_g(u)$. Let $g_{\can}$ denote the
canonical round metric on $\bbS^n$. For a metric $\bar{g}$ on $\bbS^n$ and a number
$s>0$, define
\begin{align}\label{eq:gaussian-split-Ahs}
        A(\bar{g},s)
        :=
        \max\left\{
        1,
        \sup_{\substack{\omega\in\bbS^n\\0\neq\xi\in T_\omega^*\bbS^n}}
        \frac{s^{-2}|\xi|_{\bar{g}}^2}{|\xi|_{g_\can}^2}
        \right\}.
\end{align}
This constant arises naturally when comparing the Dirichlet energy of functions written over cones in either the spherical metric or the perturbed metric $\bar{g}.$ For $\vartheta\in\bbR$, define the rotation
$R_\vartheta:\bbS^n\to\bbS^n$ by $R_\vartheta(z,y):=(e^{\iii\vartheta}z,y)$, where we use $\bbS^n\subset \bbC\times \R^{n-1}$ when $n\geq 4.$

We begin with the required perturbation of the spherical link.
\begin{lem}\label{lem:gaussian-split-link}
Let $n\geq4$. There exist an integer $q\geq2$, parameters $0<s<\rho<1,$ a smooth metric $\bar g$ on $\bbS^n$, and a smooth family of metrics
$\{g_\ta\}_{\ta\in[0,1]}$ on $\bbS^n$ satisfying $R_\vartheta^*g_\ta=g_\ta$ for every $\vartheta\in\bbR$ and $\ta\in[0,1]$ on $\bbS^n$ such that ${g}_0=g_{\can},$ ${g}_1=\bar{g}$ and for every $\ta\in [0,1]$ we have $\dvol_{{g}_\ta}=\dvol_{g_\can}$, $\operatorname{tr}_{g_\ta}(\partial_\ta g_\ta)=0$ and $\Ric_{{g}_\ta}\geq(n-1)\rho^2{g}_\ta$. Moreover, if
\begin{align}
        f_q(z,y):=\Re(z^q),
        \qquad
        (z,y)\in\bbS^n\subset\bbC\times\bbR^{n-1},
\end{align}
then
\begin{align}\label{eq:gaussian-split-link-Rayleigh}
                \cR_{\bar g}(f_q)<s^2q(q+n-1),
\end{align}
and
\begin{align}\label{eq:gaussian-split-link-comparison}
        A(\bar{g},s)(q-1)<q.
\end{align}
\end{lem}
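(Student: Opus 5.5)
The plan is to mimic the construction in the proof of Theorem~\ref{thm:sphere-main}: stretch the $\bbS^1$-direction of the round metric by a factor concentrated where $|z|$ is large. The difference is that we now compensate in the remaining directions so that the volume form is unchanged and the variation is trace-free. In the coordinates $(t,\theta,\omega)$ of Section~\ref{sec:sphere-thm-main} (with $d$ replaced by $n$), set $\psi:=|z|^4=\sin^4 t$ and, for a small parameter $\veps>0$ and $\ta\in[0,1]$,
\begin{align}
        g_\ta
        :=
        e^{-2\ta\veps\psi/(n-1)}\bigl(\ud t^2+\cos^2t\,g_{\bbS^{n-2}}\bigr)
        +
        e^{2\ta\veps\psi}\sin^2t\,\ud\theta^2 ,
\end{align}
with $\bar g:=g_1$. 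Globally this is
\begin{align}
        g_\ta=e^{-2\ta\veps\psi/(n-1)}g_\can+\bigl(e^{2\ta\veps\psi}-e^{-2\ta\veps\psi/(n-1)}\bigr)|z|^{-2}\alpha\otimes\alpha,
\end{align}
where $\alpha=x_1\,dx_2-x_2\,dx_1$. The coefficient of $\alpha\otimes\alpha$ is smooth because the bracket is a smooth function of $|z|^2$ vanishing at $|z|=0$. Hence $g_\ta$ is a smooth family and $R_\vartheta$-invariant, with $g_0=g_\can$. The $n-1$ directions $\partial_t$ and $T\bbS^{n-2}$ each get the factor $e^{-2\ta\veps\psi/(n-1)}$, while the $\theta$-direction gets $e^{2\ta\veps\psi}$. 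Therefore $\det$ is unchanged, so $\dvol_{g_\ta}=\dvol_{g_\can}$, and $\operatorname{tr}_{g_\ta}(\partial_\ta g_\ta)=(n-1)\cdot(-2\veps\psi/(n-1))+2\veps\psi=0$.

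Next I would record the two $\veps$-dependent constants. First, since $g_\ta$ depends smoothly on $\ta\veps$ and equals $g_\can$ at $\veps=0$, compactness gives $\Ric_{g_\ta}\geq(n-1)(1-C_1\veps)g_\ta$ uniformly in $\ta\in[0,1]$. I would sharpen $C_1$ explicitly using Lemma~\ref{lem:multiply-warped-ricci} as in step (ii) of Theorem~\ref{thm:sphere-main}. This requires viewing $g_\ta$ as a multiply warped product over the base $(0,\pi/2)$ with a conformally changed base metric, or more simply computing the first variation $\frac{d}{d\veps}\Ric$ at $\veps=0$ directly. I would then set $\rho^2:=1-C_1\veps$. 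Second, the dual metric satisfies $|\xi|^2_{\bar g}\leq e^{2\veps/(n-1)}|\xi|^2_{g_\can}$, since only the $\theta$-direction is dilated. Hence $A(\bar g,s)\leq s^{-2}e^{2\veps/(n-1)}$ once $s<1$.

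For the Rayleigh quotient of $f_q=\Re(z^q)=\sin^qt\cos(q\theta)$, the denominator is exactly the round one because the volume form is unchanged. The numerator is
\begin{align}
        \frac{q^2}{2}\int\Bigl(e^{2\veps\psi/(n-1)}\sin^{2q-2}t\cos^2t+e^{-2\veps\psi}\sin^{2q-2}t\Bigr)\dvol_{g_\can}.
\end{align}
Expanding to first order in $\veps$ and using the Beta-moment identities $Q_k/Q_{k-1}=2k/(2k+n-1)$ already used in the paper, I expect
\begin{align}
        \cR_{\bar g}(f_q)=q(q+n-1)\bigl(1-\beta_{n,q}\veps+O_q(\veps^2)\bigr),
\end{align}
where $\beta_{n,q}\to$ a positive limit (essentially $1$, coming from the $-2\veps\psi$ factor on the dominant $\theta$-energy) as $q\to\infty$. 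The $\cos^2t$ term is lower order since $Q_{q}/Q_{q-1}\to1$.

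Finally I would choose $s^2:=1-\tfrac12\beta_{n,q}\veps$. Then \eqref{eq:gaussian-split-link-Rayleigh} holds for $\veps$ small, and \eqref{eq:gaussian-split-link-comparison} reduces to $e^{2\veps/(n-1)}(q-1)/q<1-\tfrac12\beta_{n,q}\veps$, which holds for $\veps\ll 1/q$ once $q$ is fixed. The condition $s<\rho$ requires $\tfrac12\beta_{n,q}>C_1$, i.e.\ that the first-order eigenvalue gain beats the first-order Ricci loss. This is the main obstacle, and it is exactly where $n\geq4$ and $q$ large must enter, as with $\eta_{d,k}>0$ in Theorem~\ref{thm:sphere-main}.

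If the naive $\psi=\sin^4t$ gives too large a $C_1$, I would try first to fix this by taking $\psi=\sin^{2m}t$ with $m$ large. The curvature cost then concentrates near $|z|=1$ and is bounded by a constant times $1/(n-1)$. Meanwhile the eigenvalue gain for $f_q$, whose mass also concentrates at $|z|\to1$ as $q\to\infty$, stays of order one. Once $q$ and $\psi$ are fixed, I would take $\veps$ small to absorb all $O(\veps^2)$ errors.
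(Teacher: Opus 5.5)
Your family $g_\ta$ is the paper's construction with $\psi=\sin^4t$ in place of $\sin^2t$. The parts about smoothness, $R_\vartheta$-invariance, $\dvol_{g_\ta}=\dvol_{g_\can}$, the trace condition and the bound $A(\bar g,s)\le s^{-2}e^{2\veps/(n-1)}$ are fine. The gap is the step you call the main obstacle and leave open, and it is the actual content of the lemma. You must exhibit $\rho$ with $\Ric_{g_\ta}\ge(n-1)\rho^2g_\ta$ for all $\ta\in[0,1]$, and show that the first-order Ricci loss rate is strictly smaller than the first-order relative drop of $\cR_{\bar g}(f_q)$. A constant $C_1$ obtained from compactness carries no information about this comparison. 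The paper settles it by computing $R_r,R_\theta,R_S$ exactly from Lemma~\ref{lem:multiply-warped-ricci} in the arclength variable $\ud r=e^{-\ta\veps\sin^2t/(n-1)}\ud t$. This yields $\Ric_{g_\ta}\ge(n-1-C_n\veps)g_\ta$ with $C_n=n-1+\frac{2n}{n-1}$. The inequality $C_n<2(n-1)$, i.e.\ $(n-1)^2>2n$, is exactly where $n\ge4$ enters. The paper compares this with the explicit drop $2\veps q^2\bigl(1-\frac{1}{2q+n+1}\bigr)$, whose relative rate tends to $2$, and only then fixes $q$ and places $s^2$ strictly between the two first-order quantities.

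The numbers you do state are also off in ways that matter. The limiting rate is $\lim_q\beta_{n,q}=2$, not $1$, because the dominant $\theta$-energy is multiplied by $e^{-2\veps\psi}$ and $\psi\to1$ on the mass of $f_q$. For your $\psi$ one gets $\beta_{n,q}=\frac{2q}{q+n-1}\cdot\frac{2q+2}{2q+n+1}\cdot\frac{2q+n+2}{2q+n+3}$. Your choice $s^2=1-\frac12\beta_{n,q}\veps$ discards half of the margin and forces $C_1<\frac12\beta_{n,q}<1$, which fails for $n=4$. Indeed, with $u=\sin^2t$ your metric has $R_r=n-1+\frac{\veps}{n-1}u\bigl((6n+18)u-8(n+1)\bigr)+O(\veps^2)$, so $C_1\ge\frac{8(n+1)^2}{3(n+3)(n-1)^2}$, which equals $\frac{200}{189}>1$ at $n=4$; hence $s<\rho$ fails as you set it up. The correct requirement is only $C_1<2$, which your $\sin^4t$ does satisfy. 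One then takes $\rho^2=1-C_1'\veps$ for some $C_1<C_1'<\beta_{n,q}$ and, e.g., $s^2=1-\frac12(\beta_{n,q}+C_1')\veps$. Finally, your fallback $\psi=\sin^{2m}t$ with $m\to\infty$ goes the wrong way. At $|z|=1$ both $R_r$ and $R_S$ have first-order term $\frac{\veps}{n-1}\bigl(2(n-1)-2m(n-3)\bigr)$, so the curvature cost grows linearly in $m$, since the Hessian of $|z|^{2m}$ near $|z|=1$ is of order $m$. Meanwhile the relative Rayleigh gain always stays below $2$ because $0\le\psi\le1$.
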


\begin{proof}
Using the notation from Section~\ref{sec:sphere-thm-main}, we recall that 
\begin{align}
        g_\can 
        =
        \ud t^2+\sin^2t\,\ud\theta^2
        +\cos^2t\,g_{\bbS^{n-2}}.
\end{align}
For $0\leq \ta\leq1$ and $\veps>0$, define
\begin{align}\label{eq:gaussian-split-link-metric}
        g_\ta
        :=
        e^{-2 \ta \veps\sin^2t/(n-1)}
        \left(
        \ud t^2+\cos^2t\,g_{\bbS^{n-2}}
        \right)
        +
        e^{2 \ta \veps\sin^2t}\sin^2t\,\ud\theta^2.
\end{align}
Write $z=x_1+\iii x_2$, and set
\begin{align}
        \upsilon
        &:=
        |z|^2,
        \qquad
        \beta_0
        :=
        x_1\,\ud x_2-x_2\,\ud x_1.
\end{align}
Define
\begin{align}
        b_\ta(\upsilon)
        &:=
        \begin{cases}
        \displaystyle
        \frac{
        e^{2\ta\veps\upsilon}
        -
        e^{-\frac{2\ta\veps}{n-1}\upsilon}
        }{\upsilon},
        &
        \upsilon>0,
        \\[2mm]
        \displaystyle
        \frac{2n\ta\veps}{n-1},
        &
        \upsilon=0.
        \end{cases}
\end{align}
The function $b_\ta$ is smooth in $(\ta,\upsilon)$ and nonnegative.
Since
\begin{align}
        |z|^2
        =
        \sin^2t,
        \quad
        \beta_0
        =|z|^2\,\ud\theta
\end{align}
on the set $U=\{(z,y)\in \bS^n:z\neq 0,y\neq 0\}$ (equivalently when $0<t<\pi/2$), we may rewrite the metric on this set as
\begin{align}
        g_\ta
        =
        e^{-\frac{2\ta\veps}{n-1}|z|^2}g_{\can}
        +
        b_\ta(|z|^2)\beta_0\otimes\beta_0.
\end{align}
Hence $\{g_\ta\}_{\ta\in[0,1]}$ extends to a smooth family of
Riemannian metrics on all of $\bbS^n$. Since the coefficients in \eqref{eq:gaussian-split-link-metric} are independent of $\theta$, we have $R_\vartheta^*g_\ta=g_\ta$ for every $\vartheta\in\bbR$ and $\ta\in[0,1]$. Furthermore, $g_0= g_\can$ and
% We first verify that $g_\ta$ extends smoothly across the two singular orbits.
% Near $t=0$, introduce the radial arclength coordinate, 
% \begin{align}
%         r(t)
%         :=
%         \int_0^t
%         e^{-\tau \veps\sin^2\sigma/(n-1)}\,\ud\sigma.
% \end{align}
% Then
% \begin{align}
%         r=t+O(t^3),
% \end{align}
% and the warping function of the collapsing circle satisfies
% \begin{align}
%         e^{u\veps\sin^2t}\sin t=t+O(t^3).
% \end{align}
% Expressed as a function of $r$, this warping function is smooth and odd, with
% value zero and first derivative one at $r=0$. The remaining coefficients are
% smooth even functions of $r$. Thus the circle closes smoothly at $t=0$.
% Near $t=\pi/2$, the collapsing block
% \begin{align}
%         \ud t^2+\cos^2t\,g_{\bbS^{n-2}}
% \end{align}
% is multiplied by one smooth positive even function of $\pi/2-t$, while the
% coefficient of $\ud\theta^2$ is also a smooth even function. Thus the
% $\bbS^{n-2}$-factor closes smoothly at $t=\pi/2$.
this family of metrics is volume preserving since
\begin{align}
        \dvol_{g_\ta}
        =
        \left(
        e^{-\frac{2\ta\veps\sin^2t}{n-1}}
        \right)^{\frac{n-1}{2}}
        \left(
        e^{2\ta\veps\sin^2t}
        \right)^{\frac12}
        \dvol_{g_0}
        =
        e^{-\ta\veps\sin^2t}
        e^{\ta\veps\sin^2t}
        \dvol_{g_0}
        =
        \dvol_{g_0}.
        \label{eq:gaussian-split-volume-preserving}
\end{align}
Since
\begin{align}
        0=\partial_\tau \dvol_{g_\ta}=\frac{1}{2}\tr_{g_\ta} (\partial_\ta g_\ta) \dvol_{g_\ta}
        =
        \frac12\operatorname{tr}
        \bigl(g_\ta^{-1}\partial_\ta g_\ta\bigr)\dvol_{g_\ta},
\end{align}
we also see that $\tr(g_\ta^{-1}\partial_\ta g_\ta)=0.$ We next estimate the Ricci curvature of the metric $g_\ta$. Recall that on the
coordinate set under consideration, a point of $\bbS^n$ is written as
\begin{align}
        (z,y)
        =
        \bigl(e^{\iii\theta}\sin t,(\cos t)\omega\bigr),
        \qquad
        t\in(0,\pi/2),
        \quad
        \theta\in [0,2\pi),
        \quad
        \omega\in\bbS^{n-2}.
\end{align}
Thus $\theta$ is the angular coordinate on the $\bbS^1$-factor, while
$\omega$ denotes a point of the $\bbS^{n-2}$-factor.

Fix $\ta\in[0,1]$. We define the arclength coordinate in the $t$-direction by
\begin{align}\label{eq:gaussian-split-link-arclength}
        r(t)
        :=
        \int_0^t
        e^{-\frac{\ta\veps}{n-1}\sin^2\sigma}\,\ud\sigma.
\end{align}
Then
\begin{align}\label{eq:gaussian-split-r-t-derivative}
        \ud r 
        =
        e^{-\frac{\ta\veps}{n-1}\sin^2t}
        \ud t.
\end{align}
Define the functions appearing in the warped product by
\begin{align}\label{eq:gaussian-split-link-warping-functions}
        a_\ta(r)
        :=
        e^{\ta\veps\sin^2t}\sin t,\quad 
        c_\ta(r)
        :=
        e^{-\frac{\ta\veps}{n-1}\sin^2t}\cos t,
\end{align}
Then \eqref{eq:gaussian-split-link-metric} is simply $ g_\ta =\ud r^2+ a_\ta(r)^2\,\ud\theta^2+c_\ta(r)^2g_{\bbS^{n-2}}.$  Let $\{v_j\}_{j=1}^{n-2}$ be a local orthonormal frame with respect to the round metric on $\bbS^{n-2}$. Then
\begin{align}
        e_r
        :=
        \partial_r,\quad
        e_\theta
        :=
        \frac1{a_\ta}\partial_\theta,\quad 
        e_j
        :=
        \frac1{c_\ta}v_j,
        \qquad
        1\leq j\leq n-2,
\end{align}
form a local orthonormal frame with respect to the metric $g_\ta$. In these coordinates, we define the following components of the Ricci curvature as
\begin{align}\label{eq:gaussian-split-link-Ricci-eigenvalue-definitions}
        R_r
        :=
        \Ric_{g_\ta}(e_r,e_r),\quad 
        R_\theta
        :=
        \Ric_{g_\ta}(e_\theta,e_\theta),\quad 
        R_S
        :=
        \Ric_{g_\ta}(e_j,e_j),
        \quad
        1\leq j\leq n-2,
\end{align}
where we note that $R_S$ is independent of the index $1\leq j\leq n-2$. We now apply Lemma \ref{lem:multiply-warped-ricci} to estimate the Ricci curvature of each component in the above display. First, we record some derivatives of the warping functions. Denote $\partial_r a_\ta =a'_\ta$, $\partial_r c_\ta=c'_\ta$ and so on for the second derivative. Then, we have 
\begin{align}
        \frac{a_\ta'}{a_\ta}
        &=
        e^{\frac{\ta\veps}{n-1}\sin^2t}
        \left(
        \cot t
        +
        2\ta\veps\sin t\cos t
        \right),\quad 
        \frac{c_\ta'}{c_\ta}
        =
        -
        e^{\frac{\ta\veps}{n-1}\sin^2t}
        \left(
        \tan t
        +
        \frac{2\ta\veps}{n-1}\sin t\cos t
        \right), \label{eq:gaussian-split-link-first-log-derivatives}\\
        \frac{a_\ta'c_\ta'}{a_\ta c_\ta}
        &=
        -
        e^{\frac{2\ta\veps}{n-1}\sin^2t}
        \left(
        1
        +
        2\ta\veps\sin^2t
        +
        \frac{2\ta\veps}{n-1}\cos^2t
        +
        \frac{4(\ta\veps)^2}{n-1}
        \sin^2t\cos^2t
        \right).\label{eq:gaussian-split-link-cross-log-derivative}
\end{align}
By differentiating \eqref{eq:gaussian-split-link-first-log-derivatives} we obtain
\begin{align}\label{eq:gaussian-split-link-second-log-derivatives}
        \frac{a_\ta''}{a_\ta}
        &=
        e^{\frac{2\ta\veps}{n-1}\sin^2t}
        \left(
        -1
        +
        \frac{\ta\veps}{n-1}
        \left(
        6n-4-(8n-6)\sin^2t
        \right)
        +
        \frac{4n(\ta\veps)^2}{n-1}
        \sin^2t\cos^2t
        \right),\\
        \frac{c_\ta''}{c_\ta}
        &=
        e^{\frac{2\ta\veps}{n-1}\sin^2t}
        \left(
        -1
        +
        \frac{2\ta\veps}{n-1}
        \left(
        3\sin^2t-1
        \right)
        \right).
\end{align}
To estimate some numerical constants arising in \eqref{eq:gaussian-split-link-first-log-derivatives},\eqref{eq:gaussian-split-link-cross-log-derivative} and \eqref{eq:gaussian-split-link-second-log-derivatives} we define $C_n:=n-1+\frac{2n}{n-1}.$ Since $n\geq4$, we have $\frac{4n}{n-1}<C_n<2(n-1).$ We choose $\veps>0$ sufficiently small that
\begin{align}\label{eq:gaussian-split-epsilon-first}
        \frac{4n}{n-1}
        +
        \frac{n}{n-1}\veps
        &\leq
        C_n,
        \quad
        C_n\veps<n-1,
\end{align}
and set $\rho^2:= 1-\frac{C_n\veps}{n-1}.$  In particular, $0<\rho<1$. Since $0\leq\ta\leq1$, we have $0\leq\ta\veps\leq\veps$, and therefore
\begin{align}\label{eq:gaussian-split-Ricci-error-bound}
        \frac{4n}{n-1}\ta\veps
        +
        \frac{n}{n-1}(\ta\veps)^2
        =
        \ta\veps
        \left(
        \frac{4n}{n-1}
        +
        \frac{n}{n-1}\ta\veps
        \right)\leq
        \ta\veps
        \left(
        \frac{4n}{n-1}
        +
        \frac{n}{n-1}\veps
        \right)\leq
        C_n\ta\veps
        \leq
        C_n\veps.
\end{align}
Moreover, since $C_n>2$, we have $ 2\ta\veps \leq C_n\veps.$  Now applying Lemma~\ref{lem:multiply-warped-ricci} and
\eqref{eq:gaussian-split-link-second-log-derivatives}, we get
\begin{align}
        R_r
        &=
        -\frac{a_\ta''}{a_\ta}
        -(n-2)\frac{c_\ta''}{c_\ta}\\
        &=
        e^{\frac{2\ta\veps}{n-1}\sin^2t}
        \left(
        n-1
        +
        \frac{\ta\veps}{n-1}
        \left(
        (2n+6)\sin^2t-4n
        \right)
        -
        \frac{4n(\ta\veps)^2}{n-1}
        \sin^2t\cos^2t
        \right)\\
        &\geq
        e^{\frac{2\ta\veps}{n-1}\sin^2t}
        \left(
        n-1
        -
        \frac{4n}{n-1}\ta\veps
        -
        \frac{n}{n-1}(\ta\veps)^2
        \right)\\
        &\geq
        n-1-C_n\veps = (n-1)\rho^2.
        \label{eq:gaussian-split-link-Rr}
\end{align}
Similarly, using Lemma~\ref{lem:multiply-warped-ricci},
\eqref{eq:gaussian-split-link-second-log-derivatives}, and
\eqref{eq:gaussian-split-link-cross-log-derivative}, we obtain
\begin{align}
        R_\theta
        &=
        -\frac{a_\ta''}{a_\ta}
        -(n-2)\frac{a_\ta'c_\ta'}{a_\ta c_\ta}\\
        &=
        e^{\frac{2\ta\veps}{n-1}\sin^2t}
        \left(
        n-1
        +
        \frac{\ta\veps}{n-1}
        \left(
        2(n^2+1)\sin^2t-4n
        \right)
        -
        \frac{8(\ta\veps)^2}{n-1}
        \sin^2t\cos^2t
        \right)\\
        &\geq
        e^{\frac{2\ta\veps}{n-1}\sin^2t}
        \left(
        n-1
        -
        \frac{4n}{n-1}\ta\veps
        -
        \frac{2}{n-1}(\ta\veps)^2
        \right)\\
        &\geq
        e^{\frac{2\ta\veps}{n-1}\sin^2t}
        \left(
        n-1
        -
        \frac{4n}{n-1}\ta\veps
        -
        \frac{n}{n-1}(\ta\veps)^2
        \right)\\
        &\geq
        n-1-C_n\veps=
        (n-1)\rho^2.
        \label{eq:gaussian-split-link-Rtheta}
\end{align}
Finally, since $\Ric_{\bbS^{n-2}}= (n-3)g_{\bbS^{n-2}},$ Lemma~\ref{lem:multiply-warped-ricci},
\eqref{eq:gaussian-split-link-first-log-derivatives}, and
\eqref{eq:gaussian-split-link-cross-log-derivative} imply that
\begin{align}
        R_S
        &=
        \frac{n-3}{c_\ta^2}
        -
        \frac{c_\ta''}{c_\ta}
        -
        (n-3)
        \left(
        \frac{c_\ta'}{c_\ta}
        \right)^2
        -
        \frac{a_\ta'c_\ta'}{a_\ta c_\ta}\\
        &=
        e^{\frac{2\ta\veps}{n-1}\sin^2t}
        \left(
        n-1
        +
        \ta\veps
        \left(
        \frac4{n-1}-2\sin^2t
        \right)
        +
        \frac{8(\ta\veps)^2}{(n-1)^2}
        \sin^2t\cos^2t
        \right)\\
        &\geq
        e^{\frac{2\ta\veps}{n-1}\sin^2t}
        \left(
        n-1-2\ta\veps
        \right)\\
        &\geq
        n-1-C_n\veps= (n-1)\rho^2.
        \label{eq:gaussian-split-link-RS}
\end{align}
Since $g_\ta$ is smooth on $\bbS^n$, the preceding Ricci curvature
inequalities extend to the two singular orbits by continuity. Hence
\begin{align}
        \Ric_{g_\ta}
        \geq
        (n-1)\rho^2g_\ta
\end{align}
on all of $\bbS^n$. Note that when $\ta=0$, then $g_\ta=g_\can$ and the above expressions reduce to
\begin{align}
        R_r=R_\theta=R_S=n-1
\end{align}
as expected. 

Set $\bar g:=g_1$ and
$f_q(z,y):=\Re(z^q)=\sin^qt\cos(q\theta)$. For every $j\geq0$, define
\begin{align}
 Q_j:=\int_{\bbS^n}\sin^{2j}t\,\dvol_{g_\can},\quad D_n:=2+\frac{2\eee}{(n-1)^2}.
\end{align}
Since $\frac{C_n}{n-1}<2$, we may choose an integer $q\geq2$ sufficiently large such that
\[
        \eta_{n,q}
        :=
        2q^2
        \left(
        1-\frac1{2q+n+1}
        \right)
        -
        \frac{C_n}{n-1}q(q+n-1)
        >
        0.
\]
Fix this value of $q$. For every sufficiently small $\veps>0$ satisfying all
the preceding conditions, we may assume that
$D_nq^2\veps<\frac12\eta_{n,q}$. Define
\begin{align}\label{eq:gaussian-split-s-definition}
        s^2
        :=
        \rho^2
        -
        \frac{\eta_{n,q}}{4q(q+n-1)}\veps=
        1-
        \left(
        \frac{C_n}{n-1}
        +
        \frac{\eta_{n,q}}{4q(q+n-1)}
        \right)\veps.
\end{align}
After decreasing $\veps>0$ if necessary, the right-hand side is positive.
Moreover, since $\eta_{n,q}>0$, we have $0<s<\rho$.

We now estimate the Rayleigh quotient \eqref{eq:gaussian-split-link-Rayleigh}. To this end, first observe that 
\begin{align}\label{eqn:rayleigh-estimate-helper}
  \int_{\bbS^n}f_q^2\,\dvol_{\bar g} &= \frac12Q_q  \\
  \int_{\bbS^n}|\nabla f_q|_{\bar g}^2\,\dvol_{\bar g}
        &=
        \frac{q^2}{2}
        \int_{\bbS^n}
        \sin^{2q-2}t
        \left(
        \cos^2t\,
        e^{\frac{2\veps}{n-1}\sin^2t}
        +
        e^{-2\veps\sin^2t}
        \right)
        \dvol_{g_\can}\\
        &\leq  \frac{q^2}{2}\int_{\bbS^n} \sin^{2q-2} t \left(2-\sin^2t
        +
        2\veps\sin^2t
        \left(
        \frac{\cos^2t}{n-1}-1
        \right)
        +
        D_n\veps^2\sin^4t\right) \dvol_{g_\can}\\
        &= \frac{q^2}{2}
        \left[
        2Q_{q-1}-Q_q
        +
        2\veps
        \left(
        \frac{Q_q-Q_{q+1}}{n-1}-Q_q
        \right)
        +
        D_n\veps^2Q_{q+1}
        \right],
\end{align}
where in the inequality above we used the pointwise inequality
$\eee^y\leq1+y+\frac \eee 2y^2$ and $\eee^{-y}\leq1-y+\frac12y^2$ for $0\leq y\leq1$, together with
$0<\veps\leq1/2$ and $D_n=2+\frac{2\eee}{(n-1)^2}$. Also, in the last equality above we used
\[
        \partial_tf_q
        =
        q\sin^{q-1}t\cos t\cos(q\theta),
        \qquad
        \partial_\theta f_q
        =
        -q\sin^qt\sin(q\theta),
\]
and
\[
        \int_0^{2\pi}\cos^2(q\theta)\,\ud\theta
        =
        \int_0^{2\pi}\sin^2(q\theta)\,\ud\theta
        =
        \pi.
\]
Dividing the preceding numerator estimate by
$\frac12Q_q$ gives
\begin{align}\label{eqn:R-estimate}
        \cR_{\bar g}(f_q)
        &\leq
        \frac{q^2}{Q_q}
        \left[
        2Q_{q-1}-Q_q
        +
        2\veps
        \left(
        \frac{Q_q-Q_{q+1}}{n-1}-Q_q
        \right)
        +
        D_n\veps^2Q_{q+1}
        \right]
        \notag\\
        &\leq
        q(q+n-1)
        -
        2\veps q^2
        \left(
        1-\frac1{2q+n+1}
        \right)
        +
        D_nq^2\veps^2\\
        &=
        \rho^2q(q+n-1)
        -
        \eta_{n,q}\veps
        +
        D_nq^2\veps^2\\
        &<
        \rho^2q(q+n-1)
        -
        \frac12\eta_{n,q}\veps\\
        &<
        \rho^2q(q+n-1)
        -
        \frac14\eta_{n,q}\veps=
        s^2q(q+n-1),
\end{align}

For the third line in the Rayleigh quotient estimate, we used
\begin{align}
        2Q_{q-1}-Q_q
        =
        \frac{q+n-1}{q}Q_q, \quad 
        \frac{Q_q-Q_{q+1}}{(n-1)Q_q}
        =
        \frac1{2q+n+1},\quad 
        Q_{q+1}
        \leq
        Q_q.
\end{align}
The first two identities in the above display follow from
$\frac{Q_j}{Q_{j-1}}=\frac{2j}{2j+n-1}$, while the last inequality in the above display follows
from $0\leq\sin^2t\leq1$. We next estimate
\begin{align}
        A(\bar g,s)(q-1)
        &=
        \max\left\{
        1,
        \sup_{\substack{\omega\in\bbS^n\\
        0\neq\xi\in T_\omega^*\bbS^n}}
        \frac{s^{-2}|\xi|_{\bar g}^2}
        {|\xi|_{g_\can}^2}
        \right\}
        (q-1)
        \leq
        \max\left\{
        1,
        s^{-2}e^{2\veps/(n-1)}
        \right\}
        (q-1)
        <
        q.
\end{align}
To justify the preceding estimate, write
\begin{align}
        \xi
        =
        \xi_t\,\ud t+\xi_\theta\,\ud\theta+\xi_S,
        \qquad
        \xi_S\in T_\omega^*\bS^{n-2}.
\end{align}
Then since  $\bar g
        =
        e^{-\frac{2\veps}{n-1}\sin^2t}
        \left(
        \ud t^2+\cos^2t\,g_{\bbS^{n-2}}
        \right)
        +
        e^{2\veps\sin^2t}
        \sin^2t\,\ud\theta^2,$
we have
\begin{align}
        |\xi|_{\bar g}^2
        =
        e^{\frac{2\veps}{n-1}\sin^2t}
        \left(
        \xi_t^2+
        \frac{|\xi_S|_{g_{\bbS^{n-2}}}^2}{\cos^2t}
        \right)
        +
        e^{-2\veps\sin^2t}
        \frac{\xi_\theta^2}{\sin^2t}
        &\leq
        e^{2\veps/(n-1)}
        \left(
        \xi_t^2+
        \frac{|\xi_S|_{g_{\bbS^{n-2}}}^2}{\cos^2t}
        +
        \frac{\xi_\theta^2}{\sin^2t}
        \right)
        \notag\\
        &=
        e^{2\veps/(n-1)}
        |\xi|_{g_\can}^2.
\end{align}
By \eqref{eq:gaussian-split-s-definition}, $s\to1$ as
$\veps\to 0 $. Therefore, as $\veps \to 0$
\begin{align}
        \max\left\{
        1,
        s^{-2}e^{2\veps/(n-1)}
        \right\}
        (q-1)
        \rightarrow
        q-1
        <
        q,
\end{align}
which proves
\begin{align}
        A(\bar g,s)(q-1)
        <
        q
\end{align}
for all sufficiently small $\veps>0$.
\end{proof}
Now, we will use the metric $\bar{g}$ constructed in the previous lemma and construct a cone over that metric with a suitable potential. The key point is to obtain a new weighted space with a suitable spectral property that will be needed for our counterexample. 
\begin{lem}\label{lem:gaussian-split-polynomial-cone}
Let $n\geq1$ and $q\geq2$. Let $g_0=g_\can$ be the canonical round metric
on $\bbS^n$, and let $\bar g$ be a metric on $\bbS^n$ satisfying
\begin{align}
        R_\vartheta^*\bar g
        =
        \bar g
        \qquad
        \text{for every }\vartheta\in\bbR,
        \qquad
        \dvol_{\bar g}
        =
        \dvol_{g_0}.
\end{align} 
Suppose that $s>0$ satisfies
\begin{align}\label{eq:gaussian-split-polynomial-assumptions}
        A(\bar g,s)(q-1)<q,
        \qquad
        \cR_{\bar g}(f_q)<s^2q(q+n-1),
\end{align}
where $f_q(z,y)=\Re(z^q)$. Then there exist $\alpha>1$ and a vector space
$E_C$ of ordinary polynomials on $\bbR^{n+1}$ such that, upon identifying
$\bbR^{n+1}\setminus\{0\}$ with $(0,\infty)\times\bbS^n$ by $x=r\omega$, we
have
\begin{align}\label{eq:gaussian-split-polynomial-dimension}
        \dim E_C=N_q+1,
        \qquad
        N_q:=\binom{n+q}{n+1},
\end{align}
and
\begin{align}\label{eq:gaussian-split-polynomial-Rayleigh}
        \sup_{0\neq p\in E_C}
        \cR_{g_C,V_C}(p)<q,
\end{align}
where
\begin{align}\label{eq:gaussian-split-polynomial-cone}
        C=(0,\infty)\times\bbS^n,
        \qquad
        g_C=\ud r^2+s^2r^2\bar g,
        \qquad
        V_C=\frac{\alpha r^2}{2}.
\end{align}
Moreover,
\begin{align}\label{eq:gaussian-split-Gaussian-index}
        \lam_{N_q+1}
        (\bbR^{n+1},|\cdot|,\gamma^{n+1})=q.
\end{align}
\end{lem}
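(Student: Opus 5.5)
The plan is to take $E_C$ to be the span of all ordinary polynomials of degree at most $q-1$ on $\bbR^{n+1}$ together with the single extra polynomial $p_q:=\Re(z^q)=r^qf_q(\omega)$. I would bound the Rayleigh quotient on the two pieces separately and then decouple them using the rotation symmetry $R_\vartheta$, as in step (iii) of the proof of Theorem~\ref{thm:sphere-main}.

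\textbf{Gaussian index.} The Ornstein--Uhlenbeck operator on $\bbR^{n+1}$ has eigenvalue $k$ on the Hermite polynomials of degree exactly $k$. The eigenvalues below $q$ are therefore exhausted by the polynomials of degree $\leq q-1$, a space of dimension $\binom{n+1+q-1}{n+1}=\binom{n+q}{n+1}=N_q$. Hence $\lam_{N_q+1}=q$, which is \eqref{eq:gaussian-split-Gaussian-index}.

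\textbf{The measure and dimension count.} Since $\dvol_{\bar g}=\dvol_{g_0}$, the weighted cone measure is
\begin{align}
        e^{-V_C}\dvol_{g_C}
        =
        s^n e^{-\alpha r^2/2}r^n\,\ud r\,\dvol_{g_0}.
\end{align}
This is a constant multiple of the Euclidean Gaussian $\gamma_{\alpha}^{n+1}$ with variance $1/\alpha$. In particular all polynomials lie in $W^{1,2}$. The polynomials of degree $\le q-1$ span a space of dimension $N_q$, and $p_q$ has degree $q$, so it is independent of them and $\dim E_C=N_q+1$.

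\textbf{Low-degree piece.} For a covector $\xi=\xi_r\,\ud r+\xi_\omega$, we have
\begin{align}
        |\xi|_{g_C}^2=\xi_r^2+s^{-2}r^{-2}|\xi_\omega|^2_{\bar g}
        \leq A(\bar g,s)\bigl(\xi_r^2+r^{-2}|\xi_\omega|^2_{g_0}\bigr)
        =A(\bar g,s)|\xi|^2_{\mathrm{eucl}},
\end{align}
where the inequality uses $A\geq1$. Hence, for any polynomial $p$ of degree $\leq q-1$,
\begin{align}
        \cR_{g_C,V_C}(p)\leq A(\bar g,s)\,\cR_{\mathrm{eucl},\gamma_\alpha}(p).
\end{align}
By rescaling the Gaussian spectrum, $\cR_{\mathrm{eucl},\gamma_\alpha}(p)\leq\alpha(q-1)$. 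This gives $\cR_{g_C,V_C}(p)\leq \alpha A(\bar g,s)(q-1)$, which is $<q$ once $\alpha>1$ is close enough to $1$, by the first assumption in \eqref{eq:gaussian-split-polynomial-assumptions}.

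\textbf{The extra polynomial.} Write $M_m:=\int_0^\infty r^m e^{-\alpha r^2/2}\,\ud r$, so that $M_m/M_{m+2}=\alpha/(m+1)$. Then
\begin{align}
        |\nabla p_q|^2_{g_C}=q^2r^{2q-2}f_q^2+s^{-2}r^{2q-2}|\nabla f_q|^2_{\bar g}.
\end{align}
Integrating in $r$ and $\omega$ gives
\begin{align}
        \cR_{g_C,V_C}(p_q)
        =\frac{\alpha}{2q+n-1}\bigl(q^2+s^{-2}\cR_{\bar g}(f_q)\bigr)
        <\frac{\alpha\bigl(q^2+q(q+n-1)\bigr)}{2q+n-1}=\alpha q.
\end{align}
The inequality uses the second assumption in \eqref{eq:gaussian-split-polynomial-assumptions}. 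Because it is strict, there is room to pick $\alpha>1$ with $\cR_{g_C,V_C}(p_q)<q$. I fix one $\alpha>1$ satisfying both smallness requirements.

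\textbf{Decoupling.} For $p$ of degree $\leq q-1$ we need both $\int p\,p_q\,e^{-V_C}\dvol_{g_C}=0$ and $\int\la\nabla p,\nabla p_q\ra_{g_C}e^{-V_C}\dvol_{g_C}=0$. Both $g_C$ and the measure are invariant under $R_\vartheta$, since $R_\vartheta^*\bar g=\bar g$ and $V_C$ is radial. Expanding $p$ in monomials $z^a\bar z^b y^\gamma$ with $a+b\leq q-1$, each integrand has $\theta$-frequency $a-b\pm q\neq0$, so each integral vanishes. Then, exactly as in the proof of Theorem~\ref{thm:sphere-main},
\begin{align}
        \sup_{0\neq p\in E_C}\cR_{g_C,V_C}(p)\leq\max\{\alpha A(q-1),\ \cR_{g_C,V_C}(p_q)\}<q.
\end{align}

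\textbf{Main obstacle.} The delicate point is the low-degree piece. Polynomials of different degrees are not orthogonal for the deformed cone, so one cannot argue degree by degree. The comparison $g_C^{-1}\leq A\cdot\mathrm{eucl}^{-1}$, combined with the identical $L^2$ structure supplied by volume preservation, avoids this problem. It is also where the hypothesis $A(\bar g,s)(q-1)<q$ is used.
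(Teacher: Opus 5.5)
Your proposal is correct and follows the paper's proof essentially step for step. It uses the same space $\cP_{<q}\oplus\spn\{r^qf_q\}$, the same covector comparison via $A(\bar g,s)$ combined with the Hermite bound $q-1$, the same exact formula $\cR_{g_C,V_C}(P_q)=\frac{\alpha}{n+2q-1}\bigl(q^2+s^{-2}\cR_{\bar g}(f_q)\bigr)$, and the same rotation-invariance decoupling. The only difference is cosmetic: you work directly with the rescaled Gaussian $\gamma_\alpha^{n+1}$, whereas the paper first treats the potential $r^2/2$ and then recovers the factor $\alpha$ by dilating $\cP_{<q}$.
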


\begin{proof}
Let
\begin{align}
        \cP_{<q}
        :=
        \left\{
        p:\bbR^{n+1}\to\bbR:
        p\text{ is a polynomial of degree strictly less than }q
        \right\}.
\end{align}
Since the number of monomials in $n+1$ variables of degree $j$ is
$\binom{n+j}{j}$, we have
\begin{align}\label{eq:gaussian-split-polynomial-count}
        \dim\cP_{<q}
        =
        \sum_{j=0}^{q-1}\binom{n+j}{j}
        =
        \binom{n+q}{n+1}
        =
        N_q.
\end{align}

We first estimate the Gaussian Rayleigh quotient on $\cP_{<q}$. Let
$L_0:=-\Delta+\la x,\nabla\cdot\ra$ denote the Ornstein--Uhlenbeck operator. The multivariable Hermite polynomials
\begin{align}
        H_\beta(x)
        :=
        \prod_{j=1}^{n+1}H_{\beta_j}(x_j),
        \qquad
        \beta\in\bbN_0^{n+1},
\end{align}
form an orthogonal basis of $L^2(\gamma^{n+1})$ and satisfy
$L_0H_\beta=|\beta|H_\beta$ (cf. Section~2.1 in
\cite{milman2018spectral}). Moreover, the Hermite polynomials with
$|\beta|<q$ span $\cP_{<q}$. Therefore, if
$p=\sum_{|\beta|<q}c_\beta H_\beta$, then
\begin{align}
        \int_{\bbR^{n+1}}|\nabla p|^2\,\ud\gamma^{n+1}
        =
        \int_{\bbR^{n+1}}pL_0p\,\ud\gamma^{n+1}
        &=
        \sum_{|\beta|<q}
        |\beta||c_\beta|^2
        \|H_\beta\|_{L^2(\gamma^{n+1})}^2\\
        &\leq
        (q-1)
        \sum_{|\beta|<q}
        |c_\beta|^2
        \|H_\beta\|_{L^2(\gamma^{n+1})}^2\\
        &= 
        (q-1)
        \int_{\bbR^{n+1}}p^2\,\ud\gamma^{n+1}.
\end{align}
Consequently, for every nonzero polynomial $p\in \calP_{<q}$ the Rayleigh quotient satisfies
\begin{align}\label{eq:gaussian-split-lower-polynomial-Gaussian}
        \frac{
        \int_{\bbR^{n+1}}|\nabla p|^2\,\ud\gamma^{n+1}
        }{
        \int_{\bbR^{n+1}}p^2\,\ud\gamma^{n+1}
        }
        \leq
        q-1.
\end{align}
Since the multiplicity of the Gaussian eigenvalue $j$ is
$\binom{n+j}{j}$, exactly $\sum_{j=0}^{q-1}\binom{n+j}{j}=N_q$ eigenvalues, including the zero
eigenvalue and counting multiplicity, lie strictly below $q$. This proves
\eqref{eq:gaussian-split-Gaussian-index}.

We next estimate the Rayleigh quotient of the same polynomials on the cone
with potential $r^2/2$. For every $0\neq p\in\cP_{<q}$, we have
\begin{align}\label{eq:gaussian-split-lower-polynomial-cone-one}
        \cR_{g_C,r^2/2}(p)
        \leq
        A(\bar g,s)
        \frac{
        \int_{\bbR^{n+1}}|\nabla p|^2\,\ud\gamma^{n+1}
        }{
        \int_{\bbR^{n+1}}p^2\,\ud\gamma^{n+1}
        }
        \leq
        A(\bar g,s)(q-1)
        <
        q,
\end{align}
where for the first inequality in the above display, we used
\begin{align}
        e^{-r^2/2}\dvol_{g_C}
        =
        s^ne^{-r^2/2}r^n\,\ud r\,\dvol_{\bar g}
        =
        s^ne^{-r^2/2}r^n\,\ud r\,\dvol_{g_0}
        =
        s^n(2\pi)^{\frac{n+1}{2}}\,\ud\gamma^{n+1},
\end{align}
and
\begin{align}
        |\nabla p|_{g_C}^2
        =
        |\partial_rp|^2
        +
        \frac1{s^2r^2}|\nabla_{\bbS^n}p|_{\bar g}^2
        \leq
        A(\bar g,s)
        \left(
        |\partial_rp|^2
        +
        \frac1{r^2}|\nabla_{\bbS^n}p|_{g_0}^2
        \right)
        =
        A(\bar g,s)|\nabla p|^2,
\end{align}
where the final norm is the Euclidean norm written in polar coordinates. The second inequality in \eqref{eq:gaussian-split-lower-polynomial-cone-one} follows from \eqref{eq:gaussian-split-lower-polynomial-Gaussian}, while the last inequality follows from \eqref{eq:gaussian-split-polynomial-assumptions}. 

Now observe that after rescaling the polynomials in $\cP_{<q}$ we obtain
\begin{align}\label{eq:gaussian-split-polynomial-alpha-scaling}
        \sup_{0\neq p\in\cP_{<q}}
        \cR_{g_C,\alpha r^2/2}(p)
        =
        \alpha
        \sup_{0\neq p\in\cP_{<q}}
        \cR_{g_C,r^2/2}(p).
\end{align}
Therefore, the estimate \eqref{eq:gaussian-split-lower-polynomial-cone-one} implies
\begin{align}\label{eq:gaussian-split-lower-polynomial-cone-alpha}
        \sup_{0\neq p\in\cP_{<q}}
        \cR_{g_C,\alpha r^2/2}(p)
        \leq
        \alpha A(\bar g,s)(q-1),
\end{align}
for any $\alp>0.$ Now we aim to construct the vector space of polynomials $E_C$ as stated in the lemma. This space will consist of $\cP_{<q}$ and the following polynomial written in polar coordinates
\begin{align}\label{eq:gaussian-split-Pq-polar}
        P_q(r,\omega)=r^qf_q(\omega),
\end{align}
where $f_q(z,y):=\operatorname{Re}(z^q)$ for $(z,y)\in \bbS^{n}\subset \bbC\times \bbR^{n-1}$. We will now estimate the Rayleigh quotient of $P_q$. To this end, define for $m>1$
\begin{align}
        I_m(\alpha)
        :=
        \int_0^\infty r^me^{-\alpha r^2/2}\,\ud r.
\end{align}
Integration by parts implies
\begin{align}\label{eq:gaussian-split-Im-recurrence}
        I_m(\alpha)
        =
        \frac{m-1}{\alpha}I_{m-2}(\alpha)
        \qquad
        \text{for every }m>1.
\end{align}
Since
\begin{align}
        |\nabla P_q|_{g_C}^2
        =
        r^{2q-2}
        \left(
        q^2f_q^2+s^{-2}|\nabla f_q|_{\bar g}^2
        \right),
\end{align}
we obtain
\begin{align}\label{eq:gaussian-split-Pq-Rayleigh}
        \cR_{g_C,\alpha r^2/2}(P_q)
        &=
        \frac{
        \left(
        q^2\int_{\bbS^n}f_q^2\dvol_{\bar g}
        +s^{-2}\int_{\bbS^n}|\nabla f_q|_{\bar g}^2\dvol_{\bar g}
        \right)
        I_{n+2q-2}(\alpha)
        }{
        \left(\int_{\bbS^n}f_q^2\dvol_{\bar g}\right)
        I_{n+2q}(\alpha)
        }
        \notag\\
        &=
        \frac{\alpha}{n+2q-1}
        \left(
        q^2+s^{-2}\cR_{\bar g}(f_q)
        \right),
\end{align}
where in the last equality  we used
$I_{n+2q}(\alpha)=\frac{n+2q-1}{\alpha}I_{n+2q-2}(\alpha)$.
By \eqref{eq:gaussian-split-polynomial-assumptions},
\begin{align}
        q^2+s^{-2}\cR_{\bar g}(f_q)
        <
        q^2+q(q+n-1)
        =
        q(n+2q-1).
\end{align}
Therefore, if
\begin{align}\label{eq:gaussian-split-Bq}
        B_q
        :=
        \frac{q^2+s^{-2}\cR_{\bar g}(f_q)}{n+2q-1},
\end{align}
then
\begin{align}
        0<B_q<q,
        \qquad
        \cR_{g_C,\alpha r^2/2}(P_q)=\alpha B_q.
\end{align}

Since $q\geq2$, the quantities
$q/[A(\bar g,s)(q-1)]$ and $q/B_q$ are well-defined and strictly larger than
one. We may therefore choose
\begin{align}\label{eq:gaussian-split-alpha-choice}
        1<\alpha<
        \min\left\{
        \frac{q}{A(\bar g,s)(q-1)},
        \frac{q}{B_q}
        \right\}.
\end{align}
For this choice of $\alpha$, we have
\begin{align}\label{eq:gaussian-split-two-polynomial-bounds}
        \sup_{0\neq p\in\cP_{<q}}
        \cR_{g_C,V_C}(p)
        &\leq
        \alpha A(\bar g,s)(q-1)
        <q,\notag\\
        \cR_{g_C,V_C}(P_q)
        &=
        \alpha B_q
        <q.
\end{align}
It remains to show that $P_q$ is orthogonal to $\cP_{<q}$ with respect to the Gaussian-weighted $L^2$ and Dirichlet inner products on the cone. For $u,v \in W^{1,2} \left(C,g_C,e^{-V_C}\dvol_{g_C}\right)$, define
\begin{align}
        \cB_0(u,v)
        :=
        \int_Cuv\,e^{-V_C}\dvol_{g_C},
        \quad 
        \cB_1(u,v)
        :=
        \int_C
        \la\nabla u,\nabla v\ra_{g_C}
        e^{-V_C}\dvol_{g_C}.
\end{align} 
Then, observe that the transformation $R_\vartheta(r,z,y):=(r,e^{\iii\vartheta}z,y)$ preserves the metric $g_C$ and the potential $V_C$. Then arguing as in \eqref{eqn:change-of-variables} earlier, we obtain that
\begin{align}\label{eq:gaussian-split-polynomial-orthogonality}
        \int_CP_qp\,e^{-V_C}\dvol_{g_C}
        =0,\quad 
        \int_C\la\nabla P_q,\nabla p\ra_{g_C}
        e^{-V_C}\dvol_{g_C}
        =0,
\end{align}
for every $p\in \cP_{<q}.$ Set $E_C :=\cP_{<q}\oplus\spn\{P_q\}.$ By \eqref{eq:gaussian-split-polynomial-count}, $\dim E_C=N_q+1$, which proves
\eqref{eq:gaussian-split-polynomial-dimension}. Finally, if
$0\neq u=p+aP_q\in E_C$, where $p\in\cP_{<q}$ and $a\in\bbR$, then
\begin{align}
        \cR_{g_C,V_C}(u)
        &=
        \frac{
        \int_C|\nabla p|_{g_C}^2e^{-V_C}\dvol_{g_C}
        +a^2\int_C|\nabla P_q|_{g_C}^2e^{-V_C}\dvol_{g_C}
        }{
        \int_Cp^2e^{-V_C}\dvol_{g_C}
        +a^2\int_CP_q^2e^{-V_C}\dvol_{g_C}
        }
        \notag\\
        &\leq
        \max\left\{
        \sup_{0\neq p\in\cP_{<q}}\cR_{g_C,V_C}(p),
        \cR_{g_C,V_C}(P_q)
        \right\}
        <q.
\end{align}
Here the first equality follows from
\eqref{eq:gaussian-split-polynomial-orthogonality}, and the last inequality
follows from \eqref{eq:gaussian-split-two-polynomial-bounds}. This proves
\eqref{eq:gaussian-split-polynomial-Rayleigh} and completes the proof.
\end{proof}

Unfortunately, the cone constructed above has a singularity at the origin. We now prove a lemma that will smooth this singularity without losing the curvature control on the metric and the potential and the spectral inequalities needed to prove Theorem~\ref{thm:gaussian-split-main}. We first recall a technical proposition, based on
\cite[Proposition~4.1]{barGauduchonMoroianu2005}, which will be useful for
computing the Ricci curvature of the smoothed metric.

\begin{prop}\label{prop:generalized-cylinder-curvature}
Let $M^n$ be a smooth manifold, let $I\subset\bbR$ be an interval, and let
$\{h_r\}_{r\in I}$ be a smooth family of Riemannian metrics on $M$. Set
\begin{align}
        Z
        :=
        I\times M,
        \qquad
        g^Z
        :=
        \ud r^2+h_r.
\end{align}
For $r\in I$, let $M_r:=\{r\}\times M$, let $\nu:=\partial_r$, and let
$W_r$ denote the Weingarten map of $M_r$ with respect to $\nu$, so that
\begin{align}\label{eq:generalized-cylinder-Weingarten}
        \la W_r(X),Y\ra_{h_r}
        =
        -\frac12\partial_rh_r(X,Y)
\end{align}
for $X,Y$ tangent to $M_r$. Then
\begin{align}\label{eq:generalized-cylinder-Ricci-radial}
        \Ric_{g^Z}(\partial_r,\partial_r)
        =
        \partial_r\Tr(W_r)-\Tr(W_r^2),
\end{align}
\begin{align}\label{eq:generalized-cylinder-Ricci-mixed}
        \Ric_{g^Z}(\partial_r,X)
        =
        X\bigl(\Tr(W_r)\bigr)
        -
        \la\div_{h_r}W_r,X\ra_{h_r},
\end{align}
and
\begin{align}\label{eq:generalized-cylinder-Ricci-tangential}
        \Ric_{g^Z}(X,Y)
        =
        \Ric_{h_r}(X,Y)
        +
        \la
        \left(
        \partial_rW_r-\Tr(W_r)W_r
        \right)X,
        Y
        \ra_{h_r}.
\end{align}
Here $\partial_rW_r$ is computed after identifying the tangent bundles of
the slices through the product structure.
\end{prop}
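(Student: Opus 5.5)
We prove Proposition~\ref{prop:generalized-cylinder-curvature} by the Gauss--Codazzi--Riccati equations for the foliation of $Z$ by the slices $M_r$. The normal field $\nu=\partial_r$ is a unit geodesic field, since $g^Z=\ud r^2+h_r$ makes $\nabla^Z_{\partial_r}\partial_r=0$ and $\partial_r\perp TM_r$. For $X$ tangent to a slice, the sign convention \eqref{eq:generalized-cylinder-Weingarten} means we set $W_r(X):=-\nabla^Z_X\nu$. We then check it in coordinates. Take $X,Y$ lifted from $M$, so that $[\partial_r,X]=0$. Then
\[
\la\nabla^Z_X\partial_r,Y\ra=\la\nabla^Z_{\partial_r}X,Y\ra=\tfrac12\partial_r h_r(X,Y),
\]
by the Koszul formula, and this gives exactly \eqref{eq:generalized-cylinder-Weingarten} with a minus sign.

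\textbf{Radial component.} Choose a local $h_r$-orthonormal frame and write $R(\partial_r,X)\partial_r$ using $[\partial_r,X]=0$ and $\nabla_{\partial_r}\partial_r=0$. This gives
\[
R(X,\partial_r)\partial_r=\nabla_X\nabla_{\partial_r}\partial_r-\nabla_{\partial_r}\nabla_X\partial_r=\nabla_{\partial_r}(W_rX).
\]
Taking the tangential part yields the Riccati identity $R(X,\nu)\nu=(\partial_rW_r)X+W_r^2X$, where $\partial_r W_r$ is the derivative in the product identification. The precise sign bookkeeping here must be consistent with the convention $\nabla_X\nu=-W_rX$. Tracing over $X$ gives $\Ric(\partial_r,\partial_r)=\pm(\Tr\partial_rW_r)\mp\ldots$. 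To match \eqref{eq:generalized-cylinder-Ricci-radial} we then use $\Tr(\partial_r W_r)=\partial_r\Tr(W_r)$. Note that $\partial_r(W_r)$ as an endomorphism has trace equal to the derivative of the trace, since trace is intrinsic to endomorphisms. We fix signs by testing on the cone $h_r=r^2g_{\can}$, where $W_r=-r^{-1}\Id$ and $\Ric(\partial_r,\partial_r)=0$. Indeed $\partial_r\Tr W-\Tr W^2=n r^{-2}-nr^{-2}=0$, which confirms the form.

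\textbf{Mixed component.} We use the Codazzi equation
\[
\la R(X,Y)\nu,Z\ra=\la(\nabla^{h_r}_YW)X-(\nabla^{h_r}_XW)Y,Z\ra,
\]
up to sign, where $W$ is $h_r$-self-adjoint. We trace over $Y=Z=e_i$, and self-adjointness turns one term into $X(\Tr W)$ and the other into $\la\div_{h_r}W,X\ra$. This gives \eqref{eq:generalized-cylinder-Ricci-mixed}.

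\textbf{Tangential component.} The Gauss equation gives
\[
\la R^Z(X,e_i)e_i,Y\ra=\la R^{h_r}(X,e_i)e_i,Y\ra+\la We_i,e_i\ra\la WX,Y\ra-\la WX,e_i\ra\la We_i,Y\ra,
\]
with signs as appropriate for our convention. Summing over $i$ yields $\Ric_{h_r}(X,Y)+\Tr(W)\la WX,Y\ra-\la W^2X,Y\ra$, up to overall sign of the $W$-terms. Adding the normal term $\la R^Z(X,\nu)\nu,Y\ra=\la(\partial_rW+W^2)X,Y\ra$ from the Riccati step cancels the $W^2$ terms. What remains is $\Ric_{h_r}+\la(\partial_rW-\Tr(W)W)X,Y\ra$, which is \eqref{eq:generalized-cylinder-Ricci-tangential}. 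Again we sanity-check on the cone, where $\Ric_{g^Z}(X,Y)=0$, $\Ric_{h_r}=(n-1)r^{-2}g_{\can}\cdot r^2/r^2$, $\partial_rW=r^{-2}\Id$ and $-\Tr(W)W=-nr^{-2}\Id$. In $h_r$-units this gives $(n-1)r^{-2}+r^{-2}-nr^{-2}=0$, as required.

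The only genuine obstacle is sign and convention bookkeeping, particularly what $\partial_rW_r$ means (endomorphism versus bilinear form). We resolve it by differentiating the endomorphism with fixed identification, and we fix all signs by the cone test above and by the cylinder $h_r\equiv h$, where every $W$-term vanishes.
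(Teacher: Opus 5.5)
The approach is sound, and it differs from the paper's. You treat $\{M_r\}$ as a geodesic foliation and read off the three components from the Riccati, Codazzi and Gauss equations. The paper derives nothing itself: it imports Proposition~4.1 of B\"ar--Gauduchon--Moroianu, which is stated in terms of $\partial_rh_r$ and $\partial_r^2h_r$, and converts it via $\partial_r\Tr(W_r)=2\Tr(W_r^2)-\frac12\Tr_{h_r}(\partial_r^2h_r)$ and $\frac12\partial_r^2h_r(X,Y)=2\langle W_rX,W_rY\rangle-\langle(\partial_rW_r)X,Y\rangle$.

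\textbf{The gap is the sign bookkeeping you set aside.} The proposition is essentially its signs, and your derivation does not establish them. Two of your displayed identities are false under your own convention $\nabla_X\nu=-W_rX$.

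For lifted coordinate fields $X_j$ you have $\nabla_{\partial_r}X_j=\nabla_{X_j}\partial_r=-W_rX_j$. (Use these rather than an $h_r$-orthonormal frame, which does not commute with $\partial_r$.) Hence
\[
R^Z(X,\partial_r)\partial_r=\nabla_{\partial_r}(W_rX)=(\partial_rW_r)X-W_r^2X,
\]
not $(\partial_rW_r+W_r^2)X$. On the flat cone your version gives $2r^{-2}X\neq0$.

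Likewise, with $\mathrm{II}(X,Y)=\langle\nabla_XY,\nu\rangle=\langle W_rX,Y\rangle$, the traced Gauss equation is
\[
\sum_i\langle R^Z(X,e_i)e_i,Y\rangle=\Ric_{h_r}(X,Y)-\Tr(W_r)\langle W_rX,Y\rangle+\langle W_r^2X,Y\rangle .
\]
This is the opposite sign of your display. Adding your two displayed identities gives $\Ric_{h_r}(X,Y)+\langle(\partial_rW_r+\Tr(W_r)W_r)X,Y\rangle$. So the last line of your tangential step does not follow from what precedes it.

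Fixing signs by examples cannot repair this as you describe it:
\begin{itemize}
\item On the cone, $\partial_r\Tr(W_r)=\Tr(W_r^2)=nr^{-2}$, so it cannot distinguish $\pm(\partial_r\Tr W_r-\Tr W_r^2)$.
\item The cylinder $h_r\equiv h$ kills every $W$-term.
\item The mixed component is never tested.
\end{itemize}
A non-degenerate check of the radial sign is $\ud r^2+\sin^2r\,g_{\can}$, where $W_r=-\cot r\,\Id$ gives $n(\csc^2r-\cot^2r)=n$.

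The fix is to carry the signs through directly:
\begin{itemize}
\item Tracing the corrected Riccati identity with $\Ric(V,V)=\sum_a\langle R^Z(e_a,V)V,e_a\rangle$ gives the radial formula.
\item Adding the corrected Gauss and Riccati contributions cancels $W_r^2$ and gives the tangential formula. Note that $\partial_rW_r$ is $h_r$-self-adjoint (differentiate $h_r(W_rX,Y)$ in $r$), so the order of $X,Y$ in the normal term does not matter.
\item Your Codazzi identity is correct as written. Tracing it yields $X(\Tr W_r)-\langle\div_{h_r}W_r,X\rangle$ with a definite sign, so no ``up to sign'' is needed there.
\end{itemize}
With these corrections your argument becomes a complete, self-contained proof.
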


\begin{proof}
These identities follow from Proposition~4.1 in \cite{barGauduchonMoroianu2005}. To see this, first using
\eqref{eq:generalized-cylinder-Weingarten} we obtain
\begin{align}
        \partial_r\Tr(W_r)
        =
        2\Tr(W_r^2)
        -
        \frac12\Tr_{h_r}(\partial_r^2h_r),
\end{align}
which transforms equation~(18) in Proposition~4.1 in \cite{barGauduchonMoroianu2005} into
\eqref{eq:generalized-cylinder-Ricci-radial}. Moreover, differentiating \eqref{eq:generalized-cylinder-Weingarten} gives
\begin{align}
        \frac12\partial_r^2h_r(X,Y)
        =
        2\la W_r(X),W_r(Y)\ra_{h_r}
        -
        \la(\partial_rW_r)(X),Y\ra_{h_r}.
\end{align}
Substituting this identity into equation~(20) in Proposition~4.1 in \cite{barGauduchonMoroianu2005} gives \eqref{eq:generalized-cylinder-Ricci-tangential}, while equation~(19) in Proposition~4.1 in \cite{barGauduchonMoroianu2005} gives \eqref{eq:generalized-cylinder-Ricci-mixed}.
\end{proof}

\begin{lem}\label{lem:gaussian-split-smoothing}
Let $n,q,s,\rho,\bar g$, and $\{g_\ta\}_{\ta\in[0,1]}$ be the objects
constructed in Lemma~\ref{lem:gaussian-split-link}, and let
$\alpha$, $(C,g_C,V_C)$, $E_C$, and $N_q$ be the objects constructed in
Lemma~\ref{lem:gaussian-split-polynomial-cone}. Then, for every sufficiently
small $\eta>0$, there exist a smooth complete metric $g_\eta$ on
$\bbR^{n+1}$, a smooth function $V_\eta:\bbR^{n+1}\to\bbR$, and a
finite-dimensional vector space
\begin{align}
        E_\eta
        \subset
        C^\infty(\bbR^{n+1})
        \cap
        W^{1,2}
        \left(
        \bbR^{n+1},g_\eta,e^{-V_\eta}\dvol_{g_\eta}
        \right)
\end{align}
such that
\begin{align}\label{eq:gaussian-split-smoothed-curvature}
        \Ric_{g_\eta}\geq0,
        \quad
        \nabla_{g_\eta}^2V_\eta\geq g_\eta,
\end{align}
$\dim E_\eta=N_q+1,$ and
\begin{align}\label{eq:gaussian-split-smoothed-Rayleigh}
        \sup_{0\neq f\in E_\eta}
        \cR_{g_\eta,V_\eta}(f)
        <
        q.
\end{align}
Moreover,
\begin{align}\label{eq:gaussian-split-smoothed-finite-mass}
        \int_{\bbR^{n+1}}e^{-V_\eta}\dvol_{g_\eta}<\infty.
\end{align}
% and the embedding
% \begin{align}\label{eq:gaussian-split-smoothed-compact-embedding}
%         W^{1,2}
%         \left(
%         \bbR^{n+1},g_\eta,e^{-V_\eta}\dvol_{g_\eta}
%         \right)
%         \hookrightarrow
%         L^2
%         \left(
%         \bbR^{n+1},e^{-V_\eta}\dvol_{g_\eta}
%         \right)
% \end{align}
% is compact. In particular, the corresponding weighted Laplacian has compact
% resolvent.
\end{lem}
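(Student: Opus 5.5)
The plan is to build $g_\eta$ as a rotationally "generalized cylinder" $\ud r^2+h_r$ over $\bbS^n$ that is exactly Euclidean near the origin and is (up to a translation in the warping function) the cone $g_C$ outside a ball of radius $O(\eta)$. Concretely I would set
\begin{align}
        g_\eta=\ud r^2+\varphi(r)^2\,g_{\ta(r)},
\end{align}
where $\{g_\ta\}$ is the volume-preserving, trace-free, $R_\vartheta$-invariant family from Lemma~\ref{lem:gaussian-split-link}. The warping function $\varphi$ is smooth and concave, with $\varphi(r)=r$ on $[0,\eta]$ and $\varphi'$ nonincreasing from $1$ to $s$, and $\varphi'\equiv s$ for $r\geq r_1(\eta)$, where $r_1(\eta)\to0$. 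The parameter $\ta(r)$ is nondecreasing, equal to $0$ near $0$ and to $1$ for $r\geq r_1(\eta)$. The transition is done in two stages.

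In the first stage $\ta\equiv0$ and $\varphi'$ decreases from $1$ to some $s_1$ with $s<s_1<\rho$. This is the standard concave bending of a round link: $1-\varphi'^2\geq0$ and $\varphi''\leq0$ give $\Ric\geq0$.

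In the second stage $\ta$ goes from $0$ to $1$ slowly in $\log r$, so that $|\ta'|\leq 1/(Lr)$ with $L$ large. Simultaneously $\varphi'$ decreases from $s_1$ to $s$ over the same logarithmic window, so that $-\varphi''\gtrsim (s_1-s)/(Lr)$.

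For the curvature I would apply Proposition~\ref{prop:generalized-cylinder-curvature} with $h_r=\varphi^2g_{\ta(r)}$. Since $\tr_{g_\ta}\partial_\ta g_\ta=0$, the Weingarten map splits as
\begin{align}
        W_r=-\frac{\varphi'}{\varphi}\Id+A_r,
\end{align}
where $A_r=-\tfrac12\ta'\,g_\ta^{-1}\partial_\ta g_\ta$ is trace-free with $|A_r|\leq C/(Lr)$. The three Ricci components then behave as follows.

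\emph{Radial.} $\Tr W_r=-n\varphi'/\varphi$ and $\Tr W_r^2=n\varphi'^2/\varphi^2+|A_r|^2$, so
\begin{align}
        \Ric(\partial_r,\partial_r)=-n\frac{\varphi''}{\varphi}-|A_r|^2\geq \frac{c(s_1-s)}{Lr^2}-\frac{C}{L^2r^2}\geq0
\end{align}
for $L$ large. This is exactly why $\varphi$ must keep bending while $\ta$ moves: on a pure cone the trace-free part alone would make $\Ric(\partial_r,\partial_r)$ negative.

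\emph{Tangential.} The main term is
\begin{align}
        \frac{\Ric_{g_\ta}-(n-1)\varphi'^2g_\ta}{\varphi^2}-\frac{\varphi''}{\varphi}\geq\frac{(n-1)(\rho^2-s_1^2)}{\varphi^2},
\end{align}
using Lemma~\ref{lem:gaussian-split-link}. The error terms $\partial_rA_r$ and $\Tr(W_r)A_r$ are $O(1/(Lr^2))$, and they are absorbed because $\varphi\sim r$.

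\emph{Mixed.} $X(\Tr W_r)=0$ and $\div_{h_r}A_r=O(1/(Lr^2))$. A Cauchy--Schwarz argument against the diagonal lower bounds, which are of order $1/(Lr^2)$ and $1/r^2$, gives $\Ric_{g_\eta}\geq0$ for $L$ large.

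Finally I would rescale the whole transition to the scale $\eta$. All the estimates above are scale invariant, so $r_1(\eta)=C_L\eta$.

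For the potential I would take $V_\eta=\alpha\Phi(r)$ with $\Phi(0)=0$ and
\begin{align}
        \Phi'=\frac{\varphi}{\varphi'}.
\end{align}
This is smooth, since $\Phi=r^2/2$ near $0$. Its Hessian satisfies:
\begin{align}
        \nabla^2V_\eta(\partial_r,\partial_r)&=\alpha\Bigl(1-\frac{\varphi\varphi''}{\varphi'^2}\Bigr)\geq\alpha,\\
        \nabla^2V_\eta|_{\text{tangential}}&=-\alpha\Phi'\,\la W_r\cdot,\cdot\ra=\alpha\bigl(\Id-\tfrac{\varphi}{\varphi'}A_r\bigr)\geq\alpha\bigl(1-\tfrac{C}{sL}\bigr),
\end{align}
and the mixed terms vanish because $V_\eta$ is radial. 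Since $\alpha>1$ by Lemma~\ref{lem:gaussian-split-polynomial-cone}, choosing $L$ large gives $\nabla^2V_\eta\geq g_\eta$. For $r\geq r_1$ one has $\varphi=sr+b_\eta$ with $0\leq b_\eta=O(\eta)$, so
\begin{align}
        V_\eta=\frac{\alpha r^2}{2}+\frac{\alpha b_\eta r}{s}+O(\eta^2).
\end{align}
This gives a Gaussian tail, hence \eqref{eq:gaussian-split-smoothed-finite-mass}. Completeness of $g_\eta$ is clear, since $g_\eta$ is conical at infinity.

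For the test space I take $E_\eta:=E_C$, viewed as functions on $\bbR^{n+1}$ through $x=r\omega$. These are ordinary polynomials, and $g_\eta$ is Euclidean near $0$, so they are smooth, and they lie in $W^{1,2}$ because of the Gaussian weight. As $\eta\to0$ we have:
\begin{itemize}
\item $g_\eta\to g_C$ on $\{r\geq r_1(\eta)\}$, and pointwise on $\{r>0\}$;
\item $g_\eta$ is uniformly comparable to $g_C$;
\item $e^{-V_\eta}\to e^{-V_C}$ locally uniformly, with a common Gaussian majorant.
\end{itemize}
By dominated convergence, the Gram matrices of $\int fg\,e^{-V_\eta}\dvol_{g_\eta}$ and $\int\la\nabla f,\nabla g\ra e^{-V_\eta}\dvol_{g_\eta}$ on the finite-dimensional space $E_C$ converge to their cone counterparts. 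Hence $\sup_{E_\eta}\cR_{g_\eta,V_\eta}$ converges to the cone supremum, which is $<q$ by \eqref{eq:gaussian-split-polynomial-Rayleigh}, and \eqref{eq:gaussian-split-smoothed-Rayleigh} follows for $\eta$ small. The dimension $\dim E_\eta=N_q+1$ is preserved in the limit.

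The main obstacle is the curvature bookkeeping in the interpolation stage. The trace-free variation of the link contributes $-|A_r|^2$ to the radial Ricci curvature and $-\alpha\Phi' A_r$ to the tangential Hessian of $V_\eta$. These must be beaten simultaneously by the concavity of $\varphi$, which uses the margin $\rho>s_1>s$, and by $\alpha>1$, uniformly in $L$. I would first verify these estimates carefully with the explicit $\log r$-scale choice of $\ta$ and $\varphi'$ described above.
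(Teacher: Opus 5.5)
Your construction of $g_\eta$ and $V_\eta$ is the paper's. The two-stage profile (first bend $\varphi'$ from $1$ to $s_1\in(s,\rho)$, then move the link parameter on a $\log r$-window of length $L$ while $\varphi$ keeps bending) is the paper's $\psi_T,\chi_T$, with $L\leftrightarrow T$ and $s_1\leftrightarrow s+\delta_*/4$. The trace-free splitting of $W_r$, the three Ricci estimates, the potential $V_\eta'=\alpha\varphi/\varphi'$ and the rescaling to scale $\eta$ all match steps (i)--(iv). One precision: make sure $\operatorname{supp}\tau'$ lies strictly inside the region where $-\varphi''\geq c(s_1-s)/(Lr)$. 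The paper does this by taking $\operatorname{supp}\chi'\subset[2,3]$ with $-\psi'\geq c_0\delta_*$ on $[2,3]$. If your two windows literally coincide, then $\tau'$ and $\varphi''$ both vanish at the endpoints, and absorbing $|A_r|^2$ and the mixed term is no longer automatic.

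The genuine difference is how you transfer the test space. The paper works in the shifted coordinate $R=r+b\eta$. In that coordinate $(g_\eta,V_\eta-c_\eta)$ is exactly $(g_C,V_C)$ off the cap, and the test functions are $\zeta_\eta(R)\,p(R\omega)$. The cutoff $\zeta_\eta$ is necessary there, because $p(R\omega)$ does not extend smoothly over the cap. All errors are then confined to $\{R\leq 2R_0\eta\}$ and are $O(\eta^{n+1})$ and $O(\eta^{n-1})$.

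You instead keep the polynomials unchanged in $x=r\omega$. They are smooth because $g_\eta$ is Euclidean near the origin, and you pass to the limit by dominated convergence. This is valid and somewhat simpler: there is no cutoff, no cap estimate, and $\dim E_\eta=N_q+1$ comes for free.

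You should, however, state what the domination rests on:
\begin{itemize}
\item the bounds $sr\leq\varphi\leq r$;
\item uniform equivalence of $g_\tau$ and $g_{\mathrm{can}}$ for $\tau\in[0,1]$;
\item the identity $\mathrm{dvol}_{g_\tau}=\mathrm{dvol}_{g_{\mathrm{can}}}$;
\item the bound $V_\eta\geq\alpha r^2/2$. This holds because concavity together with $\varphi(0)=0$ gives $\varphi\geq r\varphi'$, i.e.\ $V_\eta'\geq\alpha r$.
\end{itemize}

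Also, $g_\eta\to g_C$ is not uniform on $\{r\geq r_1(\eta)\}$, since $b_\eta/(s\,r_1(\eta))$ does not tend to zero. Only pointwise convergence for each fixed $r>0$ holds, but that is all dominated convergence needs. In exchange, the paper's route gives exact identities on the conical end and explicit error rates.
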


\begin{proof}
For notational convenience, set $E:=E_C$,  $\Lambda_E:=\sup_{0\neq p\in E_C} \cR_{g_C,V_C}(p)$ and $\delta_*:=\rho-s>0$. By \eqref{eq:gaussian-split-polynomial-Rayleigh}, we have $\Lambda_E<q.$ Set $P_\ta := g_\ta^{-1}\partial_\ta g_\ta.$ By Lemma~\ref{lem:gaussian-split-link}, for every $\ta\in[0,1]$ we have
\begin{align}\label{eq:gaussian-split-smoothing-Ricci-hypothesis}
        \Ric_{g_\ta}
        \geq
        (n-1)\rho^2g_\ta,\text{ and } \Tr(P_\ta)=0.
\end{align}
Our argument will proceed in five steps.
\paragraph{$\mathrm{(i)}$ \textit{Construction of a smooth metric}} Choose smooth functions $\psi:\bbR\to[s,1]$ and $\chi:\bbR\to[0,1]$ such that
\begin{align}
        &\psi(t)=1
        \text{ for }t\leq0,
        \quad
        \psi(t)=s
        \text{ for }t\geq4,
        \quad
        \psi'\leq0,\quad 
        \psi(t)
        \leq
        s+\frac{\delta_*}{4}
        \text{ for }t\geq2, \qquad \label{eq:gaussian-split-profile-small}\\
        &\chi(t)=0 \text{ for }t\leq2,\quad 
        \chi(t)=1 \text{ for }t\geq3,\quad 
        \chi'(t)>0 \text{ for }2<t<3,\quad 
        -\psi'(t)
        \geq
        c_0\delta_*
        \text{ for }2\leq t\leq 3 \qquad \label{eq:gaussian-split-profile-negative}
\end{align}
for some constant $c_0>0$.  For $T\geq1$, define $\psi_T(t):=\psi(t/T)$ and $\chi_T(t):=\chi(t/T).$ For $r>0$, set
\begin{align}\label{eq:gaussian-split-phi-u-definition}
        \phi(r)
        :=
        \int_0^r \psi_T(\log\sigma)\,\ud\sigma,
        \qquad
        u(r):=\chi_T(\log r).
\end{align}
Since $\psi_T(\log\sigma)=1$ and $\chi_T(\log\sigma)=0$ for all sufficiently small $\sigma>0$, we have $\phi(r)=r$ and $u(r)=0$ near $r=0$. Moreover,
\begin{align}\label{eq:gaussian-split-phi-bounds}
        s\leq\phi'\leq1,
        \qquad
        \phi''\leq0,
        \qquad
        sr\leq\phi(r)\leq r.
\end{align}
There exists $b\geq0$ such that, for every $r\geq e^{4T}$,
\begin{align}\label{eq:gaussian-split-phi-end}
        u(r)=1,
        \qquad
        \phi(r)=s(r+b).
\end{align}
Define $ \widetilde g :=\ud r^2+\phi(r)^2g_{u(r)}.$ Since $\phi(r)=r$ and $g_{u(r)}=g_0=g_\can$ near the origin, the metric
$\widetilde g$ extends smoothly across the origin. Moreover,
$\widetilde g\geq\ud r^2$, and hence $\widetilde g$ is complete.

\paragraph{$\mathrm{(ii)}$ \textit{Ricci curvature lower bound}} Set
$h_r:=\phi(r)^2g_{u(r)}$, so that
$\widetilde g=\ud r^2+h_r$, and set
\begin{align}
        F_r
        :=
        \frac12u'(r)P_{u(r)}.
\end{align}
Since $\partial_\ta g_\ta$ is symmetric, $P_\ta$ is self-adjoint with
respect to $g_\ta$. Consequently, $F_r$ is self-adjoint with respect to
$h_r$. By \eqref{eq:generalized-cylinder-Weingarten}, the Weingarten map of the slice $M_r=\{r\}\times\bbS^n$ satisfies
\begin{align}\label{eq:gaussian-split-shape-operator}
        -W_r
        =
        \frac{\phi'}{\phi}\Id+F_r.
\end{align}
In particular, if
$\mathrm{II}(X,Y):=-h_r(W_r(X),Y)$, then
\begin{align}
        \mathrm{II}(X,Y)
        =
        h_r
        \left(
        \left(
        \frac{\phi'}{\phi}\Id+F_r
        \right)X,Y
        \right).
\end{align}
Since $\Tr(P_\ta)=0$, we have
$\Tr(F_r)=0$ and
$\Tr(W_r)=-n\phi'/\phi$. Therefore,
Proposition~\ref{prop:generalized-cylinder-curvature} gives
\begin{align}\label{eq:gaussian-split-Ricci-rr-exact}
        \Ric_{\widetilde g}(\partial_r,\partial_r)
        &=
        -n\frac{\phi''}{\phi}
        -
        \Tr(F_r^2),\notag\\
        \Ric_{\widetilde g}(\partial_r,X)
        &=
        \la\div_{h_r}F_r,X\ra_{h_r},\notag\\
        \Ric_{\widetilde g}(X,Y)
        &=
        \Ric_{g_{u(r)}}(X,Y)
        -
        \left(
        \frac{\phi''}{\phi}
        +(n-1)\frac{(\phi')^2}{\phi^2}
        \right)h_r(X,Y)
        -
        h_r
        \left(
        \left(
        \partial_rF_r
        +
        n\frac{\phi'}{\phi}F_r
        \right)X,Y
        \right)
\end{align}
for vectors $X,Y$ tangent to $M_r$. Recall that $\delta_*:=\rho-s>0$ and set
\begin{align}
        M
        :=
        1+
        \sup_{\ta\in[0,1]}
        \left(
        |P_\ta|_{C^1(g_\ta)}
        +
        |\partial_\ta P_\ta|_{C^0(g_\ta)}
        \right).
\end{align}
Let $\cA_T:=\supp u'$. In the remainder of this step, $c>0$ and
$C<\infty$ denote constants independent of $T$. Since
$u(r)=\chi((\log r)/T)$, we have
\begin{align}
        |u'(r)|
        &\leq
        \frac{C}{Tr},
        \qquad
        |u''(r)|
        \leq
        \frac{C}{Tr^2}.
\end{align}
Moreover,
\begin{align}
        \partial_rF_r
        &=
        \frac12u''(r)P_{u(r)}
        +
        \frac12(u'(r))^2
        \left.
        \partial_\ta P_\ta
        \right|_{\ta=u(r)},
        \notag\\
        \div_{h_r}F_r
        &=
        \frac12u'(r)
        \div_{h_r}P_{u(r)}.
\end{align}
Using these identities, $\phi\leq r$, and the fact that
$h_r=\phi^2g_{u(r)}$ is a constant rescaling on each slice, we obtain
\begin{align}\label{eq:gaussian-split-F-bound}
        |F_r|_{h_r}
        \leq
        \frac{CM}{T\phi},\quad 
        |\div_{h_r}F_r|_{h_r}
        \leq
        \frac{CM}{T\phi^2},\quad 
        \left|
        \partial_rF_r + n\frac{\phi'}{\phi}F_r
        \right|_{h_r}
        \leq
        \frac{CM}{T\phi^2}.
\end{align}
On $\cA_T$, we have $(\log r)/T\in[2,3]$. Therefore,
\eqref{eq:gaussian-split-profile-negative},
\eqref{eq:gaussian-split-profile-small}, and
$sr\leq\phi(r)$ give
\begin{align}
        -\frac{\phi''}{\phi}
        \geq
        \frac{c\delta_*}{T\phi^2},
        \quad
        \rho^2-(\phi')^2
        \geq
        c\delta_*.
\end{align}
It follows from \eqref{eq:gaussian-split-Ricci-rr-exact} and
\eqref{eq:gaussian-split-F-bound} that
\begin{align}
        \Ric_{\widetilde g}(\partial_r,\partial_r)
        \geq
        \frac{c\delta_*}{T\phi^2}
        -
        \frac{CM^2}{T^2\phi^2},\quad 
        \left|
        \Ric_{\widetilde g}(\partial_r,X)
        \right|
        \leq
        \frac{CM}{T\phi^2}|X|_{h_r},\quad
        \Ric_{\widetilde g}(X,X)
        \geq
        \left(
        \frac{c\delta_*}{\phi^2}
        -
        \frac{CM}{T\phi^2}
        \right)|X|_{h_r}^2.
\end{align}
Let $Y=a\partial_r+X$, where $X$ is tangent to $M_r$. Young's inequality
gives
\begin{align}
        \Ric_{\widetilde g}(Y,Y)
        &\geq
        \frac{c\delta_*}{4T\phi^2}a^2
        +
        \frac{c\delta_*}{4\phi^2}|X|_{h_r}^2
        \geq
        0
\end{align}
provided that
\begin{align}\label{eq:gaussian-split-T-choice}
        T
        \geq
        \frac{CM^2}{\delta_*^2}.
\end{align}
Outside $\cA_T$, we have $F_r=0$ and $u(r)\in\{0,1\}$. Hence
\eqref{eq:gaussian-split-Ricci-rr-exact} reduces to
\begin{align}
        \Ric_{\widetilde g}(\partial_r,\partial_r)
        &=
        -n\frac{\phi''}{\phi}
        \geq
        0,
        \quad
        \Ric_{\widetilde g}(\partial_r,X)
        =
        0,
        \notag\\
        \Ric_{\widetilde g}(X,X)
        &=
        \Ric_{g_{u(r)}}(X,X)
        -
        \left(
        (n-1)(\phi')^2+\phi\phi''
        \right)g_{u(r)}(X,X).
\end{align}
If $u(r)=0$, then
$\Ric_{g_0}=(n-1)g_0$ and $\phi'\leq1$. If $u(r)=1$, then
$\Ric_{\bar g}\geq(n-1)\rho^2\bar g$ and
$\phi'\leq s+\delta_*/4<\rho$. Since $\phi''\leq0$, the tangential
component is nonnegative in either case. We conclude that $\Ric_{\widetilde g}\geq 0.$
\paragraph{$\mathrm{(iii)}$ \textit{Construction of the potential}} Define $\widetilde V$ by
\begin{align}\label{eq:gaussian-split-Vtilde-definition}
        \widetilde V'(r)
        :=
        \alpha\frac{\phi(r)}{\phi'(r)},
        \qquad
        \widetilde V(0):=0.
\end{align}
Since $\phi'\geq s>0$, the function $\widetilde V$ is smooth. Near the
origin, $\phi(r)=r$, and hence
$\widetilde V(r)=\alpha r^2/2$. In the radial direction,
\begin{align}\label{eq:gaussian-split-Vtilde-radial-Hessian}
        \nabla_{\widetilde g}^2\widetilde V(\partial_r,\partial_r)
        =
        \widetilde V''
        =
        \alpha
        \left(
        1-
        \frac{\phi\phi''}{(\phi')^2}
        \right)
        \geq
        \alpha
        >
        1.
\end{align}
For a vector $X$ tangent to a slice,
\begin{align}
        \nabla_{\widetilde g}^2\widetilde V(X,X)
        =
        \widetilde V'\mathrm{II}(X,X).
\end{align}
Using \eqref{eq:gaussian-split-shape-operator},
\eqref{eq:gaussian-split-F-bound}, and $\phi'\geq s$, we obtain
\begin{align}
        \mathrm{II}(X,X)
        \geq
        \left(
        \frac{\phi'}{\phi}
        -
        \frac{CM}{T\phi}
        \right)|X|_{h_r}^2
        \geq
        \left(
        1-
        \frac{CM}{sT}
        \right)
        \frac{\phi'}{\phi}|X|_{h_r}^2.
\end{align}
Therefore,
\begin{align}\label{eq:gaussian-split-Vtilde-tangential-Hessian}
        \nabla_{\widetilde g}^2\widetilde V(X,X)
        \geq
        \alpha
        \left(
        1-
        \frac{CM}{sT}
        \right)|X|_{h_r}^2.
\end{align}
Since $\alpha>1$, we may increase $T$, while preserving
\eqref{eq:gaussian-split-T-choice}, so that
\begin{align}
        \alpha
        \left(
        1-
        \frac{CM}{sT}
        \right)
        \geq
        1.
\end{align}
Since $\widetilde V=\widetilde V(r)$, we have
\begin{align}
        \nabla_{\widetilde g}\widetilde V
        =
        \widetilde V'\partial_r.
\end{align}
Moreover, since $\widetilde g=\ud r^2+h_r$, the curves
$r\mapsto(r,\omega)$ are unit-speed geodesics, and hence
$\nabla_{\partial_r}^{\widetilde g}\partial_r=0$. Therefore, for every
vector $X$ tangent to $M_r$,
\begin{align}
        \nabla_{\widetilde g}^2\widetilde V(\partial_r,X)
        =
        \la
        \nabla_{\partial_r}^{\widetilde g}
        (
        \widetilde V'\partial_r
        ),
        X
        \ra_{\widetilde g}=
        \widetilde V''
        \la\partial_r,X\ra_{\widetilde g}
        +
        \widetilde V'
        \la
        \nabla_{\partial_r}^{\widetilde g}\partial_r,
        X
        \ra_{\widetilde g}=
        0.
\end{align}
Now let $Y=a\partial_r+X$, where $X$ is tangent to $M_r$. Since
$\widetilde g(\partial_r,X)=0$, it follows from
\eqref{eq:gaussian-split-Vtilde-radial-Hessian} and
\eqref{eq:gaussian-split-Vtilde-tangential-Hessian} that
\begin{align}
        \nabla_{\widetilde g}^2\widetilde V(Y,Y)
        =
        a^2
        \nabla_{\widetilde g}^2\widetilde V
        (\partial_r,\partial_r)
        +
        2a
        \nabla_{\widetilde g}^2\widetilde V
        (\partial_r,X)
        +
        \nabla_{\widetilde g}^2\widetilde V(X,X)\geq
        \alpha a^2+|X|_{h_r}^2\geq
        a^2+|X|_{h_r}^2=
        |Y|_{\widetilde g}^2.
\end{align}
Consequently, $\nabla_{\widetilde g}^2\widetilde V \geq \widetilde g.$ By \eqref{eq:gaussian-split-phi-end}, after setting $R=r+b$, we have, outside
a compact set,
\begin{align}\label{eq:gaussian-split-gtilde-end}
        \widetilde g
        =
        \ud R^2+s^2R^2\bar g,
        \qquad
        \widetilde V
        =
        \frac{\alpha R^2}{2}+c_*
\end{align}
for some constant $c_*\in\bbR$.

\paragraph{$\mathrm{(iv)}$ \textit{Shrinking the cap and transferring the test space}} Let
$\Phi_\eta:\bbR^{n+1}\to\bbR^{n+1}$ be a radial dilation map $\Phi_\eta(r,\omega)=(r/\eta,\omega)$. Next, define the rescaled metric $g_\eta :=\eta^2\Phi_\eta^*\widetilde g$ and potential $V_\eta:=\eta^2\Phi_\eta^*\widetilde V.$
The metric $g_\eta$ is smooth and complete. Moreover,
\begin{align}
        \Ric_{g_\eta}
        \geq
        0,\quad 
        \nabla_{g_\eta}^2V_\eta
        =
        \eta^2\Phi_\eta^*
        (
        \nabla_{\widetilde g}^2\widetilde V
        )
        \geq
        \eta^2\Phi_\eta^*\widetilde g
        =
        g_\eta.
\end{align}
This proves \eqref{eq:gaussian-split-smoothed-curvature}. Choose $r_*=e^{4T}$, then $u(r)=1$ and $\phi(r)=s(r+b)$ for every $r\geq r_*$. Then when  $r>\eta r_*$,
\begin{align}
        g_\eta
        =
        \ud r^2+s^2(r+b\eta)^2\bar g,\quad 
        V_\eta
        =
        \frac{\alpha(r+b\eta)^2}{2}+c_\eta,
\end{align}
where $c_\eta:=\eta^2c_*$. Set
\begin{align}
        R_0
        :=
        r_*+b,
        \quad
        K_\eta
        :=
        \overline{B_{\eta r_*}(0)},
\end{align}
where the ball is taken with respect to the Euclidean metric. Define
\begin{align}
        \Psi_\eta:
        (R_0\eta,\infty)\times\bbS^n
        &\rightarrow
        \bbR^{n+1}\setminus K_\eta,
        \\
        \Psi_\eta(R,\omega)
        &:=
        (R-b\eta)\omega.
\end{align}
Then $\Psi_\eta$ is a diffeomorphism. It extends smoothly to
$[R_0\eta,\infty)\times\bbS^n$, and its extension maps
$\{R_0\eta\}\times\bbS^n$ onto $\partial K_\eta$. Finally,
\begin{align}\label{eq:gaussian-split-geta-end}
        \Psi_\eta^*g_\eta
        =
        \ud R^2+s^2R^2\bar g,\quad 
        \Psi_\eta^*V_\eta
        =
        \frac{\alpha R^2}{2}+c_\eta.
\end{align}
We now define the rescaled space of polynomials $E_\eta$. Choose a smooth function $\zeta_\eta:[0,\infty)\to[0,1]$ such that
$\zeta_\eta(R)=0$ for $R\leq\frac32R_0\eta$,
$\zeta_\eta(R)=1$ for $R\geq2R_0\eta$, and
$|\zeta_\eta'(R)|\leq C\eta^{-1}$ for every $R\geq0$. For $p\in E$ and
$(R,\omega)\in(0,\infty)\times\bbS^n$, set
\begin{align}
        p(R,\omega)
        :=
        p(R\omega),
\end{align}
where on the right-hand side $p$ is regarded as an ordinary polynomial
on $\bbR^{n+1}$. Define $T_\eta:E \rightarrow C^\infty(\bbR^{n+1})$ by
\begin{align}\label{eq:gaussian-split-Teta-definition}
        (T_\eta p)(x)
        :=
        \begin{cases}
        \zeta_\eta(R)p(R,\omega),
        &
        \text{if }
        x=\Psi_\eta(R,\omega)
        \in\bbR^{n+1}\setminus K_\eta,\\
        0,
        &
        \text{if }
        x\in K_\eta.
        \end{cases}
\end{align}
Note that since $\zeta_\eta(R)=0$ whenever $R_0\eta
        <
        R
        \leq
        \frac32R_0\eta$, $T_\eta p$ vanishes on an open collar of $\partial K_\eta$ and thus the transition between the two regions in \eqref{eq:gaussian-split-Teta-definition} is smooth. Moreover, $T_\eta p$ has polynomial growth and therefore, $T_\eta p \in W^{1,2}(\bbR^{n+1},g_\eta,e^{-V_\eta}\dvol_{g_\eta}).$ Define, $E_\eta:=T_\eta(E)$. If $T_\eta p=0$, then $p$ vanishes on the
nonempty open set $\{R>2R_0\eta\}$. Since $p$ is an ordinary polynomial,
it follows that $p\equiv0$. Thus $T_\eta:E_C\to E_\eta$ is injective, and
hence $\dim E_\eta = \dim E_C = N_q+1$. 

Let $p\in E_C$. Writing $p$ as a finite sum of homogeneous polynomials implies
\begin{align}\label{eq:gaussian-split-polynomial-tip-bound}
        |p|+|\nabla p|_{g_C}
        \leq
        C_p
        \qquad
        \text{for }0<R\leq1,
\end{align}
where $C_p>0$ is a constant depending on the polynomial $p.$ Using $\dvol_{g_C}=s^nR^n\,\ud R\,\dvol_{\bar g}$ and
$|\zeta_\eta'|\leq C\eta^{-1}$, we obtain
\begin{align}
        \int_{\{R\leq2R_0\eta\}}
        (p^2
        +
        |\nabla p|_{g_C}^2)e^{-V_C}\dvol_{g_C}
        =
        O(\eta^{n+1}),\quad 
        \int_{\{R_0\eta\leq R\leq2R_0\eta\}}
        |\zeta_\eta'|^2p^2e^{-V_C}\dvol_{g_C}
        =
        O(\eta^{n-1}).
\end{align}
The cross term arising from $\nabla(\zeta_\eta p)=\zeta_\eta\nabla p+p\nabla\zeta_\eta$ tends to zero by the Cauchy--Schwarz inequality. Since $n\geq4$, both displayed error terms also tend to zero as $\eta\to 0$. Choose a basis $p_1,\ldots,p_m$ of $E_C$, where $m=\dim E_C$. Define the matrices
\begin{align}
        G_\eta^{ij}
        :=
        e^{c_\eta}
        \int_{\bbR^{n+1}}
        T_\eta p_iT_\eta p_j
        e^{-V_\eta}\dvol_{g_\eta},\quad 
        D_\eta^{ij}
        :=
        e^{c_\eta}
        \int_{\bbR^{n+1}}
        \la\nabla T_\eta p_i,\nabla T_\eta p_j\ra_{g_\eta}
        e^{-V_\eta}\dvol_{g_\eta}.
\end{align}
The preceding estimates and \eqref{eq:gaussian-split-geta-end} imply that as $\eta \to 0$ we have
\begin{align}\label{eq:gaussian-split-L2-matrix-convergence}
        G_\eta^{ij}
        \rightarrow
        G_0^{ij}
        :=
        \int_Cp_ip_j e^{-V_C}\dvol_{g_C},\quad 
        D_\eta^{ij}
        \rightarrow
        D_0^{ij}
        :=
        \int_C
        \la\nabla p_i,\nabla p_j\ra_{g_C}
        e^{-V_C}\dvol_{g_C}.
\end{align}
For $a=(a_1,\ldots,a_m)\in\bbR^m$, set
$p_a:=\sum_{i=1}^ma_ip_i$ and
$p_{a,\eta}:=\sum_{i=1}^ma_iT_\eta p_i$. Since $G_0$ is positive definite,
there exists $c_E>0$ such that
\begin{align}
        \sum_{i,j=1}^mG_0^{ij}a_ia_j
        \geq
        c_E|a|^2
        \qquad
        \text{for every }a\in\bbR^m.
\end{align}
Since there are finitely many matrix entries, the corresponding quadratic
forms converge uniformly on $\bbS^{m-1}$. Moreover, since $G_0$ is
positive definite, $G_\eta$ is positive definite for every sufficiently
small $\eta>0$. Consequently, as $\eta \to 0$ we obtain
\begin{align}
        \sup_{0\neq f\in E_\eta}
        \cR_{g_\eta,V_\eta}(f)
        &=
        \sup_{a\in \bbS^{m-1}}
        \frac{
        \sum_{i,j=1}^mD_\eta^{ij}a_ia_j
        }{
        \sum_{i,j=1}^mG_\eta^{ij}a_ia_j
        }
        \rightarrow
        \sup_{a\in \bbS^{m-1}}
        \frac{
        \sum_{i,j=1}^mD_0^{ij}a_ia_j
        }{
        \sum_{i,j=1}^mG_0^{ij}a_ia_j
        }
        =
        \Lambda_E.
\end{align}
Since $\Lambda_E<q$, the above convergence implies that
\begin{align}
        \sup_{0\neq f\in E_\eta}
        \cR_{g_\eta,V_\eta}(f)
        <
        q
\end{align}
for every sufficiently small $\eta>0$. This proves
\eqref{eq:gaussian-split-smoothed-Rayleigh}.

\paragraph{$\mathrm{(v)}$ \textit{Integrability of the potential}} It remains to prove \eqref{eq:gaussian-split-smoothed-finite-mass}. On the exact conical end, \eqref{eq:gaussian-split-geta-end} gives
\[
        e^{-V_\eta}\dvol_{g_\eta}
        =
        e^{-c_\eta}s^n
        e^{-\alpha R^2/2}R^n
        \,\ud R\,\dvol_{\bar g}.
\]
The right-hand side is integrable. The complement of the conical end is compact, and $V_\eta$ and $g_\eta$ are smooth there. Consequently,
\begin{align}
        \int_{\bbR^{n+1}}
        e^{-V_\eta}\dvol_{g_\eta}
        <
        \infty.
\end{align}
This proves \eqref{eq:gaussian-split-smoothed-finite-mass}.
\end{proof}

\begin{proof}[Proof of Theorem~\ref{thm:gaussian-split-main}]
Let $d\geq5$ and set $n:=d-1\geq4$. From Lemma~\ref{lem:gaussian-split-link} we obtain an integer $q\geq2$, which is the degree of the test function $f_q(z,y)=\Re(z^q)$, a cone scaling
parameter $s$, a Ricci curvature lower-bound parameter $\rho$ satisfying $0<s<\rho<1$, a smooth metric $\bar g$ on $\bbS^n$, and
a smooth family of metrics $\{g_\ta\}_{\ta\in[0,1]}$ joining $g_0=g_\can$ to $g_1=\bar g$. Next, apply Lemma~\ref{lem:gaussian-split-polynomial-cone}. We obtain a coefficient $\alpha>1$ for the quadratic potential, the cone $C=(0,\infty)\times\bbS^n$ equipped with the metric $g_C=\ud r^2+s^2r^2\bar g$ and the potential $V_C=\alpha r^2/2$, and a
vector space $E_C$ of polynomial test functions satisfying $\dim E_C=N_q+1$ and
$\sup_{0\neq p\in E_C}\cR_{g_C,V_C}(p)<q$. Here
$N_q=\binom{n+q}{n+1}$ is the number of eigenvalues of the standard Gaussian space, counted with multiplicity, which are strictly less than $q$.
Consequently,
\begin{align}\label{eq:gaussian-split-final-Gaussian-index}
        \lam_{N_q+1}
        (\bbR^{n+1},|\cdot|,\gamma^{n+1})
        =
        q.
\end{align}
Now apply Lemma~\ref{lem:gaussian-split-smoothing} and fix $\eta>0$ sufficiently
small. Set
\begin{align}
        g
        &:=
        g_\eta,
        \qquad
        V
        :=
        V_\eta,
        \qquad
        Z
        :=
        \int_{\bbR^d}e^{-V}\dvol_g,
        \notag\\
        \mu
        &:=
        Z^{-1}e^{-V}\dvol_g,
        \qquad
        K
        :=
        N_q+1.
\end{align}
Since $d=n+1$, Lemma~\ref{lem:gaussian-split-smoothing} shows that $g$ is a
smooth complete metric on $\bbR^d$, $V$ is smooth, $0<Z<\infty$, and
\begin{align}
        \Ric_g
        &\geq
        0,
        \qquad
        \nabla_g^2V
        \geq
        g,
        \notag\\
        \dim E_\eta
        &=
        K,
        \qquad
        \sup_{0\neq f\in E_\eta}
        \cR_{g,V}(f)
        <
        q.
\end{align}
Since $\mu=e^{-W}\dvol_g$, where $W:=V+\log Z$, we have
\begin{align}
        \Ric_g+\nabla_g^2W
        =
        \Ric_g+\nabla_g^2V
        \geq
        g.
\end{align}
Thus $\mu$ is a probability measure and $(\bbR^d,g,\mu)$ satisfies
$\CD(1,\infty)$. Finally,
\begin{align}
        \lam_K(\bbR^d,g,\mu)
        \leq
        \sup_{0\neq f\in E_\eta}
        \frac{
        \int_{\bbR^d}|\nabla f|_g^2\,\ud\mu
        }{
        \int_{\bbR^d}f^2\,\ud\mu
        }
        =
        \sup_{0\neq f\in E_\eta}
        \cR_{g,V}(f)
        <
        q
        =
        \lam_K(\bbR^d,|\cdot|,\gamma^d).
\end{align}
The first inequality follows from the min--max characterization and
$\dim E_\eta=K$. The second equality follows because the normalization factor
$Z^{-1}$ cancels in the Rayleigh quotient, while the last equality follows
from \eqref{eq:gaussian-split-final-Gaussian-index}, $d=n+1$, and
$K=N_q+1$. This proves \eqref{eqn:index-gaussian-split-K} and completes the
proof.
\end{proof}
\begin{cor}\label{cor:gaussian-split-main}
Let $(\R^d,g,\mu)$ be the weighted manifold constructed in Theorem
\ref{thm:gaussian-split-main}, where
$\mu=Z^{-1}e^{-V}\dvol_g$, $\Ric_g\geq0$, and
$\nabla_g^2V\geq g$. Then there does not exist a $1$-Lipschitz map
\[
        T:(\R^d,|\cdot|,\gamma^d)\to(\R^d,g,\mu)
\]
pushing forward $\gamma^d$ onto $\mu$ up to a finite constant.
\end{cor}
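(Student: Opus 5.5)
The corollary follows by contradiction from Theorem~\ref{thm:contraction} combined with the strict spectral inequality \eqref{eqn:index-gaussian-split-K} in Theorem~\ref{thm:gaussian-split-main}. This is the same argument used for Corollaries~\ref{cor:sphere} and \ref{cor:gaussian}.

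Suppose a $1$-Lipschitz map $T:(\R^d,|\cdot|,\gamma^d)\to(\R^d,g,\mu)$ exists that pushes $\gamma^d$ forward onto $\mu$ up to a finite constant. We first confirm that Theorem~\ref{thm:contraction} applies. Both spaces are complete weighted manifolds: the Gaussian space is standard, and for $(\R^d,g,\mu)$ completeness of $g=g_\eta$ and smoothness of $V=V_\eta$ come from Lemma~\ref{lem:gaussian-split-smoothing}. Moreover, $\mu=e^{-W}\dvol_g$ with $W=V+\log Z$ smooth, and $Z<\infty$ by \eqref{eq:gaussian-split-smoothed-finite-mass}. With $L=1$, Theorem~\ref{thm:contraction} then gives
\begin{align}
        \lambda_k(\R^d,g,\mu)\geq\lambda_k(\R^d,|\cdot|,\gamma^d)\qquad\forall k\geq1.
\end{align}

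Now take $k=K=N_q+1$, the integer from Theorem~\ref{thm:gaussian-split-main}. There, $\lambda_K(\R^d,g,\mu)<q=\lambda_K(\R^d,|\cdot|,\gamma^d)$, which contradicts the inequality above. So no such map $T$ exists.

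The only point requiring care is consistency of the eigenvalue conventions. Theorem~\ref{thm:contraction} is stated for the variational eigenvalues defined in the preliminaries. The bound $\lambda_K(\R^d,g,\mu)<q$ was also obtained via min--max from the $K$-dimensional test space $E_\eta$. It therefore bounds the same variational quantity, and no separate compact-resolvent argument is needed. On the Gaussian side, the value $\lambda_K=q$ is \eqref{eq:gaussian-split-final-Gaussian-index}, which is likewise a variational statement.
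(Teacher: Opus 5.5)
Your proposal is correct and matches the paper's proof: assume such a map exists, apply Theorem~\ref{thm:contraction} with $L=1$ to get $\lambda_k(\R^d,g,\mu)\geq\lambda_k(\R^d,|\cdot|,\gamma^d)$ for all $k$, and contradict \eqref{eqn:index-gaussian-split-K} at $k=K$. Your additional checks (completeness of $g$, finiteness of $Z$, and the consistent use of variational eigenvalues on both sides) are sound and simply spell out what the paper leaves implicit.
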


\begin{proof}
If such a map existed, then Theorem~\ref{thm:contraction} would imply that
for all $k\geq1$ we have
\begin{align}
        \lam_k(\R^d,g,\mu)
        \geq
        \lam_k(\R^d,|\cdot|,\gamma^d).
\end{align}
This contradicts \eqref{eqn:index-gaussian-split-K}.
\end{proof}

\bibliographystyle{alpha}
\bibliography{refs}
\bigskip
\centerline{\scshape Shrey Aryan}
\smallskip
{\footnotesize
 \centerline{Department of Mathematics, Massachusetts Institute of Technology}
\centerline{77 Massachusetts Avenue
Cambridge, MA 02139-4307, USA}
\centerline{\email{shrey183@mit.edu}}
}

\end{document}